\newtheorem{thm}{Theorem}
\newtheorem{lem}{Lemma}
\newcommand{\expect}[1]{\mathbb{E}\left[#1\right]}
\newcommand{\norm}[1]{||{#1}||}
\newcommand{\script}[1]{{{\cal{#1} }}}
\newcommand{\transpose}{\top}
\begin{document}

\title
  {Fast Learning for Renewal Optimization in Online Task Scheduling}
\author{Michael J. Neely \\ University of Southern California\\ \url{https://viterbi-web.usc.edu/~mjneely/}\\%$\vspace{-.4in}$%
\thanks{This work was supported in part by one or more of: NSF CCF 1718477, NSF SpecEES 1824418.} 
}

\markboth{}{Neely}

\maketitle

\begin{abstract} 
This paper  considers online optimization of a renewal-reward system.  A controller performs a sequence of tasks back-to-back.  Each task has a random vector of parameters, called the \emph{task type vector}, that affects the task processing options and also affects the resulting reward and time duration of the task. 
The probability distribution for the task type vector is unknown and the controller must learn to make efficient decisions so that time average reward converges to optimality.  Prior work on such renewal optimization problems leaves open the question of optimal convergence time. This paper develops 
an algorithm with  an optimality gap that decays like $O(1/\sqrt{k})$, where $k$ is the number of tasks processed. The same algorithm is shown to have faster $O(\log(k)/k)$ performance when the system satisfies a strong concavity property.  The proposed algorithm uses an auxiliary variable that is updated according to a classic 
Robbins-Monro iteration. It makes online scheduling decisions at the start of each renewal frame based on this variable and on the observed task type. 
A matching converse is obtained for the strongly concave case by constructing 
an example system for which all algorithms have performance at best 
$\Omega(\log(k)/k)$. A matching $\Omega(1/\sqrt{k})$ converse is also shown for the general case without strong concavity.  
\end{abstract}

\section{Introduction}

Consider a system where a controller performs a sequence of tasks over time (see Fig. \ref{fig:renewal-tasks}). The tasks are performed back-to-back so that when task $k$ ends the task $k+1$ immediately begins. 
The interval of time over which the system performs task $k \in \{0, 1, 2, \ldots\}$ shall be called \emph{frame $k$}.  Fix $m$ as a positive integer. 
At the start of each frame $k \in \{0, 1, 2, \ldots\}$ the controller observes a vector $S[k] \in \mathbb{R}^m$ that determines the \emph{task type}. Components of $S[k]$ may  include parameters that determine the characteristics of
task $k$.    Assume that $\{S[k]\}_{k=0}^{\infty}$ is independent and identically distributed (i.i.d.) over frames with distribution function 
$$F_S(s) = P[S[k]\leq s] \quad \forall s \in \mathbb{R}^m$$  
where vector inequality is taken entrywise. 
The distribution function $F_S(s)$ is not necessarily known to the controller. After observing  $S[k]$, the controller chooses to operate in 
one of various \emph{task processing modes} for the duration of frame $k$.  The available modes can depend on $S[k]$.  The $S[k]$ value and the particular mode that is chosen together determine the  \emph{task duration} $T[k]$  and the \emph{task reward} $R[k]$ for frame $k$.  
For example, $R[k]$ can be the monetary profit earned by completing the task on frame $k$. 
 In a network scheduling scenario, $R[k]$ can be the total amount of data transmitted on frame $k$.  Alternatively, we can have $R[k]=-P[k]$ where $P[k]$ is a power cost incurred on frame $k$. 
 
Every frame $k \in \{0, 1, 2, \ldots\}$ the controller first observes $S[k]$ and then chooses a \emph{decision vector}: 
$$(T[k], R[k]) \in \script{D}(S[k])$$
where $\script{D}(S[k])$ is the set of all possible decision vectors available 
for the task type $S[k]$ (considering all processing modes). 
The total reward per unit time, averaged over the first $K$ tasks, is
\begin{equation} \label{eq:sample-path-intro} 
\frac{\sum_{k=0}^{K-1} R[k]}{\sum_{k=0}^{K-1} T[k]} 
\end{equation} 
The goal is to develop an algorithm for making decisions over frames that 
ensures the limiting average reward per unit time converges to optimality at the fastest possible rate. 
Long term optimality is defined by all possible algorithms, including algorithms that have knowledge of the probability distribution $F_S(s)$.  However, convergence time to optimality is considered for algorithms that have no 
a-priori knowledge of $F_S(s)$.  For fast convergence, algorithms must quickly learn whatever aspects of the distribution are relevant for making intelligent control decisions that maximize average reward. 

This problem is called a \emph{renewal optimization problem} because the system 
state renews itself on each new frame (when a new  $S[k]$ is observed).  For example, consider a stationary and randomized algorithm that, 
on every frame $k$, chooses $(T[k], R[k]) \in \script{D}(S[k])$ independently of the past using a fixed conditional probability distribution given the observed $S[k]$. Then $\{R[k]\}_{k=0}^{\infty}$ and $\{T[k]\}_{k=0}^{\infty}$ are i.i.d. and standard renewal theory implies  (see, for example, \cite{gallager}): 
$$ \lim_{K\rightarrow\infty} \frac{\sum_{k=0}^{K-1} R[k]}{\sum_{k=0}^{K-1} T[k]} = \frac{\expect{R[0]}}{\expect{T[0]}} \quad \mbox{ with prob 1} $$
In principle one could design the best stationary and randomized algorithm \emph{offline}  
by choosing, for each possible realization of $S[k]$, a conditional probability distribution 
for $(T[k], R[k])$ given $S[k]$, to maximize the following ratio of expectations: 
$$  \frac{\expect{R[k]}}{\expect{T[k]}} = \frac{\expect{\: \expect{R[k]|S[k]} \: }}{\expect{\: \expect{T[k]|S[k]} \: }} $$
This offline design would require knowledge of the probability distribution for $S[k]$. The proposed algorithm of this paper is not stationary, and so  $\{R[k]\}_{k=0}^{\infty}$ and $\{T[k]\}_{k=0}^{\infty}$ are not i.i.d. sequences. The proposed algorithm operates online with no a-priori knowledge of the distribution for $S[k]$. It must 
 adapt its decisions by learning from the past.

\subsection{Prior work} 

Optimization of renewal systems is related to linear fractional programming (see, for example, 
\cite{fractional-paper}\cite{boyd-convex}). An offline method for optimizing Markov renewal 
systems via linear fractional programming is in \cite{fox-linear-fractional-mdp}.  
Online methods for renewal optimization are developed in  \cite{renewal-opt-tac}\cite{sno-text}, which treat systems with additional time average
constraints. The work \cite{renewal-opt-tac} 
develops a drift-plus-penalty ratio rule for making decisions that are shown, over time, to satisfy the constraints and achieve a time averaged reward that is arbitrarily close to optimal. The algorithm in \cite{renewal-opt-tac} requires knowledge of the probability distribution for $S[k]$.  An approximate implementation is also given 
in \cite{renewal-opt-tac} that uses a bisection procedure that does not require knowledge of the probability distribution. This method is applied to treat data center scheduling in \cite{xiaohan-datacenter}, asynchronous renewal timelines in \cite{xiaohan-asynch-renewal}, 
power-aware computing in  \cite{neely-power-chapter}, and has connections to the delay-optimal queueing
work in \cite{chih-ping-delay-optimal-priority}.  An alternative Robbins-Monro technique is used in \cite{sno-text} and shown to perform well in simulation, but its convergence time is not anlayzed.   This prior work leaves open the question of optimal convergence time in a renewal system where there is no a-priori knowledge of the 
task type distribution $S[k]$.  That question is resolved in the current paper. 

The algorithm of the current paper is closely related to the classical Robbins-Monro iteration
\cite{robbins-monro}. The work \cite{robbins-monro} treats a 
problem of finding a root $\theta$ to an equation $M(x) =0$ in the case when a nondecreasing 
function $M:\mathbb{R}\rightarrow\mathbb{R}$
is unknown and can only be indirectly evaluated. Specifically, on each iteration $k$ we hand a value $X[k]$ to an oracle, where $X[k]$ represents our best guess of the root at time $k$, and the oracle returns
a random variable $Y[k]$ whose expectation is equal to $M(X[k])$.  That is
\begin{equation} \label{eq:Robbins}
M(x) = \expect{Y[k]|X[k]=x} \quad \forall x \in \mathbb{R}
\end{equation} 
The  estimated root is then updated via the iteration: 
\begin{equation} \label{eq:RM-it} 
X[k+1] = X[k]- \eta[k] Y[k]
\end{equation} 
where $\{\eta[k]\}_{k=0}^{\infty}$ is a sequence of positive stepsizes.   Under certain 
assumptions, the work \cite{robbins-monro} shows that  
$X[k]$  converges in probability to the root $\theta$.  The technique of \cite{robbins-monro} 
inspired the field of \emph{stochastic approximation} and has been extended to treat minimization of convex functions $M:\mathbb{R}^n\rightarrow\mathbb{R}$ when an oracle returns stochastic gradients or subgradients 
\cite{stochastic-approx-book}\cite{SGD-averaging}\cite{SGD-robust}\cite{kushner-stochastic-approx}\cite{borkar-book}. Modifications of Robbins-Monro type iterations are considered 
in \cite{proximal-robbins-monro}\cite{SGD-linear-models}, improvements for binary data are in 
\cite{robbins-monro-binary}, and applications to 
Bayesian inference is explored in \cite{robbins-monro-Bayesian}.  If $M(x)$ is 
convex then, under a carefully chosen sequence of stepsizes, 
the optimality gap decays like $O(1/\sqrt{k})$, where $k$ is the number of iterations, while if $M(x)$ is \emph{strongly} convex then the optimality gap decays like $O(1/k)$. Converse results in \cite{stochastic-approx-book} show these convergence rates cannot be improved.  However, an example in \cite{SGD-robust} shows convergence is sensitive to choosing the stepsize based on  knowledge of the strong convexity parameter. When this parameter is over-estimated the convergence can be 
as slow as $\Omega(1/k^{1/5})$. This is used to motivate robust methods in \cite{SGD-robust}.

The current paper uses an iteration similar
to \eqref{eq:RM-it}. However, renewal optimization problems  
have a different structure from the 
stochastic approximation problems described in the previous paragraph. 
For example, optimality for the current paper is related to maximizing a ratio of expectations 
$$ \frac{\expect{R[k]}}{\expect{T[k]}}$$ 
which is different from minimizing the single expectation that defines $M(x)$ in \eqref{eq:Robbins}. 
Furthermore, the expectations $\expect{R[k]}$ and $\expect{T[k]}$ are not determined by a single input parameter $x$ but by a family of conditional distributions
for choosing $(T[k], R[k])$ given $S[k]$.  
The structure of the randomness is also different: In stochastic approximation we choose a  input
vector $X[k]$ and an oracle gives us a noisy version of $M(X[k])$, whereas in the current paper the system gives us a random ``input'' state $S[k]$ from which we choose a (fully known) output $(T[k], R[k]) \in \script{D}(S[k])$.  The renewal problem is \emph{online} and all decisions (starting from frame $0$) are important in creating time averages that are close to optimal.  This is different from problems that seek a single vector $x$ and do not care what decisions are made in the past as long as they lead to a good eventual choice of $x$. Finally, there are no convexity assumptions on the sets $\script{D}(s)$.  Nevertheless, a connection to Robbins-Monro techniques is made in \cite{renewal-opt-tac} where it is shown that
the optimal time average reward $\theta^*$ is the root of the following $M(\theta)$ function
$$ M(\theta) = \expect{\sup_{(T[k], R[k]) \in \script{D}(S[k])}\left\{ R[k] - \theta T[k]\right\}}$$
where the expectation is with respect to the random $S[k]$ (that has an unknown distribution). 

The current paper uses this observation to motivate a Robbins-Monro style iteration for 
finding the root $\theta^*$ using an \emph{auxiliary variable} $\theta[k]$ on each frame $k$. However, it is not enough to simply find a value $\theta$ that is close to $\theta^*$.  That is because our goal is to obtain an online algorithm with a time average (starting from frame $0$) that converges to $\theta^*$ as quickly as possible. Fast learning is crucial because early mistakes  are included in the time average
that we are trying to optimize.    The fundamental convergence times for renewal systems that are established in this paper are different from the fundamental convergence times for stochastic approximation  in \cite{stochastic-approx-book}.  

The focus on convergence time in this paper is conceptually similar to 
analysis of regret in  multi-armed bandit systems, see for example \cite{bubeck2012regret}\cite{adversarial-bandit}\cite{auer-confidence-bandit}  and a recent renewal-based bandit formulation in 
\cite{atilla-renewal-2019}.  Such problems have an ``exploration versus exploitation'' structure that is different from the structure of the current paper.   In \cite{atilla-renewal-2019}, one of multiple ``bandit-style'' arms is pulled, each arm giving a reward after a random renewal-time. 
The goal is to learn the best arm to pull.   In contrast, rather than choosing an unknown arm and receiving a reward, our system receives a random task state $S[k]$ and chooses  
one of multiple (known) control actions $(T[k], R[k]) \in \script{D}(S[k])$ in a way that most quickly optimizes the system. 

%We will show that, regardless of system structure, the mean square error between $\theta[k]$ and $\theta^*$ can decay like $O(1/k)$, whereas convergence of the online averages is fundamentally slower.  

\subsection{Our contributions} 

This paper develops an online algorithm for making decisions $(T[k], R[k]) \in \script{D}(S[k])$ on each frame $k$, without knowing the distribution for $S[k]$,  that ensures time average reward converges to the optimal value $\theta^*$ with probability 1.  The algorithm uses an auxiliary variable $\theta[k]$ that is updated according to a Robbins-Monro type iteration. However, the rate at which $\theta[k]$ converges to $\theta^*$ is faster than the rate at which the online averages converge to $\theta^*$.  Specifically, we show: 
\begin{itemize} 
\item Under general system structure, the proposed algorithm has an optimality gap
that decays like $O(1/\sqrt{k})$.

\item Under special ``strong concavity'' system structure, the proposed algorithm ensures an improved $O(\log(k)/k)$ performance.   The algorithm is robust in the sense that it 
does not require knowledge of the strong concavity parameter.  However, 
it turns out that knowledge of a minimum frame size parameter $T_{min}$ is 
crucial for selecting the stepsize. Fortunately $T_{min}$ is easily known in most practical systems. 

\item Regardless of whether or not the system exhibits strong concavity, the mean squared error between $\theta[k]$ and $\theta^*$ decays like $O(1/k)$.  It is remarkable that convergence of this auxiliary variable is independent of strong concavity and is fundamentally different from convergence of the online time averages (which do depend on strong concavity structure). 

\item We present a matching $\Omega(\log(k)/k)$ converse result for systems with strongly 
concave structure. This is done by constructing an example system for which no algorithm can achieve faster convergence.  The proof utilizes a Bernoulli estimation theorem from \cite{hazan-kale-stochastic} and a recent mapping technique in \cite{neely-NUM-converse-infocom2020}.
 
\item We also present a matching $\Omega(1/\sqrt{k})$ converse for an example system without the strongly concave property.  The proof has a different structure from the strongly concave case: It shows that for a given $\epsilon>0$ and for any algorithm operating on the system, there is a probability parameter for $S[k]$ under which the algorithm needs at least $\Omega(1/\epsilon^2)$ frames to ensure performance is within $\epsilon$ of optimal.  This result is similar in spirit to a known square-root converse result for pseudo-regret in 
multi-arm bandit problems in  \cite{bubeck2012regret}.  However,  the square root law does not arise for the same reason as in \cite{bubeck2012regret}. Indeed, the two problems are structurally different and use different proofs.  
\end{itemize}

\section{Examples}  \label{section:example}

\subsection{Cloud computing with two choices} 

\begin{figure}[htbp]
   \centering
   \includegraphics[width=6in]{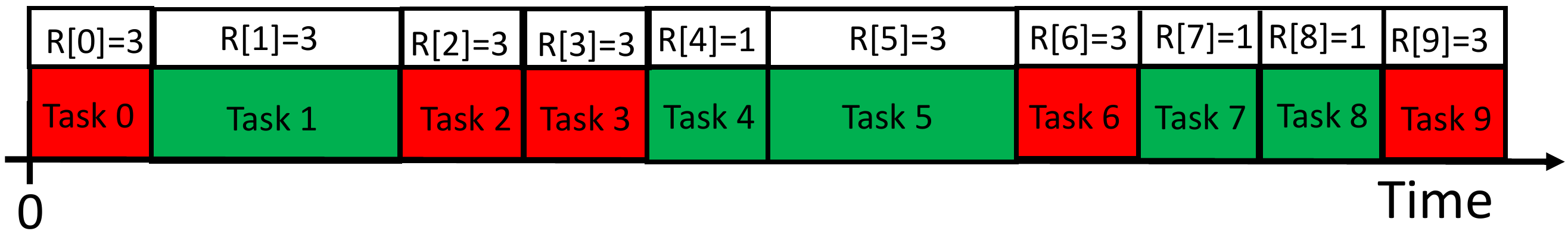} % requires the graphicx package
   \caption{A sequence of back-to-back tasks of different types:  Red tasks ($S[k]=0$) have  $(T[k], R[k])=(1,3)$.  Green tasks ($S[k]=1$) have   $(T[k], R[k]) \in \{(1, 1), (2, 3)\}$.}
   \label{fig:renewal-tasks}
\end{figure}

Suppose a cloud computing device performs back-to-back tasks. Each task is one of two types, shown as red or green in Fig. \ref{fig:renewal-tasks}. 
Let $S[k] \in \{0,1\}$ be the type of task $k$ and assume $\{S[k]\}_{k=0}^{\infty}$ is i.i.d. with $P[S[k]=1]=p$.  
Red tasks always take 1 unit of time and earn 3 units of revenue.  Green 
tasks earn revenue depending on the \emph{quality} of the processing, and there are only two processing modes available. The decision sets are described in the table below:

\begin{table}[h] 
\begin{tabular}{|c|c|c|} 
\hline
\mbox{Task type} & \mbox{color} & \mbox{Decision set}  \\ \hline \hline
$S[k]=0$ & red & $\script{D}(0) = \{(1,3)\}$\\ \hline
$S[k]=1$ & green & $\script{D}(1) = \{(2,3), (1,1)\}$ \\ \hline
\end{tabular} 
\end{table}

If $S[k]=1$ we can choose either high quality processing (which takes 2 units of time and brings revenue 3) or low quality processing (which takes 1 unit of time and brings revenue 1). Which should we choose?  Since $\frac{3}{2} > \frac{1}{1}$, a naive guess is that it is always optimal to choose high quality.   This is not true: It depends on the value of $p$. This is because Type 0 tasks are more valuable than Type 1 tasks, so it may be better to quickly get a Type 1 task over with in hopes that the next task is Type 0. 
It can be shown that it is best to always choose low quality if $0\leq p < 1/2$, and to always choose high quality if $1/2\leq p \leq 1$.  If $p$ is unknown, 
an intuitively good online control algorithm  
is to form a running estimate $\hat{p}[k] = \frac{1}{k}\sum_{j=0}^{k-1} 1_{\{S[j]=1\}}$ and choose high quality whenever $\hat{p}[k]\geq 1/2$.  Unfortunately this method can make mistakes if $p \approx 1/2$.  Section \ref{section:square-root} shows that no causal algorithm that does not have a-priori knowledge of $p$ can optimize this system faster than a  square root law.

\subsection{Infinitely many choices} 

Now suppose the time $T[k]$ spent on each task can be chosen as any real number in the interval 
$[T_{min}, T_{max}]$, where $T_{min}$ and $T_{max}$ are some given positive constants.  Suppose the 
corresponding revenue is: 
$$ R[k] = A[k]f(T[k]B[k], C[k])$$ 
where $f:\mathbb{R}^2\rightarrow \mathbb{R}$ is some function (possibly discontinuous and nonconvex) that is nondecreasing in the first coordinate. Thus, revenue can be larger if more time in spent on the task.  The peculiarities of each task are described by the random vector $S[k]=(A[k], B[k], C[k])$, which has an unknown joint cumulative distribution $F_{A,B,C}(a,b,c)$.  Every frame $k$ the controller observes $S[k]$ and chooses $T[k] \in [T_{min}, T_{max}]$.

\subsection{Project selection} \label{section:selection} 

Suppose Alice  works on one project at a time. 
On each frame $k$, Alice receives 
a random number $N[k]$ of new potential projects, each with a different profit and 
time commitment.  Suppose $N[k]$ has some unknown probability mass function (for example, $N[k]$ might be a Poisson random variable 
with some unknown parameter $\lambda$).  
Let $(T_j[k], R_j[k])$ represent the time and profit characteristics for each project $j \in \{1, \ldots, N[k]\}$. The joint distribution for these random
parameters is arbitrary and unknown. 
At the start of frame $k$, Alice chooses 
which single project $j \in \{1, \ldots, N[k]\}$ to work on.  She can also 
choose to work on nothing for one time unit, which yields $(T[k],R[k]) = (1,0)$.
This is useful if she believes that  none of the current project options are desirable.  For example, if there is only one project option and it requires a large time commitment but brings only a small profit, she might reject this project 
in favor of waiting for a new batch of options.   If $N[k]=0$ then there are no options and so $(T[k], R[k])=(1,0)$.

The task type $S[k]$ formally contains all $N[k]$ and $(T_j[k], R_j[k])$ parameters, and the decision set 
is:\footnote{There is no loss of generality in assuming $S[k] \in \mathbb{R}^m$ for some positive integer $m$. 
That is because we can formally pack an infinite sequence of task parameters into a single real number by organizing the digits of its infinite decimal expansion.} 
$$ \script{D}(S[k]) = \{(1,0), (T_1[k], R_1[k]), \ldots, (T_{N[k]}[k], \ldots, R_{N[k]}[k])\}$$
How do we know if a particular option, say $(10.7, 1.2)$, is good or bad? Can we learn to make good project selection decisions?  How fast can we learn?

\section{Formulation} 

As described in the introduction, the task type sequence $\{S[k]\}_{k=0}^{\infty}$ is i.i.d. over frames.  Every frame $k \in \{0, 1,2, \ldots\}$ the controller observes $S[k]$ and chooses $(T[k], R[k]) \in \script{D}(S[k])$, where $\script{D}(S[k])$ is a known set of options for $(T[k], R[k])$ that are available  under  $S[k]$. 

\subsection{Structural assumptions} \label{section:structure-assumptions} 

Assume $\{S[k]\}_{k=0}^{\infty}$ is an i.i.d. sequence of random vectors that take values in a set $\Omega_S\subseteq\mathbb{R}^m$. For each $s \in \Omega_S$ the decision set $\script{D}(s)$ is assumed to be a compact subset of $\mathbb{R}^2$ that satisfies
$$ \script{D}(s) \subseteq (0, \infty) \times (-\infty, \infty)$$
This ensures all vectors $(t,r) \in \script{D}(s)$ have $t>0$ (so frame sizes are  positive). 
It is assumed that on each slot $k$ the decisions $(T[k], R[k]) \in \script{D}(S[k])$ 
are made according to some probability law that ensures $T[k]$ and $R[k]$ are both random variables, 
meaning they are \emph{probabilistically measurable}.\footnote{The assumption that $T[k]$ and $R[k]$ are probabilistically measurable is mild and  precludes using the Axiom of Choice to make  nonmeasurable decisions.}  The probability law can be different for each frame and can depend on
 observations from previous frames. 

It is useful to ensure bounds on the first and second moments of $T[k]$ and $R[k]$ that
hold regardless of the decisions. 
For this we 
assume existence of  lower bound and upper bound  functions $L:\Omega_S\rightarrow [0, \infty)$ and  $U:\Omega_S \rightarrow [0, \infty)$ that satisfy 
\begin{align}
&\inf_{(t,r) \in \script{D}(s)} \{t\} \geq L(s) \quad \forall s \in \Omega_S \label{eq:bounding1} \\
&\sup_{(t,r) \in \script{D}(s)}\{ t^2+r^2\} \leq U(s) \quad \forall s \in \Omega_S \label{eq:bounding2} 
\end{align}
and for which $L(S[0])$ and $U(S[0])$ are random variables with expectations that 
satisfy 
\begin{align}
\expect{L(S[0])}&>0 \label{eq:bounding3} \\
\expect{U(S[0])} &<\infty \label{eq:bounding4} 
\end{align}
An example when such bounding functions exist is when  $\script{D}(s) \subseteq [t_1, t_2] \times [r_1, r_2]$ for all $s \in \Omega_S$, where $t_1, t_2$ and $r_1, r_2$ are some constants that satisfy $0<t_1\leq t_2$ and $r_1\leq r_2$.  In that case we have constant functions $L(s) = t_1$ and $U(s) = t_2^2+\max\{r_1^2,r_2^2\}$ for all $s \in \Omega_S$. 

From \eqref{eq:bounding1}-\eqref{eq:bounding4} it follows that regardless of the decisions we have 
\begin{align*}
T[k] &\geq L(S[k])  \quad \forall k \in \{0, 1, 2, \ldots\}  \\
T[k]^2 + R[k]^2 &\leq U(S[k])   \quad \forall k \in \{0, 1, 2, \ldots\}
\end{align*}
and there are constants $T_{min}$, $T_{max}$, $R_{min}$, $R_{max}$, $C_1$, $C_2$ 
such that for all $k \in \{0, 1, 2, \ldots\}$: 
\begin{align}
&T_{min}>0 \label{eq:moment1} \\
&T_{min} \leq \expect{T[k]}\leq T_{max}\label{eq:moment2} \\
&R_{min} \leq \expect{R[k]} \leq R_{max} \label{eq:moment3} \\
&\expect{T[k]^2} \leq C_1\label{eq:moment4} \\
&\expect{R[k]^2} \leq C_2 \label{eq:moment5}
\end{align}
It is assumed the controller knows the values of $T_{min}, T_{max}, R_{min}, R_{max}$.

\subsection{Optimization goal} 

The goal is to choose decision vectors over time  to solve: 
\begin{align} 
\mbox{Maximize:} \quad & \liminf_{K\rightarrow\infty} \frac{\sum_{k=0}^{K-1}\expect{R[k]}}{\sum_{k=0}^{K-1} \expect{T[k]}}\label{eq:goal1} \\
\mbox{Subject to:} \quad & (T[k],R[k]) \in \script{D}(S[k]) \quad \forall k \in \{0, 1, 2, \ldots\} \label{eq:goal2} 
\end{align} 
The objective  in \eqref{eq:goal1} considers a ratio of expectations, similar to the renewal optimization problems considered in 
\cite{renewal-opt-tac}\cite{sno-text}. Define $\theta^*$ as the supremum value of the objective function \eqref{eq:goal1} over all algorithms that satisfy the constraints 
\eqref{eq:goal2}. The value $\theta^*$ considers  
all algorithms that result in measurable decision vectors, 
including algorithms that know, starting from time $0$, the full distribution function $F_S(s)$ and all future values $\{S[k]\}_{k=0}^{\infty}$.  
The key result of this paper is to establish the fundamental \emph{convergence time} required to approach a value close to 
$\theta^*$ under the more practical class of algorithms that are \emph{causal} (so they have no knowledge of the future) and \emph{statistics unaware} (so they have no a-priori knowledge of the 
distribution $F_S(s)$).

\subsection{Characterizing optimality}

This subsection summarizes key facts from \cite{renewal-opt-tac}. Let $\script{A}\subseteq [T_{min}, T_{max}] \times [R_{min}, R_{max}]$ 
be the set of all 1-shot expectations $(\expect{T[0]}, \expect{R[0]})$ achievable on frame $0$, considering all possible conditional probability distributions for choosing $(T[0], R[0]) \in \script{D}(S[0])$ given the observed task type $S[0]$.  It can be shown that $\script{A}$ is a convex set. Because 
$\{S[k]\}_{k=0}^{\infty}$ is i.i.d. over frames, the set of expectations achievable on slot $0$ is the same as the set of expectations achievable on any slot $k$. Thus, under any algorithm for making probabilistically measurable decisions over frames we have 
$$ (\expect{T[k]}, \expect{R[k]}) \in \script{A} \quad \forall k \in \{0, 1, 2, \ldots\} $$
and since a convex combination of vectors in the convex set $\script{A}$ must also be 
in $\script{A}$, we have 
$$ \frac{1}{K}\sum_{k=0}^{K-1} (\expect{T[k]}, \expect{R[k]})  \in \script{A} \quad \forall K \in \{1, 2, 3, \ldots\}$$
Let $\overline{\script{A}}$ denote the closure of set $\script{A}$.  Recall that all points $(t,r) \in \overline{\script{A}}$ have $t\geq T_{min}>0$.  It can be shown that the supremum objective $\theta^*$ for problem \eqref{eq:goal1}-\eqref{eq:goal2}  is achievable by a particular (possibly non-stationary) 
algorithm for making decisions over frames  and satisfies:
\begin{equation}\label{eq:theta-star}
 \theta^*= \sup_{(t,r) \in \overline{\script{A}}} \left\{\frac{r}{t}   \right\}
\end{equation} 
The supremum on the right-hand-side of \eqref{eq:theta-star} 
is achievable because $r/t$ is a continuous function over the compact set $\overline{\script{A}}$. 
In particular, there exists a (possibly non-unique) optimal point $(t^*, r^*) \in \overline{\script{A}}$ such that 
\begin{equation*} 
\theta^* = \frac{r^*}{t^*}
\end{equation*} 

\begin{lem} \label{lem:y-theta-zero}  (From \cite{renewal-opt-tac}) Let $\theta^*$ be the optimal ratio in \eqref{eq:theta-star}. Then
\begin{equation} \label{eq:sup-is-zero}
\sup_{(t,r) \in \overline{\script{A}}} \{r-\theta^*t \} = 0
\end{equation} 
\end{lem}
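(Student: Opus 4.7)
The plan is to prove the equality by showing both inequalities: the supremum is at least $0$, and at most $0$.

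First, I would establish that the supremum is at least $0$ by exhibiting a specific feasible point. By the discussion immediately preceding the lemma, there exists $(t^*, r^*) \in \overline{\script{A}}$ such that $\theta^* = r^*/t^*$. Since $t^* \geq T_{min} > 0$, this rearranges to $r^* - \theta^* t^* = 0$, so the supremum in \eqref{eq:sup-is-zero} is at least $0$.

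Second, I would establish that the supremum is at most $0$. Take any $(t,r) \in \overline{\script{A}}$. By the remark after \eqref{eq:theta-star} all points in $\overline{\script{A}}$ satisfy $t \geq T_{min} > 0$, so $r/t$ is well-defined and the definition of $\theta^*$ in \eqref{eq:theta-star} gives $r/t \leq \theta^*$. Multiplying both sides by the positive number $t$ yields $r \leq \theta^* t$, i.e., $r - \theta^* t \leq 0$. Taking the supremum over all $(t,r) \in \overline{\script{A}}$ gives an upper bound of $0$.

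The two inequalities combine to give \eqref{eq:sup-is-zero}. There is no real obstacle here beyond the preliminary fact that the supremum $\theta^*$ in \eqref{eq:theta-star} is actually attained at some $(t^*, r^*)$ in the compact set $\overline{\script{A}}$; that attainment is guaranteed by continuity of $(t,r) \mapsto r/t$ on $\overline{\script{A}}$ (which is valid because $t \geq T_{min} > 0$ everywhere on $\overline{\script{A}}$) together with compactness of $\overline{\script{A}}$, both of which are asserted in the text preceding the lemma. The positivity $t \geq T_{min} > 0$ is what lets us freely convert between the fractional inequality $r/t \leq \theta^*$ and the linear inequality $r - \theta^* t \leq 0$ without sign issues, which is the standard pivot used in linear fractional programming and is exactly what enables the Robbins--Monro style iteration on $\theta[k]$ described in the introduction.
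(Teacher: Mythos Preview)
Your proof is correct. The paper does not actually prove this lemma: it is stated as a citation from \cite{renewal-opt-tac} with no argument given, so there is no ``paper's own proof'' to compare against. Your two-sided argument (attainment at $(t^*,r^*)$ gives $\geq 0$; the defining property of $\theta^*$ together with $t\geq T_{min}>0$ gives $\leq 0$) is exactly the standard linear-fractional pivot and is the natural way to establish the result.
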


\section{An iterative algorithm} 

Here we develop an algorithm that is causal and that does not know the distribution function $F_S(s)$. 
Assume the values $T_{min}$, $T_{max}$, $R_{min}$, $R_{max}$ are 
known to the controller. Let $\theta_{min}$ and $\theta_{max}$ be finite and known values that satisfy:
\begin{equation} \label{eq:theta-bracket} 
\theta_{min} \leq\theta^* \leq \theta_{max}
 \end{equation} 
For example, the following values for $\theta_{min}$ and $\theta_{max}$ can be used: 
\begin{align*}
\theta_{min} &= \min\left\{\frac{R_{min}}{T_{min}}, \frac{R_{min}}{T_{max}}\right\}\\
\theta_{max} &= \max\left\{\frac{R_{max}}{T_{min}}, \frac{R_{max}}{T_{max}}\right\}
\end{align*}
Values of $\theta_{min}$ and $\theta_{max}$ that more tightly bracket $\theta^*$ can be used
if the structure of the system allows such tighter values  
to be known. 
The following algorithm introduces a sequence of \emph{estimates} $\{\theta[k]\}_{k=0}^{\infty}$ of   $\theta^*$ as follows: Initialize $\theta[0] \in [\theta_{min}, \theta_{max}]$ as an arbitrary deterministic value.  
On each frame $k \in \{0, 1, 2, \ldots\}$ do: 
\begin{itemize} 
\item Observe $S[k] \in \Omega_S$ and the current $\theta[k]$ value. Choose $(T[k], R[k])$ to solve:
\begin{align} 
\mbox{Maximize:} \quad & R[k]-\theta[k]T[k] \label{eq:alg1a} \\
\mbox{Subject to:} \quad & (T[k], R[k]) \in \script{D}(S[k]) \label{eq:alg1b} 
 \end{align} 
breaking ties arbitrarily.\footnote{As a minor detail, we note that the tiebreaking rule must conform to a particular probability law that yields probabilistically measurable $(T[k], R[k])$ variables.   An example tiebreaking rule is to choose, among all vectors $(t, r) \in \script{D}(S[k])$ that tie, the vector with the smallest $t$-coordinate.}  There is at least one maximizer because the set 
$\script{D}(S[k])$ is compact. 
\item Update $\theta[k]$ via the iteration: 
\begin{equation} \label{eq:theta-update} 
\theta[k+1] = \left[ \theta[k] + \eta[k](R[k]-\theta[k]T[k])\right]_{\theta_{min}}^{\theta_{max}} 
\end{equation} 
where $\{\eta[k]\}_{k=0}^{\infty}$ is a deterministic 
sequence of \emph{step sizes} to be chosen later;  $[x]_{\theta_{min}}^{\theta_{max}}$ denotes a projection of real number $x$ onto the interval $[\theta_{min}, \theta_{max}]$. 
\end{itemize} 

The update equation \eqref{eq:theta-update} is inspired by the classic 
Robbins-Monro iteration in \eqref{eq:RM-it}. 

\subsection{Technical lemma} \label{section:tech-intro}

\begin{lem} \label{lem:tech-intro}  Consider algorithm \eqref{eq:alg1a}-\eqref{eq:theta-update} with any 
$\theta[0] \in [\theta_{min}, \theta_{max}]$ and any positive stepsizes $\{\eta[k]\}_{k=0}^{\infty}$. 
For each $k \in \{0, 1, 2, \ldots\}$ and for any versions of $\expect{T[k]|\theta[k]}$ and $\expect{R[k]|\theta[k]}$, the following holds with probability 1:\footnote{Recall that if $X$ and $Y$ are two random variables with $\expect{|X|}<\infty$ then: (i) There can be multiple \emph{versions} of the conditional expectation $\expect{X|Y}$; (ii) Each version is a random variable that is a deterministic function of $Y$; (iii) Any two versions $\phi(Y)$ and $\psi(Y)$ satisfy $P[\phi(Y)=\psi(Y)]=1$. }
\begin{equation} \label{eq:intro-inclusion} 
 \left(\expect{T[k]|\theta[k]}, \expect{R[k]|\theta[k]}\right) \in \overline{\script{A}}  
 \end{equation} 
That is, \eqref{eq:intro-inclusion}  holds for \emph{almost all realizations of $\theta[k]$}.   Further, there are versions of   $\expect{T[k]|\theta[k]}$ and $\expect{R[k]|\theta[k]}$ under which \eqref{eq:intro-inclusion}  holds \emph{surely for all realizations} of $\theta[k]$. 
\end{lem}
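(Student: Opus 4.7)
The plan is to exploit the fact that $\theta[k]$ depends only on the history through the end of frame $k-1$, while $S[k]$ is i.i.d.\ and hence independent of that history. Once this independence is in hand, conditioning on $\theta[k] = \theta$ reduces the frame-$k$ decision to a specific conditional distribution for $(T[k], R[k])$ given $S[k]$, so the resulting expectation pair lies in $\script{A}$ by the very definition of that set.

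First I would establish by induction that $\theta[k]$ is measurable with respect to $\mathcal{F}_{k-1} := \sigma(S[0], \ldots, S[k-1], T[0], \ldots, T[k-1], R[0], \ldots, R[k-1])$. The base case $\theta[0]$ is deterministic, and the induction step follows from the update \eqref{eq:theta-update}. Since $S[k]$ is independent of $\mathcal{F}_{k-1}$ by the i.i.d.\ assumption on $\{S[j]\}$, $S[k]$ is also independent of $\theta[k]$ (absorbing, if necessary, any independent external randomization used for tiebreaking into the construction below).

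Second, I would package the decision rule as an explicit measurable map. For each $\theta \in [\theta_{min}, \theta_{max}]$, let $g_\theta : \Omega_S \to \mathbb{R}^2$ be the function that returns the choice of $(T, R) \in \script{D}(s)$ produced by \eqref{eq:alg1a}-\eqref{eq:alg1b} under the fixed tiebreaking rule described in the footnote; measurability of $(s,\theta) \mapsto g_\theta(s)$ is exactly what that rule buys us. Then $(T[k], R[k]) = g_{\theta[k]}(S[k])$. Define the deterministic function
\begin{equation*}
\Phi(\theta) := \expect{g_\theta(S[0])},
\end{equation*}
where the expectation is taken with respect to the fixed distribution $F_S$. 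By the definition of $\script{A}$, the conditional distribution of $g_\theta(S[0])$ given $S[0]$ is a legitimate one-shot decision rule, so $\Phi(\theta) \in \script{A}$ for every $\theta \in [\theta_{min}, \theta_{max}]$.

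Third, I would verify that $\Phi(\theta[k])$ is a valid version of $\bigl(\expect{T[k]|\theta[k]}, \expect{R[k]|\theta[k]}\bigr)$. Because $S[k]$ is independent of $\theta[k]$ and distributed like $S[0]$, a Fubini/disintegration step gives, for any bounded measurable $h:\mathbb{R}^2\to\mathbb{R}$,
\begin{equation*}
\expect{h(T[k], R[k]) \mid \theta[k] = \theta} = \expect{h(g_\theta(S[k]))} = \expect{h(g_\theta(S[0]))}.
\end{equation*}
Applying this with $h(t,r)=t$ and $h(t,r)=r$ identifies $\Phi(\theta[k])$ as a version of the conditional expectation, and since $\Phi(\theta) \in \script{A} \subseteq \overline{\script{A}}$ for every $\theta$ in the projection interval, the inclusion \eqref{eq:intro-inclusion} holds \emph{surely} under this version. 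Any other version agrees with $\Phi(\theta[k])$ with probability one, yielding the almost-sure statement for arbitrary versions.

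The main obstacle is the measurability bookkeeping around the argmax in \eqref{eq:alg1a}-\eqref{eq:alg1b}: without a prescribed tiebreaking rule, the map $(s,\theta) \mapsto g_\theta(s)$ need not even be well-defined, let alone jointly measurable. This is precisely why the algorithm's footnote specifies a concrete rule (e.g.\ smallest $t$-coordinate among maximizers), and once that rule is fixed, a measurable selection argument (or direct inspection when $\script{D}(s)$ is finite, as in the examples) makes $\Phi$ a bona fide deterministic function and the rest of the argument is routine.
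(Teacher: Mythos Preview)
Your proof is correct and takes a genuinely different route from the paper. The paper defers to a general result (Theorem~\ref{thm:measure-theory} in the Appendix), which is proven by contradiction: assuming $P[Z\in\overline{\script{A}}]<1$ for a version $Z$ of the conditional expectation, it uses the fact that the compact convex set $\overline{\script{A}}$ is a countable intersection of closed half-spaces to find a violated half-space constraint, then manufactures an alternative decision $\tilde{X}$ whose unconditional expectation lies outside $\script{A}$, contradicting the definition of $\script{A}$. Your argument is instead constructive: you build the explicit function $\Phi(\theta)=\expect{g_\theta(S[0])}$, observe that for each fixed $\theta$ the map $s\mapsto g_\theta(s)$ is itself an admissible one-shot rule so $\Phi(\theta)\in\script{A}$, and then use independence of $S[k]$ and $\theta[k]$ plus Fubini to identify $\Phi(\theta[k])$ as a version of the conditional expectation. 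This is more elementary and even yields the slightly sharper conclusion $\Phi(\theta)\in\script{A}$ (not merely $\overline{\script{A}}$); the price is that it leans on the decision being a deterministic measurable function of $(\theta[k],S[k])$, whereas the paper's hyperplane argument is stated in enough generality to cover external randomization $U$ as well. Your acknowledgment of the joint-measurability obstacle and appeal to the tiebreaking footnote is appropriate given the paper's own level of rigor on that point.
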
 

\begin{proof} 
This is a special case of a more general result developed in the Appendix. 
\end{proof} 

Intuitively, the above lemma holds because $\{S[k]\}_{k=0}^{\infty}$ is i.i.d. over frames.  Since $\theta[k]$ depends only on 
$\{S[0], \ldots, S[k-1]\}$,  the random variables $\theta[k]$ and $S[k]$ are independent.  
Since the decision set $\script{D}(S[k])$ depends only on $S[k]$, knowing $\theta[k]$ does not change the set of expectations that can be achieved on frame $k$. For the rest of this paper  $\expect{T[k]|\theta[k]}$ and $\expect{R[k]|\theta[k]}$ shall represent any particular versions of the conditional expectations. 

\subsection{Analysis of the $(T[k], R[k])$ decision} 

The following lemmas use $\theta^*$ as the optimal ratio in \eqref{eq:theta-star} and assume
$(t^*, r^*)$ is a vector in $\overline{\script{A}}$ that satisfies $\theta^*=r^*/t^*$.  

\begin{lem} \label{lem1} Consider the algorithm \eqref{eq:alg1a}-\eqref{eq:theta-update} with any 
$\theta[0] \in [\theta_{min}, \theta_{max}]$ and any positive stepsizes $\{\eta[k]\}_{k=0}^{\infty}$.  For each $k \in \{0, 1, 2, \ldots\}$ we have for \emph{almost all realizations} of $\theta[k]$:
\begin{equation} \label{eq:R-theta-bound} 
\expect{R[k] - \theta[k]T[k]|\theta[k]}\geq t^*(\theta^*-\theta[k]) 
\end{equation} 
and 
\begin{equation} \label{eq:R-theta-bound2} 
\expect{R[k]-\theta^*T[k]|\theta[k]} \geq (\theta[k]-\theta^*)\expect{(T[k]-t^*)|\theta[k]}  
\end{equation} 
That is, the probability that random variable $\theta[k]$ does not satisfy both \eqref{eq:R-theta-bound} and \eqref{eq:R-theta-bound2} is $0$.
\end{lem}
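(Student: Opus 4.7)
The plan is to establish \eqref{eq:R-theta-bound} directly from the maximizing property of the algorithm combined with independence of $\theta[k]$ and $S[k]$, and then to derive \eqref{eq:R-theta-bound2} by simple algebra from \eqref{eq:R-theta-bound}.

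For \eqref{eq:R-theta-bound}, the update \eqref{eq:theta-update} makes $\theta[k]$ a deterministic function of $S[0], \ldots, S[k-1]$, so $\theta[k]$ is independent of $S[k]$. Conditioning on $\theta[k] = \theta_0$ therefore leaves $S[k]$ with its unconditional distribution, and the rule \eqref{eq:alg1a}-\eqref{eq:alg1b} attains
\begin{equation*}
R[k] - \theta_0 T[k] = \max_{(t,r) \in \script{D}(S[k])} \{r - \theta_0 t\}
\end{equation*}
pointwise in $S[k]$. For any stationary randomized alternative that, given $S[k]$, produces $(\tilde T, \tilde R) \in \script{D}(S[k])$ with unconditional expectations $(\expect{\tilde T}, \expect{\tilde R}) = (\tilde t, \tilde r) \in \script{A}$, the pointwise maximizer dominates, so taking expectations over $S[k]$ yields
\begin{equation*}
\expect{R[k] - \theta_0 T[k] \given \theta[k] = \theta_0} \geq \tilde r - \theta_0 \tilde t.
\end{equation*}
Taking the supremum over $\script{A}$ and extending to $\overline{\script{A}}$ by continuity of $(t,r) \mapsto r - \theta_0 t$, then specializing to $(\tilde t, \tilde r) = (t^*, r^*)$ and using $r^* = \theta^* t^*$ delivers \eqref{eq:R-theta-bound} for almost all realizations of $\theta[k]$.

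Inequality \eqref{eq:R-theta-bound2} then follows by rewriting \eqref{eq:R-theta-bound} as $\expect{R[k] \given \theta[k]} \geq \theta[k]\expect{T[k] \given \theta[k]} + t^*\theta^* - t^*\theta[k]$ and subtracting $\theta^*\expect{T[k] \given \theta[k]}$ from both sides, which yields
\begin{equation*}
\expect{R[k] - \theta^* T[k] \given \theta[k]} \geq (\theta[k] - \theta^*)\expect{T[k] \given \theta[k]} - t^*(\theta[k] - \theta^*) = (\theta[k] - \theta^*)\expect{T[k] - t^* \given \theta[k]}.
\end{equation*}

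The main obstacle is a minor technicality rather than a substantive difficulty: the optimizer $(t^*, r^*)$ may lie in $\overline{\script{A}} \setminus \script{A}$, so no single stationary randomized policy achieves it exactly. This is resolved by approximating $(t^*, r^*)$ by a sequence $(t_n, r_n) \in \script{A}$ and passing to the limit, using continuity of the linear functional $r - \theta_0 t$. Measurability subtleties regarding the conditional expectations $\expect{T[k] \given \theta[k]}$ and $\expect{R[k] \given \theta[k]}$ are absorbed into Lemma~\ref{lem:tech-intro}, which guarantees that suitable versions exist and place the resulting conditional-expectation vector in $\overline{\script{A}}$ with probability one.
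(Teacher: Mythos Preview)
Your proposal is correct and follows essentially the same route as the paper: compare the algorithm's maximizing choice against an arbitrary stationary randomized policy achieving $(\tilde t,\tilde r)\in\script{A}$, use independence of $\theta[k]$ and $S[k]$ to pass to conditional expectations, approximate $(t^*,r^*)\in\overline{\script{A}}$ by a sequence in $\script{A}$ to obtain \eqref{eq:R-theta-bound}, and then derive \eqref{eq:R-theta-bound2} by the same linear rearrangement (the paper phrases it as ``adding $(\theta[k]-\theta^*)\expect{T[k]\mid\theta[k]}$ to both sides,'' which is algebraically identical to your subtraction). The paper is slightly more explicit about the countable-intersection argument needed so that \eqref{eq:above-inequality} holds simultaneously for all $(t_i,r_i)$ with probability~1 before taking the limit, but you flag this in your obstacle paragraph.
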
 

\begin{proof} 
Fix $k \in \{0, 1, 2, \ldots\}$.  The decision vector $(T[k], R[k])$ 
is chosen by observing $\theta[k]$ and $S[k]$ and selecting the vector in $\script{D}(S[k])$ that maximizes 
$R[k]-\theta[k]T[k]$, and so 
$$ R[k]-\theta[k]T[k] \geq R^*[k] - \theta[k]T^*[k]$$
where $(T^*[k], R^*[k])$ is any other (potentially randomized) vector in $\script{D}(S[k])$. Taking conditional expectations gives (with probability 1):\footnote{This uses the measure theory 
fact that if $X$ and $Y$ are random variables with finite expectations that satisfy $X-Y\geq 0$ surely,  and if $Z$ is another random variable, then $\expect{X-Y|Z}\geq 0$ with probability 1.} 
\begin{equation} \label{eq:key-compare} 
\expect{R[k]-\theta[k]T[k]|\theta[k]}  \geq \expect{R^*[k]|\theta[k]} - \theta[k]\expect{T^*[k]|\theta[k]} 
\end{equation} 
Note that  $\theta[k]$ depends on history in the system that occurred before frame $k$, and in particular $\theta[k]$ is independent of $S[k]$. Fix $(t, r) \in \script{A}$. By definition of $\script{A}$, there is a conditional distribution for choosing $(T[0], R[0]) \in \script{D}(S[0])$, given the observed $S[0]$, such that 
$(\expect{T[0]}, \expect{R[0]}) = (t,r)$. Since $S[k]$ is independent of $\theta[k]$ and has the same distribution as $S[0]$, we can use the same conditional distribution to produce a random vector  $(T^*[k], R^*[k]) \in \script{D}(S[k])$ 
that is independent of $\theta[k]$ such that 
$$(\expect{T^*[k]}, \expect{R^*[k]}) = (t,r)$$
and since $(T^*[k], R^*[k])$ is independent of $\theta[k]$ we have (with probability 1): 
\begin{align*}
&\expect{T^*[k]|\theta[k]} = t\\
&\expect{R^*[k]|\theta[k]} = r
\end{align*} 
Substituting these identities into \eqref{eq:key-compare}
gives (with probability 1): 
\begin{equation}
\expect{R[k]-\theta[k]T[k]|\theta[k]} \geq r - \theta[k] t \label{eq:above-inequality} 
\end{equation}
Let $(t^*, r^*)$ be a vector in $\overline{\script{A}}$ that satisfies $\theta^*=r^*/t^*$.   Since $(t^*, r^*)$ is in the \emph{closure} of the set $\script{A}$, there is a sequence of points  $\{(t_i, r_i)\}_{i=1}^{\infty}$ in $\script{A}$ that 
converge to the value $(t^*, r^*)$.  Since \eqref{eq:above-inequality} was shown to  hold (with probability 1) for an arbitrary $(t,r) \in \script{A}$, with probability $1$  it holds  simultaneously 
for all of the (countably many) $(t_i, r_i) \in \script{A}$ for $i \in \{1, 2, 3, \ldots\}$ and so:\footnote{Recall that if $\{F_i\}_{i=1}^{\infty}$ is an infinite sequence of events that satisfy $P[F_i]=1$ for all $i \in \{1, 2, 3, \ldots\}$ then $P[\cap_{i=1}^{\infty} F_i] = 1$.} 
$$ \expect{R[k]-\theta[k]T[k]|\theta[k]} \geq r_i - \theta[k]t_i \quad \forall i \in \{1, 2, 3, \ldots\}$$
Taking a limit as $i\rightarrow \infty$ yields (with probability $1$):
\begin{align*}
\expect{R[k]-\theta[k]T[k]|\theta[k]} &\geq r^* - \theta[k] t^*  \\
&= t^*(\theta^*-\theta[k]) 
\end{align*}
where the final equality uses $\theta^*=r^*/t^*$.  This proves \eqref{eq:R-theta-bound}.  
Adding  $(\theta[k]-\theta^*)\expect{T[k]|\theta[k]}$ to both sides proves  \eqref{eq:R-theta-bound2}.  
\end{proof}

\begin{lem} \label{lem2} Consider the algorithm \eqref{eq:alg1a}-\eqref{eq:theta-update} with any 
$\theta[0] \in [\theta_{min}, \theta_{max}]$ and any positive stepsizes $\{\eta[k]\}_{k=0}^{\infty}$. We have for all positive integers $K$: 
\begin{equation} \label{eq:abs-val} 
 \left|\theta^*-\frac{\sum_{k=0}^{K-1} \expect{R[k]}}{\sum_{k=0}^{K-1}\expect{T[k]}}\right| \leq  \frac{1}{KT_{min}}\sum_{k=0}^{K-1} \expect{|\theta[k]-\theta^*|\cdot|T[k]-t^*|}
 \end{equation} 
 where $T_{min}$ satisfies \eqref{eq:moment1}-\eqref{eq:moment2}.
\end{lem}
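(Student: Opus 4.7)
The plan is to derive the bound directly from inequality \eqref{eq:R-theta-bound2} of Lemma \ref{lem1}, using tower-property manipulations and the sign information provided by Lemma \ref{lem:y-theta-zero}. First I would take the unconditional expectation of both sides of \eqref{eq:R-theta-bound2}. Since $\theta[k]$ is itself $\theta[k]$-measurable, the tower rule yields
\begin{equation*}
\expect{R[k]} - \theta^*\expect{T[k]} \;\geq\; \expect{(\theta[k]-\theta^*)(T[k]-t^*)}.
\end{equation*}
Applying $|\cdot|$ inside the expectation on the right-hand side gives the lower bound
\begin{equation*}
\expect{R[k]} - \theta^*\expect{T[k]} \;\geq\; -\expect{|\theta[k]-\theta^*|\cdot|T[k]-t^*|}.
\end{equation*}

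Next I would use Lemma \ref{lem:y-theta-zero} to pin down the sign of the left-hand side. By Lemma \ref{lem:tech-intro}, $(\expect{T[k]|\theta[k]},\expect{R[k]|\theta[k]})\in\overline{\script{A}}$ almost surely, and by \eqref{eq:sup-is-zero} every $(t,r)\in\overline{\script{A}}$ satisfies $r-\theta^* t\leq 0$. Taking a further expectation (or equivalently using convexity of $\overline{\script{A}}$ to conclude $(\expect{T[k]},\expect{R[k]})\in\overline{\script{A}}$) gives
\begin{equation*}
\theta^*\expect{T[k]} - \expect{R[k]} \;\geq\; 0 \quad \text{for all } k.
\end{equation*}

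Finally I would sum the two inequalities from $k=0$ to $K-1$. The sign result implies that the sum $\theta^*\sum_k\expect{T[k]}-\sum_k\expect{R[k]}$ is nonnegative, so dividing by $\sum_{k=0}^{K-1}\expect{T[k]}\geq KT_{min}$ (using \eqref{eq:moment1}--\eqref{eq:moment2}) yields
\begin{equation*}
0 \;\leq\; \theta^* - \frac{\sum_{k=0}^{K-1}\expect{R[k]}}{\sum_{k=0}^{K-1}\expect{T[k]}} \;\leq\; \frac{1}{KT_{min}}\sum_{k=0}^{K-1}\expect{|\theta[k]-\theta^*|\cdot|T[k]-t^*|},
\end{equation*}
and the nonnegativity of the middle quantity lets me insert absolute-value bars on the left to obtain the stated bound.

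I do not expect a serious obstacle here; the main subtlety is bookkeeping, namely ensuring that the sign argument from Lemma \ref{lem:y-theta-zero} is applied to $(\expect{T[k]},\expect{R[k]})$ (not just the conditional version) so that the left-hand difference in the ratio has a definite sign and the absolute value on the outer difference collapses cleanly. The division by $\sum_k\expect{T[k]}$ rather than $KT_{min}$ itself is legitimate because the former dominates, and the inequality only weakens when we replace it by the smaller $KT_{min}$.
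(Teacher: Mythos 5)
Your proposal is correct and follows essentially the same route as the paper: the upper bound $\theta^*\geq$ ratio comes from $(\expect{T[k]},\expect{R[k]})\in\overline{\script{A}}$ together with Lemma \ref{lem:y-theta-zero}, and the lower bound comes from taking expectations of \eqref{eq:R-theta-bound2} via the tower rule, bounding the cross term by its absolute value, summing, and using $\sum_k\expect{T[k]}\geq KT_{min}$. The bookkeeping points you flag (sign of the outer difference, legitimacy of replacing the denominator by $KT_{min}$) are handled exactly as in the paper's proof.
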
 

\begin{proof} 
Fix $k \in \{0, 1, 2, \ldots\}$.  We have $(\expect{T[k]}, \expect{R[k]}) \in \overline{\script{A}}$ 
and so 
\begin{align*}
\expect{R[k]} - \theta^*\expect{T[k]} &\leq \sup_{(t, r) \in \overline{\script{A}}}[r-\theta^* t] = 0 
\end{align*}
where the final equality holds by Lemma \ref{lem:y-theta-zero}.  Summing over $k \in \{0, \ldots, K-1\}$ gives
$$ \sum_{k=0}^{K-1} \expect{R[k] - \theta^*T[k]} \leq 0 $$
Rearranging terms yields
\begin{equation} \label{eq:part1}
 \theta^* \geq \frac{\sum_{k=0}^{K-1} \expect{R[k]}}{\sum_{k=0}^{K-1} \expect{T[k]}}
 \end{equation}

On the other hand, we can  take expectations of \eqref{eq:R-theta-bound2} and use the law of iterated expectations to obtain 
$$ \expect{R[k]-\theta^*T[k]} \geq \expect{(\theta[k]-\theta^*)(T[k]-t^*)} $$
Summing this over $k \in \{0, \ldots, K-1\}$ and rearranging terms yields
\begin{align}
\frac{\sum_{k=0}^{K-1} \expect{R[k]}}{\sum_{k=0}^{K-1} \expect{T[k]}} &\geq \theta^* + \frac{1}{\sum_{k=0}^{K-1}\expect{T[k]}}\sum_{k=0}^{K-1}\expect{(\theta[k]-\theta^*)(T[k]-t^*)} \nonumber \\
&\geq \theta^* - \frac{1}{KT_{min}}\sum_{k=0}^{K-1} \expect{|\theta[k]-\theta^*|\cdot|T[k]-t^*|} \label{eq:part2}
\end{align}
Combining \eqref{eq:part1} and \eqref{eq:part2} proves the lemma. 
\end{proof}

To make the right-hand-side of \eqref{eq:abs-val} 
close to $0$, it is desirable to make the $\theta[k]$ parameter close to $\theta^*$.

\subsection{Analysis of the update rule} 

Consider the algorithm \eqref{eq:alg1a}-\eqref{eq:theta-update} with any $\theta[0]\in [\theta_{min}, \theta_{max}]$ and any positive stepsizes. Define $b$ as a constant that satisfies 
\begin{equation} \label{eq:b}
\frac{1}{2}\expect{(R[k]-\theta[k]T[k])^2} \leq b \quad \forall k \in \{0, 1, 2, \ldots\}
\end{equation} 
Such a constant $b$ exists because $\theta[k] \in [\theta_{min}, \theta_{max}]$ and 
the $R[k]$ and $T[k]$ variables satisfy the first and second moment bounds \eqref{eq:moment1}-\eqref{eq:moment5}.  The following lemma is  similar in spirit to the analysis of Robbins-Monro iterations for different systems
in \cite{robbins-monro}\cite{SGD-robust}.

\begin{lem}  Under algorithm \eqref{eq:alg1a}-\eqref{eq:theta-update} with any $\theta[0] \in [\theta_{min}, \theta_{max}]$ and  any positive stepsizes $\{\eta[k]\}_{k=0}^{\infty}$, we have for all frames $k \in \{0, 1, 2, \ldots\}$
\begin{equation} \label{eq:iteration} 
\frac{1}{2}\expect{(\theta[k+1]-\theta^*)^2} \leq  \left(\frac{1}{2}-T_{min}\eta[k]\right)\expect{(\theta[k]-\theta^*)^2} + \eta[k]^2b
\end{equation} 
where the constant $b$ satisfies \eqref{eq:b}; the constant  $T_{min}$ satisfies \eqref{eq:moment1}-\eqref{eq:moment2}; the constant $\theta^*$ is defined by  \eqref{eq:theta-star}. 
\end{lem}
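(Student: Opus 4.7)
The plan is to expand $(\theta[k+1]-\theta^*)^2$ from the update rule \eqref{eq:theta-update}, take expectations, and bound the resulting cross term via a ``negative drift'' argument. Since $\theta^* \in [\theta_{min}, \theta_{max}]$ by \eqref{eq:theta-bracket}, the projection onto $[\theta_{min},\theta_{max}]$ is nonexpansive relative to $\theta^*$, so
$$(\theta[k+1]-\theta^*)^2 \leq \bigl(\theta[k] + \eta[k](R[k]-\theta[k]T[k]) - \theta^*\bigr)^2.$$
Expanding the square, taking expectations, and dividing by two yields
$$\tfrac{1}{2}\expect{(\theta[k+1]-\theta^*)^2} \leq \tfrac{1}{2}\expect{(\theta[k]-\theta^*)^2} + \eta[k]\expect{(\theta[k]-\theta^*)(R[k]-\theta[k]T[k])} + \tfrac{\eta[k]^2}{2}\expect{(R[k]-\theta[k]T[k])^2}.$$
The last term is bounded by $\eta[k]^2 b$ directly from \eqref{eq:b}, so the entire proof reduces to establishing the drift inequality
$$\expect{(\theta[k]-\theta^*)(R[k]-\theta[k]T[k])} \leq -T_{min}\expect{(\theta[k]-\theta^*)^2}.$$

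To prove the drift inequality I would condition on $\theta[k]$ (using the tower property) and first establish the almost-sure pointwise bound
$$(\theta[k]-\theta^*)\expect{R[k]-\theta[k]T[k]\given\theta[k]} \leq -T_{min}(\theta[k]-\theta^*)^2,$$
then take an outer expectation. I would split into cases by the sign of $\theta[k]-\theta^*$. When $\theta[k]>\theta^*$, Lemma \ref{lem:tech-intro} places the pair $(\expect{T[k]\given\theta[k]},\expect{R[k]\given\theta[k]})$ in $\overline{\script{A}}$, and Lemma \ref{lem:y-theta-zero} then gives $\expect{R[k]-\theta^*T[k]\given\theta[k]}\leq 0$. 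Writing $R[k]-\theta[k]T[k]=(R[k]-\theta^*T[k]) - (\theta[k]-\theta^*)T[k]$ inside the conditional expectation and using $\expect{T[k]\given\theta[k]}\geq T_{min}$ (which also follows from $\overline{\script{A}}\subseteq[T_{min},T_{max}]\times\mathbb{R}$) yields $\expect{R[k]-\theta[k]T[k]\given\theta[k]}\leq -T_{min}(\theta[k]-\theta^*)$; multiplying by the positive quantity $\theta[k]-\theta^*$ produces the desired pointwise bound. When $\theta[k]<\theta^*$, I would instead invoke the one-sided inequality \eqref{eq:R-theta-bound} from Lemma \ref{lem1} to get $\expect{R[k]-\theta[k]T[k]\given\theta[k]}\geq t^*(\theta^*-\theta[k])\geq T_{min}(\theta^*-\theta[k])$, where the last step uses $t^*\geq T_{min}$ because $(t^*,r^*)\in\overline{\script{A}}$; multiplying by the negative quantity $\theta[k]-\theta^*$ flips the inequality into exactly the required form. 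The case $\theta[k]=\theta^*$ is trivial.

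The main obstacle, conceptually, is that Lemma \ref{lem1} supplies only a one-sided \emph{lower} bound on $\expect{R[k]-\theta[k]T[k]\given\theta[k]}$, which is the correct direction only when $\theta[k]<\theta^*$; to close the argument when $\theta[k]>\theta^*$ one needs a matching upper bound, and this is what Lemmas \ref{lem:tech-intro} and \ref{lem:y-theta-zero} together provide once one observes that the conditional expectations lie in $\overline{\script{A}}$. A secondary bookkeeping point is that both one-sided bounds must be weakened to use the single uniform constant $T_{min}$ (rather than $t^*$ or $\expect{T[k]\given\theta[k]}$) so that they collapse into the single drift coefficient appearing in \eqref{eq:iteration}. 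Once the drift bound is in place, substituting it and the noise bound back into the expansion produces \eqref{eq:iteration}.
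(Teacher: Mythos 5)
Your proposal is correct and follows essentially the same route as the paper's proof: projection nonexpansiveness, expansion of the square, the noise bound from \eqref{eq:b}, and a sign-based case split on $\theta[k]-\theta^*$ in which the case $\theta[k]<\theta^*$ uses the lower bound \eqref{eq:R-theta-bound} with $t^*\geq T_{min}$ and the case $\theta[k]>\theta^*$ uses Lemma \ref{lem:tech-intro} together with Lemma \ref{lem:y-theta-zero} and $\expect{T[k]\given\theta[k]}\geq T_{min}$. The only difference is the order in which you multiply by $(\theta[k]-\theta^*)$ versus take conditional expectations in the second case, which is immaterial.
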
 

\begin{proof}  Fix $k \in \{0, 1, 2, \ldots\}$. 
For simplicity of notation define $z[k] = \theta[k]+\eta[k](R[k]-\theta[k]T[k])$. 
Recall that $[x]_{\theta_{min}}^{\theta_{max}}$ denotes a projection 
of the real number $x$ onto the interval $[\theta_{min}, \theta_{max}]$.  
By \eqref{eq:theta-update} we have 
\begin{align*}
(\theta[k+1]-\theta^*)^2 &= ([z[k]]_{\theta_{min}}^{\theta_{max}} - \theta^*)^2\\
&\overset{(a)}{=} ([z[k]]_{\theta_{min}}^{\theta_{max}} - [\theta^*]_{\theta_{min}}^{\theta_{max}})^2\\
&\overset{(b)}{\leq}  (z[k]-\theta^*)^2
\end{align*}
where (a) uses the fact that $\theta^* \in [\theta_{min}, \theta_{max}]$; (b) uses the fact that the distance between the projections of two real numbers onto a closed interval is less than or equal to the distance between those real numbers. 
Thus
\begin{align*}
\frac{1}{2}(\theta[k+1]-\theta^*)^2 &\leq \frac{1}{2}(\theta[k] - \theta^* + \eta[k](R[k]-\theta[k]T[k]))^2\\
&=\frac{1}{2}(\theta[k]-\theta^*)^2 + \frac{\eta[k]^2}{2}(R[k]-\theta[k]T[k])^2 + \eta[k](\theta[k]-\theta^*)(R[k]-\theta[k]T[k])
\end{align*}
Taking expectations gives 
\begin{equation} \label{eq:sub1} 
 \frac{1}{2}\expect{(\theta[k+1]-\theta^*)^2} \leq \frac{1}{2}\expect{(\theta[k]-\theta^*)^2}  + \eta[k]^2b + \eta[k]\expect{(\theta[k]-\theta^*)(R[k]-\theta[k]T[k])} 
 \end{equation}  
To complete the proof it suffices to provide the following bound on the final term of \eqref{eq:sub1}:  
$$\eta[k]\expect{(\theta[k]-\theta^*)(R[k]-\theta[k]T[k])}  \leq -\eta[k]T_{min}\expect{(\theta[k]-\theta^*)^2} 
$$
To do this, it suffices to show the following conditional expectation holds for almost all  realizations of $\theta[k]$, that is with probability 1: 
\begin{equation} \label{eq:suffices} 
 (\theta[k]-\theta^*)\expect{R[k]-\theta[k]T[k]|\theta[k]} \leq -T_{min}(\theta[k]-\theta^*)^2 
\end{equation} 
To show \eqref{eq:suffices} we consider two cases. 

\begin{itemize} 
\item Case 1: Suppose $\theta[k]-\theta^*<0$.  By \eqref{eq:R-theta-bound} we have for almost
all $\theta[k]$ for which $\theta[k]-\theta^*<0$:
$$\expect{R[k]-\theta[k]T[k]|\theta[k]} \geq t^*(\theta^*-\theta[k]) $$
Multiplying both sides by the (negative) value $\theta[k]-\theta^*$ flips the inequality  to yield 
\begin{align*}
(\theta[k]-\theta^*)\expect{R[k]-\theta[k]T[k]|\theta[k]} &\leq -t^*(\theta^*-\theta[k])^2\\
&\leq  -T_{min}(\theta^*-\theta[k])^2
\end{align*}
where the final inequality holds because $t^*\geq T_{min}$.  Thus, \eqref{eq:suffices} holds in this Case 1. 

\item Case 2: Suppose $\theta[k]-\theta^* \geq 0$.  We have
\begin{align*}
(\theta[k]-\theta^*)(R[k]-\theta[k]T[k]) &=(\theta[k]-\theta^*)(R[k]-\theta^*T[k]) - T[k](\theta[k]-\theta^*)^2 
\end{align*}
Taking conditional expectations given $\theta[k]$ gives, for almost all $\theta[k]$ that satisfy 
$\theta[k]-\theta^*\geq 0$: 
\begin{align}
(\theta[k]-\theta^*)\expect{R[k]-\theta[k]T[k]|\theta[k]} &= (\theta[k]-\theta^*)\expect{R[k]-\theta^*T[k]|\theta[k]} - (\theta[k]-\theta^*)^2\expect{T[k]|\theta[k]} \nonumber \\
&\leq (\theta[k]-\theta^*)\expect{R[k]-\theta^*T[k]|\theta[k]} - T_{min}(\theta[k]-\theta^*)^2 \label{eq:underbrace} 
\end{align}
where the final inequality holds because $\expect{T[k]|\theta[k]} \geq T_{min}$ (recall Lemma \ref{lem:tech-intro}).  
However, by Lemma \ref{lem:tech-intro} we know  the conditional 
expectations $(\expect{T[k]|\theta[k]}, \expect{R[k]|\theta[k]})$ are in the set $\overline{\script{A}}$ (with probability 1) and so (with probability 1): 
\begin{align*}
\expect{R[k]|\theta[k]}- \theta^* \expect{T[k] | \theta[k]} &\leq \sup_{(t,r) \in \overline{\script{A}}}\left[ r -\theta^* t\right] \\
&= 0
\end{align*}
where the final equality holds by Lemma \ref{lem:y-theta-zero}. Multiplying the above inequality by the nonnegative value $(\theta[k]-\theta^*)$ does not flip the inequality
and we obtain 
$$ (\theta[k]-\theta^*)\expect{R[k]-\theta[k]T[k]|\theta[k]} \leq 0$$
Substituting this into \eqref{eq:underbrace} shows that \eqref{eq:suffices} holds in this Case 2. 
\end{itemize}
 \end{proof}

\subsection{Decreasing stepsize} 

The previous lemma can be used with a constant stepsize $\eta[k]=\epsilon$ to prove a bound
on the mean squared error between $\theta[k]$ and $\theta^*$. However, 
the following lemma uses a decreasing stepsize to achieve a faster convergence.  
The proof uses an induction argument inspired by the analysis in 
\cite{Frank-Wolfe} for a different type of problem (deterministic Frank-Wolfe
methods for convex optimization). 
Due to the renewal optimization structure of the current problem, the 
stepsize used here must be sized carefully with respect to the $T_{min}$ parameter. 
This required care in choosing the stepsize  is 
analogous to the discussion on stepsize for a different class of systems in \cite{SGD-robust}:
Work in \cite{SGD-robust} shows how, for a class of systems with strongly convex properties, 
a fast $O(1/k)$ convergence rate can be degraded into a slow
$O(1/k^{1/5})$ rate if the stepsize parameter is not carefully sized according to a strong convexity parameter  (which may be difficult to know in practice).  That example is used to motivate alternative robust approaches for the systems studied there. 
The  lemma below applies to a different kind of system and does not require any type of strong convexity/concavity.  It uses a value $T_{min}$ for the stepsize selection, but $T_{min}$ is easy to know in practice. 
For example, if all frame sizes are at least 1 unit of time, we can use $T_{min}=1$.

\begin{lem} Under the algorithm \eqref{eq:alg1a}-\eqref{eq:theta-update} with any $\theta[0] \in [\theta_{min}, \theta_{max}]$ and with stepsizes 
$$\eta[k]=\frac{1}{(k+2)T_{min}} \quad \forall  k \in \{0, 1, 2, \ldots\}$$ 
we have 
\begin{equation} \label{eq:decreasing1} 
\expect{(\theta[k]-\theta^*)^2} \leq \frac{2b}{k T_{min}^2} \quad \forall k \in \{1, 2, 3, \ldots\}
\end{equation}
where the constant $b$ satisfies \eqref{eq:b}; the constant  $T_{min}$ satisfies \eqref{eq:moment1}-\eqref{eq:moment2}; the constant $\theta^*$ is defined by  \eqref{eq:theta-star}. 
\end{lem}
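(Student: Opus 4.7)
The plan is to invoke the one-step recursion from the preceding lemma (the inequality \eqref{eq:iteration}) with the specific stepsize $\eta[k] = 1/((k+2)T_{min})$, simplify it into a clean scalar recursion, and then close the argument by a straightforward induction on $k$. Writing $V[k] = \expect{(\theta[k]-\theta^*)^2}$ and substituting $\eta[k]$, the factor $\left(\tfrac{1}{2} - T_{min}\eta[k]\right)$ collapses to $\tfrac{1}{2} - \tfrac{1}{k+2}$ and the noise term $\eta[k]^2 b$ becomes $b/((k+2)^2 T_{min}^2)$; multiplying through by $2$ yields the recursion
\begin{equation*}
V[k+1] \;\leq\; \frac{k}{k+2}\, V[k] + \frac{2b}{(k+2)^2 T_{min}^2}.
\end{equation*}
The cancellation of $T_{min}$ in the contraction factor is the whole reason the stepsize was chosen with $T_{min}$ in the denominator; this is the one place the analysis really depends on the choice being matched to $T_{min}$.

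Next I would establish the base case $k=1$. Plugging $k=0$ into the recursion gives $V[1] \leq 0 \cdot V[0] + 2b/(4 T_{min}^2) = b/(2T_{min}^2)$, which is comfortably below the target bound $2b/T_{min}^2$. (Note the cleanness of the inductive hypothesis also depends on the fact that the multiplier $k/(k+2)$ vanishes at $k=0$, so $V[0]$ never appears in the bound — which is essential since $V[0]$ is only controlled by the crude $(\theta_{max}-\theta_{min})^2$.)

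For the inductive step, assume $V[k] \leq 2b/(k T_{min}^2)$ for some $k \geq 1$. Then
\begin{equation*}
V[k+1] \;\leq\; \frac{k}{k+2}\cdot\frac{2b}{kT_{min}^2} + \frac{2b}{(k+2)^2 T_{min}^2} \;=\; \frac{2b}{(k+2)T_{min}^2}\left(1 + \frac{1}{k+2}\right) \;=\; \frac{2b(k+3)}{(k+2)^2 T_{min}^2}.
\end{equation*}
To conclude $V[k+1] \leq 2b/((k+1)T_{min}^2)$ it suffices to check $(k+1)(k+3) \leq (k+2)^2$, which is just $k^2+4k+3 \leq k^2+4k+4$. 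This closes the induction.

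The whole argument is short once the right recursion is in place; the main subtlety — more a bookkeeping point than a real obstacle — is that the stepsize must be exactly tuned so that the contraction coefficient becomes $k/(k+2)$ (or at worst $1 - 2/(k+c)$ for a small constant $c$); any looser tuning (e.g.\ $1/((k+1)T_{min})$) would make the telescoping in the inductive step fail by an $O(1)$ factor, and any misestimation of $T_{min}$ would degrade the rate, as the comparison to \cite{SGD-robust} cited in the text already warned.
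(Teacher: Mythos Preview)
Your proposal is correct and essentially identical to the paper's proof: both substitute the stepsize into the recursion \eqref{eq:iteration}, reduce it to $V[k+1]\le \tfrac{k}{k+2}V[k]+\tfrac{2b}{(k+2)^2T_{min}^2}$, handle the base case $k=1$ using the vanishing factor at $k=0$, and close the induction via $(k+1)(k+3)\le(k+2)^2$. The only cosmetic difference is that the paper works with $z_k=\tfrac12\expect{(\theta[k]-\theta^*)^2}$ rather than your $V[k]$.
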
 

\begin{proof} 
For simplicity define 
$$ z_k = \frac{1}{2}\expect{(\theta[k]-\theta^*)^2} \quad \forall k \in \{0, 1, 2, \ldots\}$$
It suffices to show $z_k\leq \frac{b}{kT_{min}^2}$ for all $k \in \{1, 2, 3, \ldots\}$. 
From \eqref{eq:iteration}  we have
\begin{equation} \label{eq:zk}
z_{k+1} \leq  (1-2T_{min}\eta[k])z_k + \eta[k]^2b \quad \forall k \in \{0, 1, 2, \ldots\}
\end{equation} 
Applying the above inequalty at $k=0$ and using $\eta[0]=1/(2T_{min})$ gives
$$ z_1 \leq \frac{b}{4T_{min}^2}$$
We now use induction with the base case $k=1$. 
Suppose that $z_k \leq b/(kT_{min}^2)$ for some $k \in \{1, 2, 3, \ldots\}$ (it holds for $k=1$ by the above inequality, since $b/(4T_{min}^2)\leq b/T_{min}^2$).  We show the same holds for $k+1$. We have from \eqref{eq:zk}: 
\begin{align*}
z_{k+1} &\leq (1-2T_{min}\eta[k])z_k + \eta[k]^2b \\
&\overset{(a)}{=} (\frac{k}{k+2})z_k + \frac{b}{(k+2)^2T_{min}^2}\\
&\overset{(b)}{\leq} (\frac{k}{k+2})\frac{b}{kT_{min}^2}+ \frac{b}{(k+2)^2T_{min}^2}\\
&= \frac{b(k+3)}{(k+2)^2T_{min}^2}\\
&\overset{(c)}{\leq} \frac{b}{(k+1)T_{min}^2}
\end{align*}
where (a) holds because $\eta[k]=\frac{1}{(k+2)T_{min}}$; (b) holds by the induction assumption 
$z_k\leq \frac{b}{kT_{min}^2}$; (c) holds because $(k+3)/(k+2)^2 \leq 1/(k+1)$ for all $k\geq 0$. 

\end{proof} 

\subsection{Online performance theorem} 

\begin{thm} \label{thm:1} (General performance) Under the algorithm \eqref{eq:alg1a}-\eqref{eq:theta-update} with $\theta[0] \in [\theta_{min}, \theta_{max}]$ and stepsizes $\eta[k]=\frac{1}{(k+2)T_{min}}$ for  $k \in \{0, 1, 2, \ldots\}$ we have 
\begin{align*}
\left|\theta^*-\frac{\sum_{k=0}^{K-1} \expect{R[k]}}{\sum_{k=0}^{K-1} \expect{T[k]}}\right| \leq \frac{\sqrt{2C_1}}{KT_{min}}\left[|\theta[0]-\theta^*| +\frac{-\sqrt{2b} + \sqrt{8b(K-1)}}{T_{min}}\right] \quad \forall K \in \{2, 3, 4, \ldots\} 
\end{align*}
where $C_1$ satisfies \eqref{eq:moment4} and 
$b$ satisfies \eqref{eq:b}. Hence, deviation from the optimal ratio $\theta^*$ decays like $O(1/\sqrt{K})$. 
\end{thm}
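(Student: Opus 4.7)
The plan is to combine three ingredients already available: the deterministic bound from Lemma~\ref{lem2}, the Cauchy--Schwarz inequality, and the decreasing-stepsize MSE bound \eqref{eq:decreasing1}. The starting point is the inequality
\begin{equation*}
\left|\theta^*-\frac{\sum_{k=0}^{K-1} \expect{R[k]}}{\sum_{k=0}^{K-1}\expect{T[k]}}\right| \leq  \frac{1}{KT_{min}}\sum_{k=0}^{K-1} \expect{|\theta[k]-\theta^*|\cdot|T[k]-t^*|},
\end{equation*}
so the work reduces to controlling each summand $\expect{|\theta[k]-\theta^*|\cdot|T[k]-t^*|}$ from above. First I would apply Cauchy--Schwarz to split this into $\sqrt{\expect{(\theta[k]-\theta^*)^2}}\cdot\sqrt{\expect{(T[k]-t^*)^2}}$, which decouples the auxiliary variable from the task duration.

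The next step is to bound the $T$-factor uniformly in $k$ by a constant depending only on $C_1$. The key observation is that $T[k]\geq 0$ and $t^*\geq T_{min}>0$, so $|T[k]-t^*|\leq \max\{T[k],t^*\}$ and hence $(T[k]-t^*)^2\leq T[k]^2+(t^*)^2$. Taking expectations gives $\expect{(T[k]-t^*)^2}\leq C_1+(t^*)^2$, and because $t^*$ is a 1-shot expectation of $T[0]$ achieved by some measurable rule, Jensen's inequality gives $(t^*)^2\leq \expect{T[0]^2}\leq C_1$. Thus $\sqrt{\expect{(T[k]-t^*)^2}}\leq\sqrt{2C_1}$ for every $k$.

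For the $\theta$-factor I would use the MSE bound \eqref{eq:decreasing1}: for $k\geq 1$, $\sqrt{\expect{(\theta[k]-\theta^*)^2}}\leq \sqrt{2b}/(T_{min}\sqrt{k})$, while for $k=0$ the random variable is deterministic and the square root is exactly $|\theta[0]-\theta^*|$. Plugging these into the sum and pulling $\sqrt{2C_1}/(KT_{min})$ out, the theorem reduces to the elementary estimate $\sum_{k=1}^{K-1}1/\sqrt{k}\leq 2\sqrt{K-1}-1$, which I would verify by the integral comparison $\frac{1}{\sqrt{k}}\leq \int_{k-1}^{k}\frac{dx}{\sqrt{x}}$ for $k\geq 2$ combined with the $k=1$ term. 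Multiplying by $\sqrt{2b}/T_{min}$ yields the bracketed expression $|\theta[0]-\theta^*|+(-\sqrt{2b}+\sqrt{8b(K-1)})/T_{min}$, which is exactly what the theorem claims.

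The routine part is the integral estimate and bookkeeping; the main obstacle (really just a subtlety) is making sure that $\sqrt{\expect{(T[k]-t^*)^2}}$ is bounded by $\sqrt{2C_1}$ without picking up any extra $T_{max}$-type constant, which is why the non-negativity trick $|T[k]-t^*|\leq \max\{T[k],t^*\}$ and the Jensen bound $(t^*)^2\leq C_1$ must be used in tandem rather than a cruder $(a-b)^2\leq 2a^2+2b^2$ split.
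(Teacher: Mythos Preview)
Your proposal is correct and matches the paper's proof step for step: Lemma~\ref{lem2}, Cauchy--Schwarz, the bound $(T[k]-t^*)^2\leq T[k]^2+(t^*)^2\leq T[k]^2+C_1$, the MSE estimate \eqref{eq:decreasing1}, and the same integral comparison $\sum_{k=1}^{K-1}1/\sqrt{k}\leq 2\sqrt{K-1}-1$. Your explicit Jensen justification for $(t^*)^2\leq C_1$ is a detail the paper leaves implicit; strictly speaking $(t^*,r^*)\in\overline{\script{A}}$ rather than $\script{A}$, so the bound is obtained as a limit of achievable expectations, but this is immediate.
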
 

\begin{proof} 
From \eqref{eq:abs-val} we have for all  integers $K\geq 2$: 
\begin{align*}
\left|\theta^*-\frac{\sum_{k=0}^{K-1} \expect{R[k]}}{\sum_{k=0}^{K-1}\expect{T[k]}}\right| &\leq  \frac{1}{KT_{min}}\sum_{k=0}^{K-1}\expect{|\theta[k]-\theta^*|\cdot|T[k]-t^*|}\\
&\overset{(a)}{\leq} \frac{1}{KT_{min}}\sum_{k=0}^{K-1}\sqrt{\expect{(\theta[k]-\theta^*)^2}\expect{(T[k]-t^*)^2}}\\
&\overset{(b)}{\leq} \frac{\sqrt{2C_1}}{KT_{min}}\left[\sqrt{\expect{(\theta[0]-\theta^*)^2}} + \sum_{k=1}^{K-1} \sqrt{\expect{(\theta[k]-\theta^*)^2}}\right]\\
&\overset{(c)}{\leq}  \frac{\sqrt{2C_1}}{KT_{min}}\left[\sqrt{\expect{(\theta[0]-\theta^*)^2}} +\sum_{k=1}^{K-1} \sqrt{\frac{2b}{kT_{min}^2}}\right]\\
&\overset{(d)}{\leq}  \frac{\sqrt{2C_1}}{KT_{min}}\left[\sqrt{\expect{(\theta[0]-\theta^*)^2}} + \frac{-\sqrt{2b} + \sqrt{8b(K-1)}}{T_{min}}\right]
\end{align*}
where (a) follows by the Cauchy-Schwarz inequality; 
(b) holds by \eqref{eq:moment4} and $(T[k]-t^*)^2 \leq T[k]^2 + (t^*)^2\leq T[k]^2 + C_1$; (c) holds by \eqref{eq:decreasing1}; (d) holds 
because $\sum_{k=1}^{K-1}\frac{1}{\sqrt{k}} \leq 1 + \int_1^{K-1} \frac{1}{\sqrt{t}}dt$. 
\end{proof}

\subsection{Strongly concave curvature} \label{section:strongly-concave} 

This section proves that the algorithm achieves a faster convergence rate in the special case when the set $\overline{\script{A}}$ has a strongly concave property. Specifically,  suppose 
the set $\overline{\script{A}}$ has a strongly concave upper boundary about the optimality 
point $(t^*,r^*)$, so that for some $c>0$ we have (see Fig. \ref{fig:example-strongly}): 
\begin{equation} \label{eq:curvature} 
r \leq r^* + \theta^*(t-t^*) - \frac{c}{2}(t-t^*)^2 \quad \forall (t,r) \in \overline{\script{A}}
\end{equation} 
\begin{thm} \label{thm:2} (Performance with strongly concave curvature) Assume $\overline{\script{A}}$ has the strongly concave curvature property specified in \eqref{eq:curvature}. Under the algorithm \eqref{eq:alg1a}-\eqref{eq:theta-update} with $\theta[0] \in [\theta_{min}, \theta_{max}]$ and stepsize $\eta[k]=\frac{1}{(k+2)T_{min}}$ for $k \in \{0, 1, 2, \ldots\}$ we have 
\begin{equation} \label{eq:achieve} 
\left|\theta^*-\frac{\sum_{k=0}^{K-1} \expect{R[k]}}{\sum_{k=0}^{K-1} \expect{T[k]}}\right| \leq \frac{2(\theta[0]-\theta^*)^2 + \frac{4b}{T_{min}^2}(1+\log(K-1))}{KcT_{min}} \quad \forall K \in \{2, 3, 4, \ldots\} 
\end{equation} 
and so deviation from the optimal ratio $\theta^*$ decays like $O(\log(K)/K)$. 
\end{thm}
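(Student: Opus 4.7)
The plan is to follow the same two-sided structure as Lemma 2, but to exploit strong concavity to avoid the Cauchy--Schwarz step that cost a square root in Theorem \ref{thm:1}. The upper bound $\theta^*\geq \sum_{k}\expect{R[k]}/\sum_{k}\expect{T[k]}$ is immediate from Lemma \ref{lem:y-theta-zero} exactly as in the proof of Lemma \ref{lem2}, so all the work lies in sharpening the matching lower bound.

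\textbf{Step 1: a one-sided Lipschitz bound on conditional expectations.} Using Lemma \ref{lem:tech-intro}, the point $(\expect{T[k]|\theta[k]},\expect{R[k]|\theta[k]})$ lies in $\overline{\script{A}}$ with probability $1$, so the strong concavity hypothesis \eqref{eq:curvature} applied to this point yields
\begin{equation*}
\expect{R[k]-\theta^*T[k]\mid\theta[k]} \;\leq\; -\tfrac{c}{2}\bigl(\expect{T[k]\mid\theta[k]}-t^*\bigr)^2.
\end{equation*}
Rewriting the inequality from Lemma \ref{lem1} \eqref{eq:R-theta-bound} as $\expect{R[k]-\theta^*T[k]\mid\theta[k]}\geq (\theta[k]-\theta^*)\expect{T[k]-t^*\mid\theta[k]}$ (which is \eqref{eq:R-theta-bound2}) and combining the two gives, almost surely,
\begin{equation*}
(\theta^*-\theta[k])\bigl(\expect{T[k]\mid\theta[k]}-t^*\bigr)\;\geq\;\tfrac{c}{2}\bigl(\expect{T[k]\mid\theta[k]}-t^*\bigr)^2.
\end{equation*}
Cancelling one factor of $|\expect{T[k]\mid\theta[k]}-t^*|$ (trivially true when this factor is zero) produces the key inequality
\begin{equation*}
\bigl|\expect{T[k]\mid\theta[k]}-t^*\bigr|\;\leq\;\tfrac{2}{c}\,|\theta[k]-\theta^*|.
\end{equation*}
This is the main content: strong concavity forces the conditional first moment of $T[k]$ to move toward $t^*$ at a rate controlled by $|\theta[k]-\theta^*|$, which replaces the much weaker moment bound $\sqrt{\expect{(T[k]-t^*)^2}}\leq\sqrt{2C_1}$ used in Theorem \ref{thm:1}.

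\textbf{Step 2: bound the cross sum without Cauchy--Schwarz.} Revisiting the derivation of \eqref{eq:part2}, by the tower property
\begin{equation*}
\expect{(\theta[k]-\theta^*)(T[k]-t^*)} = \expect{(\theta[k]-\theta^*)\bigl(\expect{T[k]\mid\theta[k]}-t^*\bigr)},
\end{equation*}
and applying Step 1 pointwise gives $\bigl|(\theta[k]-\theta^*)\bigl(\expect{T[k]\mid\theta[k]}-t^*\bigr)\bigr|\leq\tfrac{2}{c}(\theta[k]-\theta^*)^2$. Inserting this into the inequality obtained just before \eqref{eq:part2} yields
\begin{equation*}
\frac{\sum_{k=0}^{K-1}\expect{R[k]}}{\sum_{k=0}^{K-1}\expect{T[k]}} \;\geq\; \theta^* \;-\; \frac{2}{c\,T_{min}K}\sum_{k=0}^{K-1}\expect{(\theta[k]-\theta^*)^2},
\end{equation*}
where we used $\sum_{k=0}^{K-1}\expect{T[k]}\geq KT_{min}$.

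\textbf{Step 3: sum the mean-squared-error bound.} The previous lemma gives $\expect{(\theta[k]-\theta^*)^2}\leq 2b/(kT_{min}^2)$ for $k\geq 1$, hence for $K\geq 2$
\begin{equation*}
\sum_{k=0}^{K-1}\expect{(\theta[k]-\theta^*)^2} \;\leq\; (\theta[0]-\theta^*)^2 + \frac{2b}{T_{min}^2}\sum_{k=1}^{K-1}\frac{1}{k}\;\leq\;(\theta[0]-\theta^*)^2 + \frac{2b}{T_{min}^2}\bigl(1+\log(K-1)\bigr).
\end{equation*}
Substituting into Step 2 produces the claimed lower bound on $\sum\expect{R[k]}/\sum\expect{T[k]}$, and combining with the matching upper bound from Lemma \ref{lem:y-theta-zero} yields \eqref{eq:achieve}.

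The main obstacle is Step 1: extracting the Lipschitz-type inequality from the combination of the suboptimality bound in Lemma \ref{lem1} with the strong concavity upper bound. Once that replaces the crude $\sqrt{\expect{(T[k]-t^*)^2}}$ factor, everything else is a routine harmonic-sum calculation, and the square-root in Theorem \ref{thm:1} is upgraded to $\log(K)/K$.
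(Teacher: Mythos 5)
Your proof is correct and reproduces the stated bound with the exact constants, but the key middle step is organized differently from the paper's. Both arguments rest on the same ingredients: Lemma \ref{lem:tech-intro} to place $(\expect{T[k]|\theta[k]},\expect{R[k]|\theta[k]})$ in $\overline{\script{A}}$, the curvature inequality \eqref{eq:curvature} applied to that conditional-expectation point, the bound \eqref{eq:R-theta-bound2}, the mean-squared-error bound \eqref{eq:decreasing1}, and a harmonic sum. Where you diverge is in how \eqref{eq:R-theta-bound2} and the curvature bound are combined. The paper splits the cross term via Young's inequality $ab\geq -(a^2+b^2)/2$ with a tuned parameter $\beta$, substitutes the curvature bound for $(\expect{T[k]|\theta[k]}-t^*)^2$, and exploits that $\expect{R[k]-\theta^*T[k]|\theta[k]}$ then reappears on the right-hand side (a self-bounding step, closed by choosing $\beta^2=c/2$), arriving directly at $\expect{R[k]-\theta^*T[k]|\theta[k]}\geq -\tfrac{2}{c}(\theta[k]-\theta^*)^2$. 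You instead sandwich $\expect{R[k]-\theta^*T[k]|\theta[k]}$ between the two bounds and cancel a factor of $|\expect{T[k]|\theta[k]}-t^*|$ to extract the explicit Lipschitz-type inequality $|\expect{T[k]|\theta[k]}-t^*|\leq \tfrac{2}{c}|\theta[k]-\theta^*|$, which you then feed back into the cross-term machinery already set up in the proof of Lemma \ref{lem2}. The two routes yield the identical per-frame estimate $\expect{R[k]-\theta^*T[k]}\geq -\tfrac{2}{c}\expect{(\theta[k]-\theta^*)^2}$ and hence the same final constant; yours has the merit of isolating, as a standalone and interpretable statement, exactly what strong concavity buys (the conditional mean of $T[k]$ tracks $t^*$ linearly in the error of $\theta[k]$), and of reusing Lemma \ref{lem2}'s derivation rather than rederiving the lower bound from scratch, while the paper's Young's-inequality version avoids the (harmless, and correctly handled by you) case split on whether the cancelled factor vanishes.
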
 
\begin{proof} 
For almost all $\theta[k]$ we have (by Lemma \ref{lem:tech-intro}): 
$$ (\expect{T[k]|\theta[k]}, \expect{R[k]|\theta[k]}) \in \overline{\script{A}}$$
And so by \eqref{eq:curvature} we have  with probability 1: 
\begin{equation} \label{eq:curvature-prob1} 
 \expect{R[k]|\theta[k]} \leq r^* + \theta^*(\expect{T[k]|\theta[k]}-t^*) - \frac{c}{2}(\expect{T[k]|\theta[k]}-t^*)^2   
 \end{equation} 
From \eqref{eq:R-theta-bound2} we have for all real numbers $\beta>0$ (with prob 1): 
\begin{align*}
\expect{R[k]-\theta^*T[k]|\theta[k]} &\geq (\theta[k]-\theta^*)(\expect{T[k]|\theta[k]} -t^*)\\
&= \left(\frac{1}{\beta}(\theta[k]-\theta^*)\right)\beta(\expect{T[k]|\theta[k]}-t^*)\\
&\overset{(a)}{\geq}-\frac{(\theta[k]-\theta^*)^2}{2\beta^2} - \frac{\beta^2(\expect{T[k]|\theta[k]}-t^*)^2}{2} \\
&\overset{(b)}{\geq} -\frac{(\theta[k]-\theta^*)^2}{2\beta^2} - \frac{\beta^2}{c}\left[r^* + \theta^*(\expect{T[k]|\theta[k]} - t^*) - \expect{R[k]|\theta[k]}\right]  \\
&\overset{(c)}{=} -\frac{(\theta[k]-\theta^*)^2}{2\beta^2} +\frac{\beta^2}{c}\expect{R[k]-\theta^*T[k]|\theta[k]}
\end{align*}
where (a)  uses the fact $ab \geq -\frac{a^2+b^2}{2}$ for all real numbers $a,b$; (b) uses \eqref{eq:curvature-prob1}; (c) uses $r^*-\theta^*t^*=0$. 
Choose $\beta>0$ so that $\beta^2/c = 1/2$.  Then
$$ \expect{R[k]-\theta^*T[k]|\theta[k]} \geq -\frac{(\theta[k]-\theta^*)^2}{\beta^2}= -2\frac{(\theta[k]-\theta^*)^2}{c}$$
where the final equality uses $\beta^2=c/2$. Taking expectations of the above and using the law of iterated expectations gives
$$\expect{R[k]-\theta^*T[k]} \geq  -2\frac{\expect{(\theta[k]-\theta^*)^2}}{c}$$
Fix integer $K\geq 2$. Summing the above inequality over $k \in \{0, \ldots, K-1\}$ gives 
\begin{align*}
\sum_{k=0}^{K-1} \expect{R[k]} - \theta^* \sum_{k=0}^{K-1}\expect{T[k]} &\geq -2\frac{\expect{(\theta[0]-\theta^*)^2}}{c} -2\sum_{k=1}^{K-1}\frac{\expect{(\theta[k]-\theta^*)^2}}{c}\\
&\overset{(a)}{\geq} -2\frac{\expect{(\theta[0]-\theta^*)^2}}{c} -2\sum_{k=1}^{K-1}\frac{2b}{kcT_{min}^2}\\
&\overset{(b)}{\geq}-2\frac{\expect{(\theta[0]-\theta^*)^2}}{c} - \frac{4b}{cT_{min}^2}(1+\log(K-1))
\end{align*}
where (a) holds by \eqref{eq:decreasing1}; (b) holds because $\sum_{k=1}^{K-1} 1/k \leq 1 + \int_1^{K-1} (1/t)dt$.  Rearranging terms gives 
\begin{align*}
\frac{\sum_{k=0}^{K-1} \expect{R[k]}}{\sum_{k=0}^{K-1} \expect{T[k]}} &\geq \theta^* - \frac{2\expect{(\theta[0]-\theta^*)^2} + \frac{4b}{T_{min}^2}(1+\log(K-1))}{c\sum_{k=0}^{K-1} \expect{T[k]}}\\
&\geq \theta^* - \frac{2\expect{(\theta[0]-\theta^*)^2} + \frac{4b}{T_{min}^2}(1+\log(K-1))}{KcT_{min}}
\end{align*}
where the final inequality holds because $\expect{T[k]}\geq T_{min}$ for all $k$.  On the other hand \eqref{eq:part1} implies that the ratio on the left-hand-side is less than or equal to $\theta^*$.  This proves the result. 
\end{proof} 

\subsection{Discussion} 

Theorem \ref{thm:1} shows the optimality gap decays like $O(1/\sqrt{K})$ for general systems. 
Theorem \ref{thm:2} shows that for systems with a strongly concave property, the 
optimality gap decays much faster according to $O(\log(K)/K)$.  This improved convergence speed does not require any changes in the algorithm itself.  Indeed, the algorithm does 
not need to know whether or not the system has the strongly concave property. If the system \emph{does} have the strongly concave property, the algorithm automatically yields faster convergence without having to know the strong concavity parameter $c$.

\section{Matching converse for strongly concave structure} \label{section:log-converse} 

This section constructs a particular system (with a strongly concave curvature) 
for which all causal algorithms that do not have a-priori knowledge of the probability distribution $F_S(s)$ have optimality gaps that decay no faster 
than $\Omega(\log(K)/K)$.  This matches the $O(\log(K)/K)$ achievability result of Theorem \ref{thm:2} and shows that this convergence rate is optimal over the class of systems with strongly concave curvature.  For the proof, 
we construct a nontrivial mapping from the sequential decision problem to a related estimation problem. 
This allows use of the Bernoulli estimation 
theorem of  \cite{hazan-kale-stochastic}. This mapping technique is conceptually similar to the method recently used to prove a converse result for a different class of systems with unit timeslots in \cite{neely-NUM-converse-infocom2020}.

\subsection{System} \label{section:system-converse} 

Suppose the task type process is an 
i.i.d. Bernoulli process $\{S[k]\}_{t=0}^{\infty}$ with 
\begin{equation} \label{eq:example-S}
P[S[k]=1] = q \quad ; \quad P[S[k]=0]=1-q
\end{equation} 
where $q$ is an unknown probability.  For technical reasons, we assume 
throughout that $q \in [1/4,3/4]$. 
Every frame $k\in \{0, 1, 2, \ldots\}$ the controller observes $S[k]$ and then 
chooses $(T[k], R[k]) \in \script{D}(S[k])$, where 
\begin{equation} \label{eq:example-D} 
 \script{D}(S[k]) =  \left\{ \begin{array}{ll}
(1,1) &\mbox{ , if $S[k]=0$} \\
\{(x, 2-(2-x)^2) \in \mathbb{R}^2 : x \in [1,2] \}& \mbox{ , if $S[k]=1$} 
\end{array}
\right.
\end{equation} 
The decision structure \eqref{eq:example-D} defines a system with \emph{inflexible tasks} (type 0) and \emph{flexible tasks} (type 1).
Indeed, if $S[k]=0$ then the controller must choose  $(T[k], R[k]) = (1,1)$. 
However, if  $S[k]=1$ then the controller can choose $(T[k],R[k])$ 
as any point on the curve $(x, 2 - (2-x)^2)$ for $x \in [1,2]$. 
A higher reward is obtained for larger values of $x$, but with diminishing
returns (see Fig. \ref{fig:example-strongly}).  This particular curve  is chosen as 
a representative example with \emph{strongly concave curvature}.\footnote{The strongly concave decision curve gives rise to a set $\overline{\script{A}}$ with a strongly concave upper boundary. The $\Omega(\log(K)/K)$ converse
result in  this section can be extended to more general systems for which $\overline{\script{A}}$ has a strongly concave upper boundary.} 

\begin{figure}[htbp]
   \centering
   \includegraphics[width=4in]{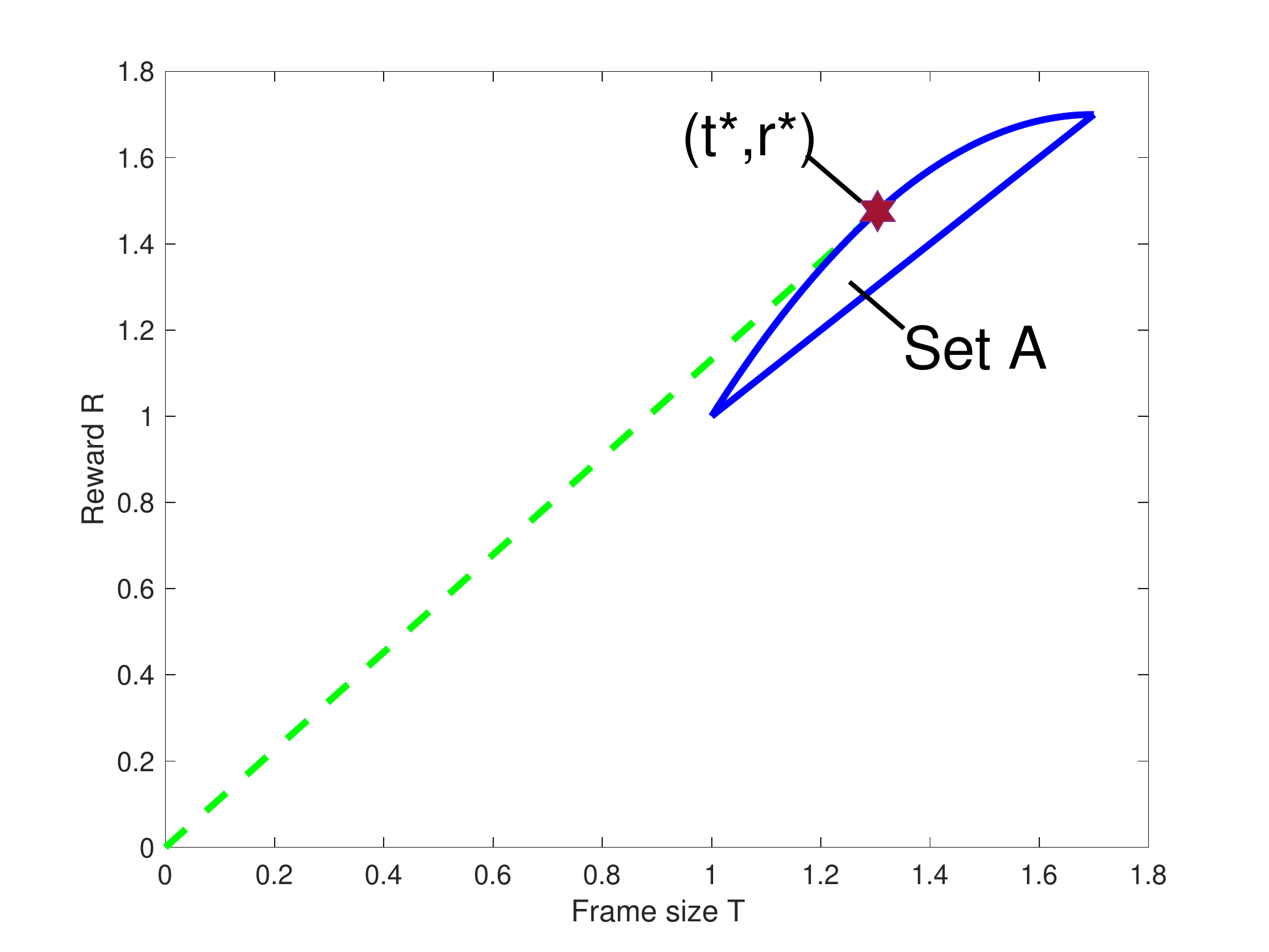} % requires the graphicx package
   \caption{The set $\script{A}$ with a strongly concave upper boundary 
   for the example of Section \ref{section:system-converse} with $q=0.7$.}
   \label{fig:example-strongly}
\end{figure}

\begin{lem} \label{eq:lem-W}  Fix $q \in [1/4,3/4]$.  Let $\script{A}$ be the set of one-shot expectations
for the example problem 
\eqref{eq:example-S}-\eqref{eq:example-D}. 
Define the sets
\begin{align*}
\script{Z} &= \left\{\left(1-q+qx, 1+q-q(2-x)^2\right) \in \mathbb{R}^2 :  x \in [1,2]\right\}\\
\script{W} &= \left\{(t,r) \in \mathbb{R}^2: t \in [1,1+q], t \leq r \leq (2t-1) - \frac{1}{q}(t-1)^2 \right\}
\end{align*}
Then 
$$ \script{A} = Conv(\script{Z}) = \script{W}$$
where $Conv(\script{Z})$ denotes the convex hull of the set $\script{Z}$. In particular, $\script{A}$ is closed and so $\overline{\script{A}} = \script{A}$. 
\end{lem}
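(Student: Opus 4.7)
The plan is to establish the chain $\script{A} = Conv(\script{Z}) = \script{W}$ in two stages and then read off closedness as a corollary. The structural observation driving the argument is that when $S[k]=0$ the decision is forced to $(1,1)$, so all randomness in the one-shot expectation comes from the choice of $x\in[1,2]$ on type-$1$ frames; this exhibits $\script{Z}$ as the image of the decision curve $\Gamma = \{(x, 2-(2-x)^2) : x \in [1,2]\}$ under the affine map $\phi(a,b) = (1-q+qa,\, 1-q+qb)$.

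For $\script{A} = Conv(\script{Z})$, the inclusion $\script{A} \subseteq Conv(\script{Z})$ uses the standard fact that the expectation of any random vector supported on a set $C \subseteq \mathbb{R}^2$ lies in $Conv(C)$. Apply this to the conditional distribution of $(T[0],R[0])$ given $S[0]=1$, which is supported on $\Gamma$; then use the law of total expectation together with $P[S[0]=0]=1-q$ to write the unconditional mean as $\phi(p)$ for some $p \in Conv(\Gamma)$. Since affine maps commute with convex hulls, this point lies in $\phi(Conv(\Gamma)) = Conv(\phi(\Gamma)) = Conv(\script{Z})$. For the reverse inclusion, each point of $\script{Z}$ is attained by a deterministic policy on type-$1$ frames, so $\script{Z} \subseteq \script{A}$, and the convexity of $\script{A}$ (already stated in the excerpt) gives $Conv(\script{Z}) \subseteq \script{A}$.

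For $Conv(\script{Z}) = \script{W}$, a direct elimination of $x$ does the job: setting $t=1-q+qx$ gives $x=1+(t-1)/q$ with range $t \in [1,1+q]$, and substitution reduces the second coordinate to $r=2t-1-(t-1)^2/q$. This function of $t$ has second derivative $-2/q<0$, so $\script{Z}$ is the graph of a strictly concave function on $[1,1+q]$ with endpoints $(1,1)$ and $(1+q,1+q)$ joined by the chord $r=t$. The convex hull of such a graph is the planar region trapped between the chord and the concave curve, which matches the description of $\script{W}$ verbatim. Closedness is then automatic: $\script{Z}$ is compact as the continuous image of $[1,2]$, so $Conv(\script{Z}) = \script{A}$ is compact in $\mathbb{R}^2$ and equals its closure. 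The only step with any real delicacy is the conditional-expectation-in-convex-hull argument; the parameter elimination and convex-hull geometry are essentially a short calculus computation.
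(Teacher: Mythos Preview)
Your argument is correct. The chain of inclusions you run is slightly different from the paper's, and the difference is worth noting. The paper proves the circular chain $Conv(\script{Z})\subseteq\script{A}\subseteq\script{W}\subseteq Conv(\script{Z})$: the middle inclusion $\script{A}\subseteq\script{W}$ is obtained by a direct sample-path computation together with Jensen's inequality applied to the explicit quadratic $(T[0]-1)^2$ (conditioning on $S[0]=1$), which produces the upper boundary inequality $r\leq(2t-1)-\tfrac{1}{q}(t-1)^2$ without ever invoking the abstract ``expectation lies in the convex hull of the support'' principle. You instead prove $\script{A}\subseteq Conv(\script{Z})$ in one stroke via that abstract principle plus the affine-map observation $\phi(Conv(\Gamma))=Conv(\phi(\Gamma))$, and only afterwards identify $Conv(\script{Z})$ with $\script{W}$ by eliminating $x$. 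Your route is cleaner and generalizes immediately to any compact decision curve $\Gamma$ in place of the particular parabola; the paper's route is more self-contained (no appeal to the support-of-expectation fact) and makes the role of Jensen explicit. Both arrive at the same place with comparable effort, and the $Conv(\script{Z})\subseteq\script{A}$ direction and the identification of the chord $r=t$ as the lower boundary are handled identically in the two arguments.
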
 

\begin{proof} 
(Part 1): We show $Conv(\script{Z})\subseteq \script{A}$. 
Fix $z \in \script{Z}$.  Then there is an $x \in [1,2]$ such that
$$ z = (1-q + qx, 1+q-q(2-x)^2) $$
The policy that chooses $(T[0],R[0])=(1,1)$ when $S[0]=0$ and $(T[0],R[0]) = (x, 2-(2-x)^2)$ when 
$S[0]=1$ yields: 
\begin{align*}
\expect{(T[0], R[0])} &= (1-q)\cdot(1,1) + q\cdot(x, 2-(2-x)^2) \\
&=(1-q + qx, 1+q-q(2-x)^2)\\
&=z
\end{align*}
and so the vector $z$ is achievable as an expectation on slot $0$, meaning $z \in \script{A}$. This holds for all $z \in \script{Z}$ and so 
$$\script{Z} \subseteq \script{A} \implies Conv(\script{Z}) \subseteq Conv(\script{A}) = \script{A}$$
where the final equality holds because the set $\script{A}$ is convex and so it is equal to its convex hull. 
%This proves $Conv(\script{Z}) \subseteq \script{A}$. 

(Part 2): We show $\script{A} \subseteq \script{W}$. 
Fix $a \in \script{A}$. By definition of  $\script{A}$ as the region of one-shot expectations, 
there is a policy that yields decisions $(T[0], R[0]) \in \script{D}(S[0])$ such that: 
$$ (\expect{T[0]}, \expect{R[0]}) = a$$
To prove $a \in \script{W}$ it suffices to show that 
\begin{align} 
&1\leq \expect{T[0]} \leq 1+q \label{eq:suffice-1} \\
& \expect{T[0]} \leq \expect{R[0]} \leq (2\expect{T[0]}-1) -\frac{1}{q}(\expect{T[0]}-1)^2 \label{eq:suffice-2}
\end{align}
Regardless of the value of $S[0]$ we have from the structure of $\script{D}(S[0])$ in \eqref{eq:example-D}: 
$$ 1 \leq T[0]\leq 1+1_{\{S[0]=1\}}$$
where $1_{\{S[0]=1\}}$ is an indicator function that is 1 if $S[0]=1$ and $0$ else. 
Taking expectations of the above inequality yields \eqref{eq:suffice-1}.  Again regardless of $S[0]$, 
we have 
\begin{equation} \label{eq:sample-path}
T[0] \leq R[0] = (2T[0]-1) - (T[0]-1)^2
\end{equation} 
Indeed, if $S[0]=0$ then $T[0]=R[0]=1$ and so \eqref{eq:sample-path} holds, 
while if $S[0]=1$ then \eqref{eq:sample-path} again holds because
by \eqref{eq:example-D} we have 
\begin{align*}
R[0] &=  2-(2-T[0])^2 \\
&\geq T[0]
\end{align*}
where the final inequality holds because $T[0] \in [1,2]$. Taking expectations of \eqref{eq:sample-path} gives
\begin{align*}
\expect{T[0]} &\leq \expect{R[0]} \\
&= (2\expect{T[0]}-1) - \expect{(T[0]-1)^2}\\
&\overset{(a)}{=}   (2\expect{T[0]}-1) - q\expect{(T[0]-1)^2|S[0]=1}\\
&\overset{(b)}{\leq} (2\expect{T[0]}-1) - q(\expect{T[0]-1|S[0]=1})^2 \\
&= (2\expect{T[0]}-1) - \frac{1}{q}(q\expect{T[0]-1|S[0]=1})^2\\
&\overset{(c)}{=} (2\expect{T[0]}-1) - \frac{1}{q}(\expect{T[0] - 1})^2
\end{align*}
where (a) holds because $T[0]=1$ whenever $S[0]=0$; (b) holds by Jensen's inequality applied to the conditional expectation; 
(c) holds because $\expect{T[0]-1} = q\expect{T[0]-1|S[0]=1}$ (since 
$T[0]-1=0$ if $S[0]=0$).  This establishes \eqref{eq:suffice-2} and thus proves $\script{A} \subseteq \script{W}$. 

(Part 3): We show $\script{W} \subseteq Conv(\script{Z})$. 
This can be shown by noting $\script{W}$ is the convex hull of the following set of points
$$ \{(t, (2t-1) - \frac{1}{q}(t-1)^2) : t \in [1,1+q]\}$$
and these points coincide with the set $\script{Z}$. 
\end{proof} 

\begin{lem} \label{lem:theta-star-log} The point $(t^*, r^*)\in \script{A}$ that maximizes $r/t$ over all points $(t,r) \in \script{A}$ 
is achieved on the upper boundary of $\script{A}$ (see Fig. \ref{fig:example-strongly}) and satisfies: 
\begin{align}
t^* &= \sqrt{1+q} \label{eq:t-star}  \\
r^* &=  \frac{2(q+1)\left(-1+\sqrt{1+q}\right)}{q} \label{eq:r-star} \\
\theta^* &= \frac{r^*}{t^*} = 2 - \frac{2}{q}\left(-1+\sqrt{1+q}\right) \label{eq:example-theta} 
\end{align}
Further, by strong concavity of the upper boundary of set $\script{A}$ we have
\begin{equation} \label{eq:bound-t-r}
 r \leq \theta^* t - \frac{1}{q}(t-t^*)^2 \quad \forall (t,r) \in \script{A}
 \end{equation} 
\end{lem}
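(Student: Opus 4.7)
The plan is to leverage the explicit characterization $\overline{\script{A}} = \script{A} = \script{W}$ from Lemma \ref{eq:lem-W}, which reduces the entire statement to one-variable calculus on $t \in [1, 1+q]$. For each fixed $t \in [1, 1+q]$ the largest $r$ with $(t,r) \in \script{W}$ is $g(t) := (2t-1) - \frac{1}{q}(t-1)^2$, and since $r/t$ is increasing in $r$ for $t>0$, the ratio is maximized on the upper boundary of $\script{W}$. So I would reduce $\theta^*$ to maximizing $\phi(t) := g(t)/t$ over $t \in [1, 1+q]$.

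For the location of the maximum, I would differentiate $\phi$. Writing $\phi(t) = 2 - \frac{1}{t} - \frac{1}{q}\frac{(t-1)^2}{t}$ and setting $\phi'(t) = 0$, the condition simplifies to $t^2 = 1+q$, so $t^* = \sqrt{1+q}$. Because $q \in [1/4, 3/4]$, one checks at once that $1 \leq t^* \leq 1+q$, so the critical point is admissible; a second-derivative (or concavity) check confirms it is a maximum. Substituting $t^*$ into $g(t^*)$, using $(t^*-1)^2 = (1+q) - 2\sqrt{1+q} + 1$ and simplifying, produces the stated closed form \eqref{eq:r-star} for $r^*$, and dividing by $t^*$ yields \eqref{eq:example-theta} for $\theta^*$.

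For the strongly concave bound \eqref{eq:bound-t-r}, my plan is to prove the stronger statement that the upper boundary $g(t)$ is \emph{identically equal} to $\psi(t) := \theta^* t - \frac{1}{q}(t-t^*)^2$, not merely dominated by it. Both $g$ and $\psi$ are quadratics in $t$ with the same leading coefficient $-1/q$, so their difference $g - \psi$ is affine. Both take the value $r^*$ at $t^*$ (for $\psi$ this is $\theta^* t^* = r^*$; for $g$ this is the definition of $r^*$), and both have derivative $\theta^*$ at $t^*$: for $\psi$ this is immediate, while for $g$ it follows from rearranging $\phi'(t^*)=0$ as $g'(t^*) = g(t^*)/t^* = \theta^*$, which is the familiar ``ratio-maximizer'' geometric fact that the line through the origin of slope $\theta^*$ is tangent to the upper boundary at $(t^*, r^*)$. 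An affine function whose value and derivative both vanish at a single point is identically zero, hence $g \equiv \psi$. Since every $(t,r) \in \script{A} = \script{W}$ satisfies $r \leq g(t)$, the inequality \eqref{eq:bound-t-r} follows.

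The only real obstacle I anticipate is bookkeeping: reconciling the elementary calculus with the somewhat unsimplified closed form $\frac{2(q+1)(-1+\sqrt{1+q})}{q}$ given for $r^*$. The essential content of the lemma — the curvature identity — is very clean and essentially forced: two downward parabolas with the same leading coefficient that share both a point and their tangent at that point must coincide, and here the point $(t^*, r^*)$ and the tangent slope $\theta^*$ are both built in by construction.
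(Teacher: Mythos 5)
Your proposal is correct and follows essentially the same route as the paper: reduce to the upper boundary $r=(2t-1)-\tfrac{1}{q}(t-1)^2$ via Lemma \ref{eq:lem-W}, locate $t^*=\sqrt{1+q}$ by elementary calculus, and establish \eqref{eq:bound-t-r} by showing the boundary parabola \emph{coincides} with $\theta^* t - \tfrac{1}{q}(t-t^*)^2$ (the paper matches leading and constant coefficients plus the value at $(t^*,r^*)$, while you match value and tangent slope at $t^*$ — an equivalent way to identify two quadratics with the same leading coefficient). You have simply filled in the computations the paper omits "for brevity."
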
 

\begin{proof}  By the previous lemma, the upper boundary of $\script{A}$ consists of points $(t,r)$ such that 
\begin{equation} \label{eq:parameterize1}
 r = (2t-1) - \frac{1}{q}(t-1)^2 \quad, t \in [1,1+q]
 \end{equation}
It is not difficult to show the maximizer of $r/t$ over all $(t,r) \in \script{A}$ is on this upper boundary and is given by the point $(t^*,r^*)$ specified in \eqref{eq:t-star}-\eqref{eq:example-theta} (details omitted for brevity). The final inequality \eqref{eq:bound-t-r} hold because the upper boundary curve \eqref{eq:parameterize1} can be reparameterized by
$$ r = \theta^* t - \frac{1}{q}(t-t^*)^2, t \in [1, 1+q]$$
which is a quadratic that matches the squared term and constant term in the quadratic 
\eqref{eq:parameterize1} and also matches at the point $(t^*,r^*)$.  
\end{proof} 

\subsection{Causal and measurable algorithms} \label{section:causal}

Our converse result considers algorithms that choose $(T[k], R[k]) \in \script{D}(S[k])$ every 
frame $k$ in a way that is \emph{causal} (so the algorithm has no knowledge of the future) 
and \emph{probabilistically measurable} (so probability distributions and expectations are well
defined).     
For each $k \in \{0, 1, 2, \ldots\}$, define $H[k]$ as the system history up to but not including 
frame $k$: 
$$ H[k] = (S[0], S[1], \ldots, S[k-1]) \quad \forall k \in \{1, 2, 3, \ldots\}$$
where $H[0]$ is formally defined as NULL (since there is no history before frame $0$). 
On frame $k$, a general algorithm has no choice if it observes $S[k]=0$ (since it must use $(T[k],R[k])=(1,1)$ in that case) but must select $(T[k], R[k]) \in \script{D}(1)$ if it observes $S[1]$. A general causal and measurable algorithm must make this (potentially randomized) 
decision based only on the history $H[k]$ observed on frames $\{0, 1, \ldots, k-1\}$. 

It suffices to assume that on frame $k$, the algorithm makes decisions as a deterministic function 
of $(H[k], U)$ where $U$ is uniformly distributed in $[0,1)$ and is independent of $\{S[k]\}_{k=0}^{\infty}$.  Let $\{f_k\}_{k=0}^{\infty}$ 
be a sequence of  functions of the following form: 
\begin{align*}
 f_k:\{0,1\}^{k} \times [0,1] \rightarrow \script{D}(1) \quad \forall k \in \{1, 2, 3, \ldots\} 
 \end{align*}
 where $\{0,1\}^{0}$ is formally defined as NULL so that $f_0(u)$ depends only on 
 $u \in [0,1]$. These functions $\{f_k\}_{k=0}^{\infty}$ define control decisions of a general causal and measurable algorithm on each frame $k \in \{0, 1, 2, \ldots\}$: 
 \begin{align}
 (T[k], R[k]) = \left\{ \begin{array}{ll}
(1,1) &\mbox{ , if $S[k]=0$} \\
f_k(H[k], U) & \mbox{ , if $S[k]=1$} 
\end{array}
\right.   \label{eq:causal-TR} 
 \end{align}
 The functions $\{f_k\}_{k=0}^{\infty}$ are assumed to be \emph{probabilistically measurable} on the  
 sigma algebra $\sigma(U; \{S[k]\}_{k=0}^{\infty})$.  Since $\script{D}(1)$ is a bounded set, the corresponding
 expectations $(\expect{T[k]}, \expect{R[k]})$ are well defined and finite, as are the conditional expectations
 given $(H[k], U)$. 
 We say that decisions $(T[k], R[k]) \in \script{D}(S[k])$ for all $k \in \{0, 1, 2, \ldots\}$ are \emph{causal and measurable} if they come from a sequence of deterministic functions $\{f_k\}_{k=0}^{\infty}$ with the above structure.  
 
 This algorithm structure is not restrictive: From the single random variable  $U$, 
 we can construct an infinite sequence of i.i.d. uniformly 
 distributed random variables $\{U_i\}_{i=1}^{\infty}$, where each $U_i$ is a deterministic function of $U$.\footnote{This can be done by writing $U=\sum_{i=1}^{\infty} X_i 10^{-i}$ in the unique base-10 expansion where $X_i \in \{0, \ldots, 9\}$ is the $i$th digit of the expansion and  $\{X_i\}_{i=1}^{\infty}$ does not have an infinite tail of 9s, 
 and defining $U_n$ for each $n \in \{1, 2, 3 ,\ldots\}$ by  $U_n = \sum_{i=1}^{\infty} X_{g(n,i)}10^{-i}$ 
where $g:\mathbb{N}^2\rightarrow\mathbb{N}$ is any bijection.} 
This allows the controller to 
make as many calls to a random number generator as needed 
(assuming an at-most countably infinite number of such calls). 

\begin{lem} \label{lem:in-A}
Fix $q \in [1/4, 3/4]$ and consider any causal and measurable decisions $(T[k], R[k]) \in \script{D}(S[k])$ 
for $k \in \{0, 1, 2, \ldots\}$.  For each  $k \in \{0, 1, 2, \ldots\}$  we have: 
\begin{equation} \label{eq:in-A1} 
\expect{(T[k], R[k]) | H[k]} \in \script{A}
\end{equation} 
Furthermore,  
\begin{equation} \label{eq:in-A2}
\expect{R[k]|H[k]} \leq \theta^* \expect{T[k]|H[k]} - \frac{1}{q}(\expect{T[k]|H[k]} - t^*)^2
\end{equation} 
where $t^*=\sqrt{1+q}$ and $\theta^*$ satisfies \eqref{eq:example-theta}. 
\end{lem}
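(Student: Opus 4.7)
The plan is to reduce both parts of the lemma to a single structural fact: conditional on any realization of the history $H[k]$, the randomized rule that the algorithm uses on frame $k$ looks exactly like a stationary randomized one-shot policy on frame $0$. Once that is established, membership in $\script{A}$ is automatic from the definition of $\script{A}$ as the set of one-shot expectations, and the quadratic inequality is just an application of \eqref{eq:bound-t-r} from Lemma \ref{lem:theta-star-log}.

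First I would argue \eqref{eq:in-A1}. Fix any frame $k$ and any realization $h$ of $H[k]$. By construction $\{S[j]\}_{j=0}^{\infty}$ is i.i.d.\ and $U$ is independent of $\{S[j]\}_{j=0}^{\infty}$, so $S[k]$ and $U$ are jointly independent of $H[k]$, and in particular the conditional joint distribution of $(S[k],U)$ given $H[k]=h$ coincides with the unconditional joint distribution of $(S[0],U)$. Under this conditional distribution the algorithm selects $(T[k],R[k])=(1,1)$ when $S[k]=0$ and $(T[k],R[k])=f_k(h,U)\in\script{D}(1)$ when $S[k]=1$. Since $h$ is fixed, $f_k(h,\cdot):[0,1)\to\script{D}(1)$ is a deterministic measurable map and so defines a legitimate conditional distribution for choosing $(T[0],R[0])\in\script{D}(S[0])$ given $S[0]$. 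The resulting one-shot expectation is precisely $\expect{(T[k],R[k])\mid H[k]=h}$, which therefore lies in $\script{A}$ by the definition of $\script{A}$ in Section~\ref{section:structure-assumptions}. Applied at almost every $h$ this gives \eqref{eq:in-A1}.

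Part \eqref{eq:in-A2} then follows immediately. By Lemma \ref{eq:lem-W} the set $\script{A}$ is closed and equal to $\script{W}$, so $\overline{\script{A}}=\script{A}$; in particular every point $(t,r)\in\script{A}$ satisfies the strongly concave upper bound $r\le \theta^*t-\frac{1}{q}(t-t^*)^2$ stated in \eqref{eq:bound-t-r}. Applying this bound to the point $(\expect{T[k]\mid H[k]},\expect{R[k]\mid H[k]})\in\script{A}$ gives \eqref{eq:in-A2}, with the inequality holding for almost every realization of $H[k]$ (equivalently, with probability $1$).

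The main obstacle is the measure-theoretic bookkeeping in the first step: one must verify that the functions $f_k$, which are only assumed measurable on $\sigma(U;\{S[j]\}_{j=0}^{\infty})$, induce bona fide regular conditional distributions on $\script{D}(1)$ given $H[k]$, and that the conditional expectation $\expect{(T[k],R[k])\mid H[k]}$ can legitimately be identified with the one-shot expectation of a stationary randomized frame-$0$ policy. This is standard once one invokes the independence of $S[k]$ and $U$ from $H[k]$ together with Fubini, but it is the only nontrivial ingredient; after it is in place both conclusions are one-line consequences of Lemmas \ref{eq:lem-W} and \ref{lem:theta-star-log}.
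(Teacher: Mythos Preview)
Your proposal is correct and follows essentially the same route as the paper. The paper's own proof is a two-line sketch: it notes that $\overline{\script{A}}=\script{A}$ and that $S[k]$ is independent of $(U,H[k])$, then invokes the general Appendix result (Theorem~\ref{thm:measure-theory}) to get \eqref{eq:in-A1}, and finally substitutes into \eqref{eq:bound-t-r} for \eqref{eq:in-A2}. You instead spell out the direct argument---conditioning on $H[k]=h$ and recognizing $f_k(h,\cdot)$ as a bona fide one-shot randomized policy on frame $0$---which is exactly what the Appendix theorem encapsulates in this special case. One small advantage of your version: by treating $U$ as the internal randomization of the one-shot policy you land directly on $\expect{(T[k],R[k])\mid H[k]}\in\script{A}$, whereas the Appendix theorem literally yields membership of $\expect{(T[k],R[k])\mid U,H[k]}$ in $\overline{\script{A}}$ and an additional tower/convexity step is implicit to drop the conditioning on $U$.
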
 

\begin{proof} 
The fact \eqref{eq:in-A1} is similar to Lemma \ref{lem:tech-intro} and is formally
proven by noting that $\overline{\script{A}}=\script{A}$ and 
$S[k]$ is independent of $(U, H[k])$, and then  
applying the general theorem in the Appendix (details omitted for brevity). 
Substituting \eqref{eq:in-A1} into  \eqref{eq:bound-t-r} proves \eqref{eq:in-A2}.
 \end{proof}

\subsection{The Bernoulli estimation theorem from \cite{hazan-kale-stochastic}} 

Let $\{g_k\}_{k=1}^{\infty}$ be an infinite sequence of deterministic 
functions of the form:
\begin{align*}
g_k :\{0,1\}^{k}   \rightarrow [0,1] \quad \forall k \in \{1, 2, 3, \ldots\} 
\end{align*}
and  for each $k \in \{1, 2, 3, \ldots\}$ the 
function $g_k(s_0, \ldots, s_{k-1})$ maps a binary-valued sequence $(s_0, \ldots, s_{k-1})$ 
to a real number in the interval $[0,1]$.  Let $\{S[k]\}_{k=0}^{\infty}$ be an i.i.d. sequence of Bernoulli random variables with
parameter $q = P[S[k]=1]$.  Assume that $q$ is an unknown parameter in the interval $[1/4,3/4]$. 
The functions $g_k$ shall
be called \emph{estimation functions} because they can map the first $k$ observations of the Bernoulli random variables 
to a (deterministic)  estimate $G[k]$ of the $q$ parameter via: 
\begin{align}
G[k] = g_k(S[0], S[1], \ldots, S[k-1]) \quad \forall k \in \{1, 2, 3, \ldots\}  \label{eq:Gk} 
\end{align}
The following theorem from \cite{hazan-kale-stochastic} 
provides a lower bound on the mean square error for any sequence of estimation functions.\footnote{The
result in \cite{hazan-kale-stochastic} also applies to randomized estimation functions.  This paper shall only need
the result for deterministic estimation functions.} 

\begin{thm} \label{thm:hazan} (Bernoulli Estimation  \cite{hazan-kale-stochastic})
For any sequence of estimation functions $\{g_k\}_{k=0}^{\infty}$ as defined above, 
there exists a probability $q \in [1/4, 3/4]$ such that if $\{S[k]\}_{k=0}^{\infty}$ is an i.i.d. Bernoulli sequence with parameter $q$,  then for all positive integers $K$ we have 
\begin{equation} \label{eq:bound} 
 \sum_{k=1}^{K} \expect{(q-G[k])^2} \geq \Omega(\log(K))
 \end{equation} 
where the random variables $G[k]$ are defined in \eqref{eq:Gk} for each $k \in \{1, 2, \ldots\}$. 
\end{thm}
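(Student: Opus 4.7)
My plan is to establish this Bernoulli lower bound via an information-theoretic Bayesian argument, specifically a van Trees (Bayesian Cram\'er--Rao) inequality applied to a smooth prior on $[1/4,3/4]$. The intuition is that $k$ i.i.d.\ samples from $\mathrm{Bern}(q)$ localize $q$ only to accuracy $\Theta(1/\sqrt{k})$ when $q$ is bounded away from $0$ and $1$, so the per-round mean-square error is $\Theta(1/k)$ on average, and summing $\Theta(1/k)$ over $k=1,\dots,K$ produces the target $\Theta(\log K)$ rate. The subtle part is upgrading the resulting averaged bound into the single-$q$, all-$K$ statement the theorem claims.

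For the main technical step I would fix a smooth, compactly supported prior density $\pi$ on $(1/4,3/4)$ vanishing at the endpoints (for instance a scaled bump function) with finite Fisher-information constant $J(\pi)=\int(\pi')^2/\pi$. The Bernoulli likelihood has per-sample Fisher information $I(q)=1/(q(1-q))\le 16/3$ throughout $[1/4,3/4]$. Applying van Trees to the $k$-fold product Bernoulli model and to the deterministic estimator $G[k]=g_k(S[0],\dots,S[k-1])$ gives
\begin{equation*}
\int_{1/4}^{3/4}\pi(q)\,\mathbb{E}_q\!\left[(G[k]-q)^2\right]dq\;\ge\;\frac{1}{k\int\pi I+J(\pi)}\;\ge\;\frac{c}{k+1}
\end{equation*}
for a constant $c>0$ depending only on $\pi$. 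Summing over $k=1,\dots,K$ and writing $\Phi_K(q)=\sum_{k=1}^K\mathbb{E}_q[(G[k]-q)^2]$ yields the averaged Bayes-risk bound $\int\pi(q)\Phi_K(q)\,dq\ge c'\log K$ for every $K\ge 2$.

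To extract a single witness $q^\star\in[1/4,3/4]$ I would pass to a dyadic subsequence $K_n=2^n$ and try to show that the sets $B_n=\{q:\Phi_{K_n}(q)\ge(c'/2)\log K_n\}$ have non-trivial asymptotic overlap, so that some $q^\star$ lies in infinitely many $B_n$. Combined with the monotonicity of $\Phi_K$ in $K$, any such $q^\star$ automatically satisfies $\Phi_K(q^\star)\ge\Omega(\log K)$ for all sufficiently large $K$. If this direct extraction is too weak, the fallback I would pursue is an Assouad-style argument with a multi-scale hypothesis family $\{q_{\mathbf b}:\mathbf b\in\{0,1\}^J\}$, where bit $b_j$ controls a perturbation of magnitude $\epsilon_j\sim 2^{-j/2}$ at round-scale $k\sim 2^j$. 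Assouad's lemma would then produce a single $\mathbf b^\star$ whose corresponding $q^\star$ beats the estimator at every scale simultaneously, yielding the desired single-$q$ conclusion.

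The main obstacle is exactly this last step: the plain van Trees / averaging approach only delivers per-$K$ witnesses whose $\pi$-measure can shrink like $(\log K)/K$, so a bare Borel--Cantelli argument does not suffice to pin down a single $q$. The Assouad multi-scale fallback is probably closer in spirit to the Hazan--Kale proof but requires care to keep all hypothesis parameters inside $[1/4,3/4]$ and to verify that the cumulative KL divergences across scales remain bounded. The remaining technical points---verifying van Trees for the discrete Bernoulli likelihood with a continuous prior, and the boundary behavior of $\pi$---are standard.
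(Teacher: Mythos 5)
The paper does not prove this theorem; it imports it from \cite{hazan-kale-stochastic}, so your proposal must stand on its own. Your van Trees step is fine: it correctly yields the \emph{Bayes-averaged} bound $\int_{1/4}^{3/4}\pi(q)\sum_{k=1}^{K}\mathbb{E}_q[(G[k]-q)^2]\,dq \geq c'\log K$. But that is a strictly weaker statement than the theorem, and the step that upgrades it to a single $q$ working for all $K$ is exactly where the proposal has a genuine gap. Your primary extraction route (overlap of the sets $B_n$ along a dyadic subsequence) provably cannot work in general: the averaged bound together with the trivial pointwise bound $\Phi_{K}(q)\leq K$ only forces $\pi(B_n)=\Omega(\log K_n/K_n)$, and $\sum_n \log(K_n)/K_n<\infty$ along $K_n=2^n$, so Borel--Cantelli points in the \emph{wrong} direction --- an adversarially designed estimator can arrange that almost every $q$ lies in only finitely many $B_n$. (The danger is real: an estimator can be extremely accurate for most $q$ at the price of a small, $K$-dependent bad set, which is precisely why the theorem is nontrivial.) You acknowledge this, but acknowledging that your main route fails does not discharge the obligation.

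Your fallback --- a multi-scale, nested-perturbation construction with scales $\epsilon_j\sim 2^{-j/2}$ over epochs of rounds $k\sim 2^j$ --- is indeed the right idea and is in the spirit of the argument in \cite{hazan-kale-stochastic}, but it is only named, not executed, and the execution is the entire content of the theorem. The essential missing pieces are: (i) the perturbations at successive scales must be \emph{nested} so that the adversarial choices at all scales are consistent with a single limiting $q^\star\in[1/4,3/4]$ (vanilla Assouad over a product hypercube gives a max over $2^J$ hypotheses of a \emph{sum} of coordinate errors, not one hypothesis that is bad at every scale simultaneously); (ii) one must verify that distinguishing $q$ from $q\pm c\,2^{-j/2}$ using the $\sim 2^j$ samples available in epoch $j$ has KL divergence $O(1)$, so that Le Cam/Pinsker forces an MSE contribution of $\Omega(2^{-j})$ per round, hence $\Omega(1)$ per epoch, for at least one branch; and (iii) one must show that the branch chosen at scale $j$ does not destroy the $\Omega(1)$ contributions already secured at scales $j'<j$. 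Until these are carried out, the proposal establishes only the averaged lower bound, not the stated single-$q$, all-$K$ result.
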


\subsection{Completing the converse}

\begin{lem} Define $\phi:[1/4,3/4] \rightarrow \mathbb{R}$ by 
$$ \phi(q) = \frac{-1+\sqrt{1+q}}{q}$$
Then 

a)  $\phi$ is continuous and strictly decreasing with minimim and maximum values
\begin{align*}
\phi_{min} &= \phi(3/4) \approx 0.430501  \\
\phi_{max} &= \phi(1/4) \approx 0.472136
\end{align*}

b) There is a continuous  
inverse function $\phi^{-1}:[\phi_{min}, \phi_{max}]\rightarrow [1/4, 3/4]$ 
that satisfies 
$$ \phi(\phi^{-1}(y)) = y \quad \forall y \in [\phi_{min}, \phi_{max}]$$

c) The derivative of $\phi$ exists for all $q \in (1/4, 3/4)$ and 
$$ |\phi'(q)| \geq \beta \quad \forall q \in (1/4, 3/4)$$
where $\beta = |\phi'(3/4)| \approx 0.0700485$.

d) We have 
\begin{equation} \label{eq:last-part} 
|\phi(q) - \phi(u)| \geq \beta |q-u| \quad \forall u,q \in [1/4, 3/4]
\end{equation} 
\end{lem}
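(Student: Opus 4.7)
The plan is to first simplify $\phi$ by rationalizing the numerator, which makes every part of the lemma essentially mechanical. Multiplying numerator and denominator of $\phi(q)=\frac{-1+\sqrt{1+q}}{q}$ by $1+\sqrt{1+q}$ gives
\[
\phi(q) \;=\; \frac{(1+q)-1}{q\bigl(1+\sqrt{1+q}\bigr)} \;=\; \frac{1}{1+\sqrt{1+q}},
\]
which is the working form I would use throughout.

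For part (a), since $q\mapsto 1+\sqrt{1+q}$ is continuous, positive, and strictly increasing on $[1/4,3/4]$, its reciprocal $\phi$ is continuous and strictly decreasing. Hence the maximum is $\phi(1/4)=\frac{1}{1+\sqrt{5/4}} = \frac{2}{2+\sqrt{5}}$ and the minimum is $\phi(3/4)=\frac{1}{1+\sqrt{7/4}} = \frac{2}{2+\sqrt{7}}$; a quick numerical check matches the claimed values $\phi_{\min}\approx 0.430501$ and $\phi_{\max}\approx 0.472136$. Part (b) then follows immediately: a continuous strictly monotone function on a compact interval has a continuous inverse on its image, which by the intermediate value theorem is the interval $[\phi_{\min},\phi_{\max}]$.

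For part (c), I would differentiate the simplified form:
\[
\phi'(q) \;=\; -\frac{1}{\bigl(1+\sqrt{1+q}\bigr)^{2}}\cdot\frac{1}{2\sqrt{1+q}},
\qquad
|\phi'(q)| \;=\; \frac{1}{2\sqrt{1+q}\,\bigl(1+\sqrt{1+q}\bigr)^{2}}.
\]
The denominator is continuous and strictly increasing in $q$ on $[1/4,3/4]$, so $|\phi'|$ is strictly decreasing there; its infimum on $(1/4,3/4)$ is attained in the limit $q\uparrow 3/4$ and equals $|\phi'(3/4)| = \beta$. Numerically this gives $\beta = \bigl[\sqrt{7}\,(1+\sqrt{7/4})^{2}\bigr]^{-1}\approx 0.0700485$, matching the claim. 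Therefore $|\phi'(q)|\ge \beta$ for all $q\in(1/4,3/4)$.

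For part (d), I would apply the mean value theorem: for any $u,q\in[1/4,3/4]$ with $u\neq q$ there exists $c$ strictly between them such that $\phi(q)-\phi(u) = \phi'(c)(q-u)$, and part (c) gives $|\phi'(c)|\ge\beta$, proving \eqref{eq:last-part}; the case $u=q$ is trivial. There is no real obstacle here — the only nonroutine step is spotting the rationalization that collapses $\phi$ to $1/(1+\sqrt{1+q})$, after which monotonicity, invertibility, and the derivative bound are all one-line consequences.
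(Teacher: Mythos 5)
Your proof is correct and follows essentially the same route as the paper, which dispatches (a)--(c) as ``basic analysis'' and proves (d) via the mean value theorem exactly as you do. The rationalization $\phi(q)=1/(1+\sqrt{1+q})$ is a clean way to carry out the analysis the paper leaves implicit, and your numerical values for $\phi_{\min}$, $\phi_{\max}$, and $\beta$ all check out.
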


\begin{proof} 
Parts (a) and (c) follow by basic analysis on the function $\phi$. Part (b) follows immediately 
from (a).  To prove (d), without loss of generality assume  $1/4 \leq u < q \leq 3/4$. 
By the mean value theorem, there is a  
point $x \in (u,q)$ such that 
 $$ \frac{\phi(q)-\phi(u)}{q-u} = \phi'(x) $$
 and so 
 $$ \left|\frac{\phi(q)-\phi(u)}{q-u}\right| = |\phi'(x)|\geq \beta $$
\end{proof}

\begin{thm} 
Let $\{f_k\}_{k=0}^{\infty}$ be any sequence of decision functions that define a causal and measurable algorithm according to \eqref{eq:causal-TR}. 
There is a parameter $q \in [1/4, 3/4]$ such that using these decision functions with an i.i.d. Bernoulli-$q$ process
$\{S[k]\}_{k=0}^{\infty}$ and with an independent uniform random variable $U$, the resulting 
(causal and measurable) decisions $(T[k], R[k]) \in \script{D}(S[k])$ satisfy 
$$ \frac{\sum_{k=0}^{K-1} \expect{R[k]}}{\sum_{k=0}^{K-1} \expect{T[k]}} \leq \theta^* - \Omega\left(\frac{\log(K)}{K}\right)$$
where $\theta^*$ is the optimal ratio in \eqref{eq:example-theta}. 
In particular, no algorithm can have  error that decays faster than $\Omega(\log(K)/K)$. 
\end{thm}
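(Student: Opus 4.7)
The plan is to reduce the converse to the Bernoulli estimation theorem (Theorem~\ref{thm:hazan}) by extracting from the algorithm a sequence of deterministic estimators of the unknown parameter $q$, then to convert the resulting $\Omega(\log K)$ lower bound on cumulative mean-squared estimation error into an $\Omega(\log(K)/K)$ lower bound on the optimality gap through the strong-concavity inequality \eqref{eq:in-A2} of Lemma~\ref{lem:in-A}. The role of the $\phi$ function established in the preceding lemma is to bridge these two objects: the algorithm's decisions naturally estimate $\phi(q)$ rather than $q$ directly, and the bi-Lipschitz bound \eqref{eq:last-part} is exactly what translates a mean-squared-error bound for $q$ into one for $\phi(q)$ and back.

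First I would isolate from the algorithm a $q$-free deterministic function of history. Given the causal measurable representation \eqref{eq:causal-TR}, define $X[k] = \expect{T[k] \mid H[k], S[k]=1}$, which equals the expectation of the time-coordinate of $f_k(H[k],U)$ taken over $U$ alone; this is a deterministic function $X_k:\{0,1\}^{k}\to[1,2]$ whose functional form does not depend on $q$. Unpacking $\tau[k] := \expect{T[k]\mid H[k]} = (1-q)+qX[k]$ and using $t^*(q)-1 = q\phi(q)$, a one-line algebraic computation yields the identity
$$\tau[k] - t^*(q) \;=\; q\bigl(X[k] - 1 - \phi(q)\bigr),$$
so $X[k]-1$ is the natural algorithmic estimator of $\phi(q)$. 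I would then clip $X[k]-1$ onto $[\phi_{min},\phi_{max}]$, call the result $Y[k]$, and set $G[k]=\phi^{-1}(Y[k])\in[1/4,3/4]$. Because $Y[k]$ and hence $G[k]$ are fixed functions of $(S[0],\ldots,S[k-1])$ whose forms do not depend on $q$, the sequence $\{G[k]\}$ is a legitimate estimator sequence for Theorem~\ref{thm:hazan}, which then supplies some adversarial $q\in[1/4,3/4]$ for which $\sum_{k=1}^{K}\expect{(q-G[k])^2}\geq \Omega(\log K)$.

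Finally I would chain the bounds. Since $\phi(q)\in[\phi_{min},\phi_{max}]$, the clipping step only contracts distances to $\phi(q)$, so $(X[k]-1-\phi(q))^2 \geq (Y[k]-\phi(q))^2$; the bi-Lipschitz bound \eqref{eq:last-part} then gives $(Y[k]-\phi(q))^2 = (\phi(G[k])-\phi(q))^2 \geq \beta^2 (G[k]-q)^2$. Combined with the identity above and $q\geq 1/4$, this yields $(\tau[k]-t^*)^2 \geq (\beta^2/16)(G[k]-q)^2$. Taking expectations in \eqref{eq:in-A2}, summing over $k\in\{0,\ldots,K-1\}$, substituting the Bernoulli lower bound, and dividing by $\sum_{k=0}^{K-1}\expect{T[k]}\leq 2K$ produces the claimed $\Omega(\log(K)/K)$ lower bound. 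The main obstacle is the very first step: engineering an estimator that is simultaneously a $q$-free deterministic function of $\{0,1\}^{k}$ (so that the adversarial $q$ in Theorem~\ref{thm:hazan} is chosen against a \emph{fixed} estimator sequence) and tight enough against the strong-concavity bound that no $\log K$ factor is lost in the transfer. Using $\tau[k]$ itself would violate the former because $\tau[k]$ is a $q$-dependent affine function of $X[k]$; and using $X[k]^2-1$ as a direct estimator of $q$ would fail at the algorithm optimum since the optimal $X[k]$ is $1+\phi(q)$, not $\sqrt{1+q}$. The $\phi$-based construction is precisely what reconciles these two competing requirements.
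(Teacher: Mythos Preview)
Your proposal is correct and follows essentially the same route as the paper's proof: you define the same $q$-free estimator $X[k]=\expect{T[k]\mid H[k],S[k]=1}$ (the paper calls it $z_k(H[k])$), derive the same identity $\tau[k]-t^*=q(X[k]-1-\phi(q))$, clip and invert through $\phi^{-1}$ to obtain a legitimate estimator sequence for Theorem~\ref{thm:hazan}, and then chain the strong-concavity bound \eqref{eq:in-A2} with the bi-Lipschitz inequality \eqref{eq:last-part} and $q\in[1/4,3/4]$ before dividing by $\sum_k\expect{T[k]}\leq 2K$. The only cosmetic difference is that the paper keeps the factor $1/q$ from \eqref{eq:in-A2} together with the $q^2$ from the identity to get $q(X[k]-1-\phi(q))^2\geq(\beta^2/4)(G[k]-q)^2$, whereas you bound $(\tau[k]-t^*)^2$ first and obtain a slightly weaker constant; this is immaterial for the $\Omega(\log(K)/K)$ conclusion.
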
 

\begin{proof} 
Define for each $k \in \{0, 1, 2, \ldots\}$ and each $h_k \in \{0,1\}^{k}$: 
$$z_k(h_k) = \expect{T[k]|S[k]=1, H[k]=h_k}$$
Define $g_k:\{0,1\}^{k}\rightarrow [0,1]$
 by 
 $$ g_k(h_k) = \phi^{-1}\left([z_k(h_k)-1]_{\phi_{min}}^{\phi_{max}}\right)$$
 Since $\phi^{-1}:[\phi_{min}, \phi_{max}]\rightarrow [1/4, 3/4]$, these $\{g_k\}$ functions can be viewed as estimation functions. 
 Define 
 \begin{equation} \label{eq:G1} 
  G[k] = g_k(H[k]) \quad \forall k \in \{1, 2, 3, \ldots\} 
  \end{equation} 
 and so 
 \begin{equation} \label{eq:G2} 
 G[k] = \phi^{-1}\left([  z_k(H_k)-1  ]_{\phi_{min}}^{\phi_{max}} \right)
 \end{equation} 
 Observe that for $k \in \{1, 2, 3, \ldots\}$, 
 the $g_k$ functions can be viewed as estimation functions for the parameter $q$, with $G[k]$ the corresponding estimator based on the past $k$ observations, because they have the general structure specified by the Bernoulli estimation theorem (Theorem \ref{thm:hazan}). Indeed $G[k]$ maps $H[k]=(S[0], \ldots, S[k-1])$ to the unit interval $[0,1]$, and this map is measurable because it is a composition with the measurable $z_k(h)$ function, the continuous projection to the interval $[\phi_{min}, \phi_{max}]$, and the continuous inverse function $\phi^{-1}$.  Hence, if we can expression the ratio of expectations in question as a sum of the mean squared error between $G[k]$ and $q$, we can apply Theorem \ref{thm:hazan}.
 
Fix $k \in \{0, 1, 2, \ldots\}$. We have for each $h_k \in \{0,1\}^{k}$: 
\begin{align*}
\expect{R[k]|H[k]} &\overset{(a)}{\leq} \theta^*\expect{T[k]|H[k]} - \frac{1}{q}\left(\expect{T[k]|H[k]} - t^*\right)^2  \\
&\overset{(b)}= \theta^*\expect{T[k]|H[k]} - \frac{1}{q} \left((1-q) + qz_k(H[k]) - t^*\right)^2\\
&= \theta^*\expect{T[k]|H[k]} - q\left(z_k(H[k]) - 1 - \frac{-1+t^*}{q}\right)^2\\
&\overset{(c)}{=} \theta^*\expect{T[k]|H[k]} - q\left(z_k(H[k]) -1 - \phi(q)\right)^2\\
&\overset{(d)}{\leq}  \theta^* \expect{T[k]|H[k]} - q\left([z_k(H[k])-1]_{\phi_{min}}^{\phi_{max}}- \phi(q)\right)^2\\
&= \theta^* \expect{T[k]|H[k]} - q\left(\phi(\phi^{-1}([z_k(H[k])-1]_{\phi_{min}}^{\phi_{max}}))- \phi(q)\right)^2\\
&\overset{(e)}{=}\theta^* \expect{T[k]|H[k]} - q\left(\phi(G[k]) - \phi(q)\right)^2\\
&\overset{(f)}{\leq} \theta^* \expect{T[k]|H[k]} - q\beta^2(G[k]-q)^2\\
&\overset{(g)}{\leq} \theta^*\expect{T[k]|H[k]} - (1/4) \beta^2(G[k]-q)^2
\end{align*}
where (a) holds by \eqref{eq:in-A2}; (b) holds by  \eqref{eq:causal-TR}; (c) holds by definition of $\phi$ and because $t^*=\sqrt{1+q}$ (recall \eqref{eq:t-star}); (d) holds because the distance between $(z[k]-1)$ and $\phi(q)$ is greater than or equal to the distance between their projections onto the interval $[\phi_{min}, \phi_{max}]$ (and the fact that $\phi(q) \in [\phi_{min}, \phi_{max}]$); (e) holds by 
\eqref{eq:G2}; (f) holds by \eqref{eq:last-part}; (g) holds because $q \geq 1/4$.

Taking expectations of both sides with respect to the random $H[k]$ gives 
$$ \expect{R[k]} \leq \theta^*\expect{T[k]} - \frac{\beta^2}{4} \expect{(G[k]-q)^2} $$
Summing gives 
\begin{align*} 
\sum_{k=0}^{K-1} \expect{R[k]} &\leq \theta^*\sum_{k=0}^{K-1} \expect{T[k]}  - \frac{\beta^2}{4} \sum_{k=0}^{K-1} \expect{(G[k]-q)^2} \\
&\leq \theta^*\sum_{k=0}^{K-1} \expect{T[k]}   - \frac{\beta^2}{4} \sum_{k=1}^{K-1} \expect{(G[k]-q)^2} 
\end{align*}
Dividing by the expression $\sum_{k=0}^{K-1} \expect{T[k]}$ and noting that this expression is less than or equal to $2K$ (since $T[k]\leq 2$ always, see \eqref{eq:example-D}) yields
\begin{align*}
\frac{\sum_{k=0}^{K-1} \expect{R[k]}}{\sum_{k=0}^{K-1} \expect{T[k]}} &\leq \theta^* - \frac{\beta^2}{8K}\sum_{k=1}^{K-1} \expect{(G[k]-q)^2}\\
&\leq \theta^* - \Omega(\log(K)/K)
\end{align*}
where the final inequality holds by application of the Bernoulli estimation bound
\eqref{eq:bound}.
\end{proof} 

\section{Matching square root converse} \label{section:square-root} 

This section presents a square-root converse result for systems without the strongly concave structure.  We consider the example system of Section \ref{section:example} with 
task type process $\{S[k]\}_{k=0}^{\infty}$ that is i.i.d. Bernoulli with $P[S[k]=1]=p$, where $p$ is an unknown parameter. The nature of this converse is different from the one in the previous section and we shall consider only two possible values of $p$:  Fix $\epsilon$ such that $0<\epsilon\leq 1/4$. Consider the two possible hypotheses: 
\begin{itemize} 
\item Hypothesis $H_{(1/2-\epsilon)}$:  $p=1/2-\epsilon$.  Under this hypothesis it can be shown that: 
\begin{equation} \label{eq:H-neg}
\theta^*_{(1/2-\epsilon)} = 2 + 2\epsilon 
\end{equation} 
\item Hypothesis $H_{(1/2+\epsilon)}$: $p=1/2 + \epsilon$. Under this hypothesis it can be shown  that: 
\begin{equation} \label{eq:H-pos}
 \theta^*_{(1/2+\epsilon)} = \frac{6}{3 + 2\epsilon}
 \end{equation} 
\end{itemize} 
The structure of considering only two possible hypotheses that are difficult to discern is similar in spirit to the converse result of \cite{bubeck2012regret} for multi-armed bandit systems, where a square root law is also developed.  However, the square root arises for a different reason here. Indeed, the system treated in this paper has a different structure and requires a different proof. 

Fix $U$ uniform over $[0,1)$ and assume $U$ is independent of $\{S[k]\}_{k=0}^{\infty}$.  Consider a general causal algorithm that has no knowledge of $p$ and that makes decisions as follows: On each frame $k$ it chooses a conditional probability $\beta[k]$, being the conditional probability of choosing high quality given that $S[k]=1$, as some deterministic and measurable function $\hat{\beta}_k(\cdot)$ of $U$ and $S[0], \ldots, S[k-1]$: 
\begin{align}
\beta[0] &= \hat{\beta}_0(U) \label{eq:beta1} \\
\beta[k] &= \hat{\beta}_k(U, S[0], \ldots, S[k-1]) \quad \forall k \in \{1, 2, 3, \ldots\} \label{eq:beta2} 
\end{align}
For each frame $k$, given $\beta[k]$, we have 
$$ (T[k], R[k]) = \left\{\begin{array}{cc} 
(1,3) & \mbox{with prob $1-p$} \\
(2,3) & \mbox{with prob $p\beta[k]$} \\
(1, 1) & \mbox{with prob $p(1-\beta[k])$}\end{array}\right.$$
and so 
\begin{align}
\expect{R[k]|\beta[k]} &= (1-p)(3) + p(1+2\beta[k]) \nonumber \\
\expect{T[k]|\beta[k]} &= (1-p)(1) + p(1+\beta[k]) \nonumber \\
\expect{R[k] - \theta^* T[k]|\beta[k]} &= (1-p)(3) + p(1+2\beta[k]) - \theta^*(1-p + p(1+\beta[k]))\label{eq:R-theta-square-root} 
\end{align}

\begin{thm} \label{thm:square-root-converse}  Fix $\delta$ such that $0<\delta\leq 1/256$. 
Consider any  general causal algorithm of the type \eqref{eq:beta1}-\eqref{eq:beta2}.  
Fix $K \in \{1, \ldots, \lfloor \frac{3}{2^{19}\delta^2}\rfloor\}$. If for the case $p=\frac{1}{2}-64\delta$ the algorithm satisfies: 
$$ \frac{\sum_{k=0}^{K} \expect{R[k]}}{\sum_{k=0}^K\expect{T[k]}} > \theta^* - \delta$$
then the algorithm fails to achieve this for the case $p=\frac{1}{2}+64\delta$.
\end{thm}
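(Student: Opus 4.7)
The plan is to argue by contradiction via a two-hypothesis (Le Cam) indistinguishability argument. The two parameters $p_1 = 1/2 - 64\delta$ and $p_2 = 1/2 + 64\delta$ call for diametrically opposite optimal stationary actions (always low quality versus always high quality), yet their observation laws differ only by $128\delta$ in a Bernoulli parameter. I would show that if a single causal algorithm were $\delta$-optimal under \emph{both} hypotheses, the opposite empirical behaviors this forces cannot be supported by so small a total-variation gap.

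First I would translate $\delta$-optimality into an averaged constraint on $\beta[k]$. Starting from \eqref{eq:R-theta-square-root} and substituting $\theta^*$ from \eqref{eq:H-neg} and \eqref{eq:H-pos}, a direct calculation yields $\mathbb{E}[R[k]-\theta^*T[k]\mid\beta[k]] = -\epsilon(1-2\epsilon)\beta[k]$ under $p = 1/2-\epsilon$ and $-\tfrac{2\epsilon(1+2\epsilon)}{3+2\epsilon}(1-\beta[k])$ under $p = 1/2+\epsilon$. Since $T[k]\leq 2$, the $\delta$-optimality condition rearranges to $\sum_k\mathbb{E}[R[k]-\theta^*T[k]] > -2\delta K$; with the calibration $\epsilon = 64\delta$ and $\delta\leq 1/256$ (so $2\epsilon\leq 1/2$), this forces $\tfrac{1}{K}\sum_k\mathbb{E}_{p_1}[\beta[k]] < 1/16$ under $p_1$ and, if the algorithm were $\delta$-optimal under both, $\tfrac{1}{K}\sum_k\mathbb{E}_{p_2}[\beta[k]] > 121/128$ under $p_2$ as well.

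Next I would invoke Le Cam's inequality: the average $\bar\beta_K \mathrel{:=} \tfrac{1}{K}\sum_k\beta[k]$ is $[0,1]$-valued and measurable with respect to $U$ and the observation sequence, so the required gap of $113/128$ between $\mathbb{E}_{p_2}[\bar\beta_K]$ and $\mathbb{E}_{p_1}[\bar\beta_K]$ forces $\|P_{p_1}^K - P_{p_2}^K\|_{TV} > 113/128$ for the joint laws of $(U,S[0],\ldots,S[K-1])$. To upper-bound this distance I would tensorize KL-divergence, exploiting that $U$ is independent of the $S$'s and has the same marginal under both hypotheses, to get $D_{KL}(P_{p_1}^K\|P_{p_2}^K) = 2K\epsilon\log\tfrac{1+2\epsilon}{1-2\epsilon} \leq \tfrac{32}{3}K\epsilon^2$ via the bound $\log\tfrac{1+x}{1-x}\leq 2x/(1-x^2)$ applied to $x=2\epsilon\leq 1/2$; Pinsker's inequality then yields $\|P_{p_1}^K - P_{p_2}^K\|_{TV} \leq 4\epsilon\sqrt{K/3} = 256\delta\sqrt{K/3}$. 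Substituting the hypothesis $K\leq 3/(2^{19}\delta^2)$ bounds this by $2^{-3/2}\approx 0.354$, strictly less than $113/128\approx 0.883$, delivering the contradiction.

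The principal obstacle is numerical rather than conceptual: the precise factor $2^{19}$ in the stated range for $K$ must emerge from compounding three distinct constant losses --- the $\delta$-optimality-to-$\beta$-average conversion (with the awkward asymmetric factors $\epsilon(1-2\epsilon)$ and $\tfrac{2\epsilon(1+2\epsilon)}{3+2\epsilon}$), the Bernoulli-KL bound, and Pinsker's $\sqrt{\cdot/2}$ --- without the second-order $\epsilon^2$ corrections spoiling the clean constants. The calibration $\epsilon=64\delta$ with $\delta\leq 1/256$ is chosen precisely so that all such corrections can be absorbed at the end into universal constants; I would therefore keep every $\epsilon^2$ term symbolic until the final inequality and collapse the numerics only at that last step.
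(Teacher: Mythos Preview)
Your argument is correct, and the numerics check out: the exact conditional expectations $-\epsilon(1-2\epsilon)\beta[k]$ and $-\tfrac{2\epsilon(1+2\epsilon)}{3+2\epsilon}(1-\beta[k])$ are right, the conversion to the $1/16$ and $121/128$ thresholds via $\epsilon=64\delta$ and $\epsilon\leq 1/4$ is clean, and the KL--Pinsker chain lands at $2^{-3/2}<113/128$ for $K$ in the stated range. (Minor bookkeeping: the theorem's sums run over $k=0,\ldots,K$, so there are $K+1$ terms and the relevant observation vector is $(U,S[0],\ldots,S[K-1])$, i.e.\ $K$ Bernoulli factors --- your Pinsker bound already uses $K$, so this is consistent.)

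The paper takes a different technical route to the same two-hypothesis indistinguishability. Rather than bounding the continuous averages $\mathbb{E}_{p_i}[\bar\beta_K]$ and invoking Le Cam/Pinsker directly, it thresholds $\beta[k]$ at $1/2$, obtaining $\mathbb{E}_{p_1}[R[k]-\theta^*T[k]]\leq -\tfrac{\epsilon}{4}P[\beta[k]>1/2]$ and the symmetric bound under $p_2$. These indicators are then interpreted as a binary estimator $\hat A_k\in\{1/2-\epsilon,1/2+\epsilon\}$ of the unknown parameter, and a mean-absolute-error Bernoulli estimation lemma (imported from a companion paper) replaces your KL/Pinsker step. Summing gives $\sum_{k=0}^K\mathbb{E}_{p_1}[\cdot]+\sum_{k=0}^K\mathbb{E}_{p_2}[\cdot]\leq -K\epsilon/16$, so at least one of the two sums is $\leq -K\epsilon/32$, which after dividing by $\sum\mathbb{E}[T[k]]\leq 2(K+1)$ yields the gap $\epsilon/64=\delta$. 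Your approach is more self-contained (standard information-theoretic tools, no external lemma) and in fact leaves substantial slack in the final inequality ($0.354$ versus $0.883$), suggesting the constant $2^{19}$ could be sharpened; the paper's approach ties the result explicitly to a parameter-estimation lower bound, which makes the conceptual link to Bernoulli estimation more transparent.
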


The proof is developed in the next subsections by defining $\epsilon = 64\delta$ and by fixing a particular causal algorithm of the type described in this section.

\subsection{Case $H_{(1/2-\epsilon)}$} 

Fix $\epsilon = 64\delta$.  Fix a particular causal algorithm of the type \eqref{eq:beta1}-\eqref{eq:beta2}. Substituting \eqref{eq:H-neg} into \eqref{eq:R-theta-square-root} and doing the basic but tedious arithmetic gives
\begin{align*}
\mathbb{E}_{(1/2-\epsilon)}\left[R[k]-\theta^*_{(1/2-\epsilon)}T[k]|\beta[k]\right] &=  -\beta \epsilon + 2\beta \epsilon^2\\
&\overset{(a)}{\leq} -\frac{\beta[k]}{2} \epsilon\\
&\overset{(b)}{\leq}  -\frac{\epsilon}{4}1_{\{\beta[k]>1/2\}}
\end{align*}
where where $\mathbb{E}_{(1/2-\epsilon)}\left[\cdot\right]$ denotes an expectation under the assumption that the true Bernoulli parameter is $p=1/2-\epsilon$; 
 (a) holds because $0<\epsilon \leq 1/4$; (b) uses the indicator function $1_{\{\beta[k]>1/2\}}$ that is $1$ if $\beta[k]>1/2$ and $0$ else. Taking expectations and using the law of iterated expectations gives
\begin{equation} \label{eq:H-neg-case} 
\mathbb{E}_{(1/2-\epsilon)}\left[R[k] - \theta^*_{(1/2-\epsilon)}T[k]\right] \leq -\frac{\epsilon}{4}P[\beta[k]> 1/2]
\end{equation}

\subsection{Case $H_{(1/2+\epsilon)}$} 
Fix $\epsilon = 64\delta$.  Substituting \eqref{eq:H-pos} into \eqref{eq:R-theta-square-root} 
\begin{align*}
\mathbb{E}_{(1/2+\epsilon)}\left[R[k]-\theta^*_{(1/2+\epsilon)}T[k]|\beta[k]\right] &=  -(1-\beta[k]) \frac{2\epsilon + 4\epsilon^2}{3+2\epsilon}\\
&\overset{(a)}{\leq} -(1-\beta[k])\frac{\epsilon}{2}\\
&\overset{(b)}{\leq}  -\frac{\epsilon}{4}1_{\{\beta[k]\leq1/2\}}
\end{align*}
where $\mathbb{E}_{(1/2+\epsilon)}\left[\cdot\right]$ denotes an expectation under the assumption that the true Bernoulli parameter is $p=1/2+\epsilon$; (a) holds because $(2+ 4\epsilon)/(3+2\epsilon) \geq 1/2$ whenever $0<\epsilon<1/4$; (b) uses the indicator function $1_{\{\beta[k]\leq1/2\}}$ that is $1$ if $\beta[k]\leq1/2$ and $0$ else.  Taking expectations gives
\begin{equation} \label{eq:H-pos-case} 
\mathbb{E}_{(1/2+\epsilon)}\left[R[k] - \theta^*_{(1/2+\epsilon)}T[k]\right] \leq -\frac{\epsilon}{4}P[\beta[k]\leq 1/2]
\end{equation}

\subsection{Bernoulli estimation for mean absolute error} 

For each $k \in \{1, 2, 3, \ldots\}$ let $\hat{A}_k$ be Borel-measurable functions of the type: 
$$\hat{A}_k:[0, 1) \times \{0,1\}^k \rightarrow [0,1]$$
and define 
$$ A[k] = \hat{A}_k(U, S[0], S[1], \ldots, S[k-1])$$
The sequence of functions $\{A_k(\cdot)\}_{k=1}^{\infty}$ shall be called \emph{estimation functions} because they represent any way of estimating the Bernoulli parameter $p$ based only on $U$ and the first $k$ observations $S[0], \ldots, S[k-1]$. 

Let $A[k]_p$ be the resulting (random) estimation of the Bernoulli parameter given that the true parameter is $p$. Let $\mathbb{E}_p\left[|A[k]_p - p|\right]$ denote the expected mean absolute error given the true parameter is $p$. The following Bernoulli estimation lemma for mean absolute error 
is from \cite{neely-converse-NUM-arxiv} and is a modified version of a lemma for mean squared error developed in \cite{hazan-kale-stochastic}: 

\begin{lem} \label{lem:abs-error} For any sequence of Borel-measurable functions $\hat{A}_k(\cdot)$ of the structure defined above, we have 
$$ \mathbb{E}_p\left[|A[k]_p - p|\right] + \mathbb{E}_q\left[|A[k]_q - q|\right] \geq \frac{|p-q|}{4} \quad \mbox{ whenever $|p-q|\leq \frac{\sqrt{3}}{4\sqrt{2k}}$}$$
\end{lem}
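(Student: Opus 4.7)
The plan is to reduce the Bernoulli estimation lower bound to a two-point hypothesis testing lower bound (Le Cam's method) and then close the testing bound via tensorized Pinsker, mirroring the mean squared error argument of \cite{hazan-kale-stochastic} but with a Markov step tailored to absolute error instead of the Chebyshev step used there.

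\textbf{Testing reduction (Steps 1--2).} Assume without loss of generality that $p<q$, let $m=(p+q)/2$, and define the binary test $\psi := p\cdot\mathbf{1}\{A[k]\leq m\} + q\cdot\mathbf{1}\{A[k]>m\}$, which is a Borel-measurable function of $(U,S[0],\ldots,S[k-1])$. If the true parameter is $p$ but $\psi=q$, then $|A[k]_p-p|\geq (q-p)/2$, so Markov's inequality yields $\mathbb{P}_p[\psi=q]\leq \tfrac{2}{q-p}\mathbb{E}_p[|A[k]_p-p|]$, and symmetrically for the other hypothesis. Letting $P_p^k,P_q^k$ denote the joint laws of $(U,S[0],\ldots,S[k-1])$ under each parameter, Le Cam's two-point lemma (equivalently, the variational definition of total variation) gives $\mathbb{P}_p[\psi=q]+\mathbb{P}_q[\psi=p]\geq 1-d_{TV}(P_p^k,P_q^k)$. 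Combining,
\[
\mathbb{E}_p[\,|A[k]_p-p|\,] + \mathbb{E}_q[\,|A[k]_q-q|\,] \geq \frac{q-p}{2}\bigl(1-d_{TV}(P_p^k,P_q^k)\bigr).
\]

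\textbf{Closing the KL bound (Step 3).} It suffices to show $d_{TV}(P_p^k,P_q^k)\leq 1/2$ whenever $|p-q|\leq \sqrt{3}/(4\sqrt{2k})$, because substituting this into the previous display immediately produces the $|p-q|/4$ lower bound. Since $U$ is independent of the i.i.d.\ Bernoulli sequence, KL divergence tensorizes to $D_{KL}(P_p^k\|P_q^k) = k\,D_{KL}(\mathrm{Ber}(p)\|\mathrm{Ber}(q))$. The elementary single-step estimate $D_{KL}(\mathrm{Ber}(p)\|\mathrm{Ber}(q))\leq (p-q)^2/(q(1-q))$, together with $q(1-q)\geq 3/16$ on the working range $q\in[1/4,3/4]$, yields $D_{KL}(P_p^k\|P_q^k)\leq 16k(p-q)^2/3$. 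Pinsker's inequality $d_{TV}\leq\sqrt{D_{KL}/2}$ then gives $d_{TV}(P_p^k,P_q^k)\leq \sqrt{8k(p-q)^2/3}$, which is at most $1/2$ exactly when $|p-q|\leq \sqrt{3}/(4\sqrt{2k})$.

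\textbf{Main obstacle.} The conceptual content (Le Cam plus tensorized Pinsker) is standard; the real care goes into calibrating the constants so that the exact threshold $\sqrt{3}/(4\sqrt{2k})$ emerges. This requires the sharp single-step KL estimate $(p-q)^2/(q(1-q))$ and the (implicit) restriction $p,q\in[1/4,3/4]$---the same range within which Theorem \ref{thm:square-root-converse} invokes the lemma---so that $q(1-q)\geq 3/16$ can be used. Once that accounting is done, the Markov step that converts mean absolute error to testing error is straightforward, and the rest follows mechanically.
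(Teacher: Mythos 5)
Your proof is correct and is essentially the standard argument underlying the cited Lemma 6 of \cite{neely-converse-NUM-arxiv} (which the paper invokes without reproducing): the Le Cam two-point reduction with a Markov step tailored to absolute error, followed by KL tensorization, the $\chi^2$ bound $D_{KL}(\mathrm{Ber}(p)\,\|\,\mathrm{Ber}(q))\le (p-q)^2/(q(1-q))$ with $q(1-q)\ge 3/16$, and Pinsker targeting $d_{TV}\le 1/2$, reproduces the threshold $\sqrt{3}/(4\sqrt{2k})$ and the constant $1/4$ exactly. The only caveat, which you already flag, is that the lemma as stated omits the restriction $p,q\in[1/4,3/4]$ that your KL estimate needs; this is harmless here because Theorem \ref{thm:square-root-converse} only applies the lemma with $p,q=1/2\mp\epsilon$ and $\epsilon\le 1/4$.
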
 

\begin{proof} 
See Lemma 6 in \cite{neely-converse-NUM-arxiv}.
\end{proof} 

To use the above lemma, define the following estimator 
functions for each $k \in \{1, 2, 3, \ldots\}$: 
$$ \hat{A}_k(u, s_0, \ldots, s_{k-1}) = \left\{\begin{array}{cc}
1/2-\epsilon & \mbox{ if $\hat{\beta}_k(u, s_0, \ldots, s_{k-1})\leq 1/2$} \\
1/2+\epsilon & \mbox{ if $\hat{\beta}_k(u, s_0, \ldots, s_{k-1})>1/2$} 
\end{array}\right.$$
where $(u, s_0, \ldots, s_{k-1}) \in [0, 1) \times \{0,1\}^k$.  Then 
\begin{align}
\mathbb{E}_{(1/2-\epsilon)}\left[|A[k]_{1/2-\epsilon} -(1/2-\epsilon)|\right] &= 2\epsilon P[\beta[k] > 1/2] \label{eq:AA-baz} \\
\mathbb{E}_{(1/2+\epsilon)}\left[|A[k]_{1/2+\epsilon} -(1/2+\epsilon)|\right] &= 2\epsilon P[\beta[k]\leq1/2] \label{eq:BB-baz} 
\end{align}
Thus for each $k \in \{1, 2, 3, \ldots\}$ such that $2\epsilon \leq \frac{\sqrt{3}}{4\sqrt{2k}}$ we have 
\begin{align}
&\mathbb{E}_{(1/2-\epsilon)}\left[R[k] - \theta^*_{(1/2-\epsilon)}T[k]\right] + \mathbb{E}_{(1/2+\epsilon)}\left[R[k] - \theta^*_{(1/2+\epsilon)}T[k]\right] \nonumber\\
&\overset{(a)}{\leq}\frac{-\epsilon}{4}\left[P[\beta[k]>1/2] + P[\beta[k]\leq 1/2]\right]\nonumber\\
&\overset{(b)}{=}\frac{-1}{8}\left(\mathbb{E}_{(1/2-\epsilon)}\left[|A[k]_{1/2-\epsilon} -(1/2-\epsilon)|\right] + \mathbb{E}_{(1/2+\epsilon)}\left[|A[k]_{1/2+\epsilon} -(1/2+\epsilon)|\right]\right)\nonumber\\
&\overset{(c)}{\leq} \frac{-\epsilon}{16} \label{eq:Baz-sum} 
\end{align}
where (a) holds by \eqref{eq:H-neg-case}-\eqref{eq:H-pos-case}; (b) holds by \eqref{eq:AA-baz}-\eqref{eq:BB-baz}; (c) holds by appciation of Lemma \ref{lem:abs-error} for $p=1/2-\epsilon$ and $q=1/2+\epsilon$. 
The condition $2\epsilon \leq \frac{\sqrt{3}}{4\sqrt{2k}}$ is equivalent to 
$$ k \leq \frac{3}{128 \epsilon^2}$$ 
Fix $K \in \{1, 2, \ldots, \lfloor \frac{3}{128 \epsilon^2} \rfloor\}$ (this corresponds to $K$ in the interval specified by Theorem \ref{thm:square-root-converse} with $\epsilon = 64\delta$). Summing \eqref{eq:Baz-sum} over $k \in \{0, \ldots, K\}$ gives 
\begin{align*}
&\sum_{k=0}^{K} \mathbb{E}_{(1/2-\epsilon)}\left[R[k] - \theta^*_{(1/2-\epsilon)}T[k]\right] + \sum_{k=0}^{K}\mathbb{E}_{(1/2+\epsilon)}\left[R[k] - \theta^*_{(1/2+\epsilon)}T[k]\right]\\
&\overset{(a)}{\leq}\sum_{k=1}^K \frac{-\epsilon}{16} \\
&= -\frac{K \epsilon}{16}
\end{align*}
where (a) neglects the nonpositive $k=0$ term (recall Lemma \ref{lem:y-theta-zero}). 
It follows that either 
\begin{align}
\sum_{k=0}^{K} \mathbb{E}_{(1/2-\epsilon)}\left[R[k] - \theta^*_{(1/2-\epsilon)}T[k]\right]  \leq -\frac{K\epsilon}{32}  \label{eq:either1} 
\end{align}
or 
\begin{align}
 \sum_{k=0}^{K}\mathbb{E}_{(1/2+\epsilon)}\left[R[k] - \theta^*_{(1/2+\epsilon)}T[k]\right] \leq -\frac{K\epsilon}{32}\label{eq:either2} 
\end{align}
Assume \eqref{eq:either1} holds: Then assuming $p=1/2-\epsilon$ and rearranging terms in  \eqref{eq:either1} yields (using notation $\expect{\cdot}$ instead of $\mathbb{E}_{(1/2-\epsilon)}\left[\cdot\right]$ for  simplicity): 
\begin{align*}
\frac{\sum_{k=0}^{K}\expect{R[k]}}{\sum_{k=0}^{K} \expect{T[k]}} &\leq \theta^* - \frac{K\epsilon}{32\sum_{k=0}^{K}\expect{T[k]}} \\
&\overset{(a)}{\leq} \theta^* - \frac{\epsilon}{64}\\
&\overset{(b)}{=} \theta^* - \delta
\end{align*}
where (a) holds because this example has $\expect{T[k]}\leq 2$ for all $k$; (b) holds because $\epsilon = 64\delta$.  Therefore, if \eqref{eq:either1} holds, then running the algorithm in the case when the true parameter is $p=1/2-\epsilon$ means that
\begin{equation} \label{eq:holds-or-not}
 \frac{\sum_{k=0}^K \expect{R[k]}}{\sum_{k=0}^K\expect{T[k]}} \leq \theta^*-\delta
 \end{equation} 
On the other hand, if \eqref{eq:either1} fails then \eqref{eq:either2} must hold
 and by the same argument it follows that \eqref{eq:holds-or-not} is true for the case
 $p=1/2+\epsilon$.   So for any particular causal algorithm,
  \eqref{eq:holds-or-not} must hold for either the case $p=1/2-\epsilon$ or the case $p=1/2+\epsilon$.  
 This proves Theorem \ref{thm:square-root-converse}. 
 
 \subsection{Discussion} \label{section:square-root-discussion} 
 
 Recall that Theorem \ref{thm:1} shows that the proposed
 algorithm of this paper achieves an optimality gap of $O(1/\sqrt{k})$ for general
 renewal optimization systems (including those without a strongly concave structure). 
 The square root converse result of 
 Theorem \ref{thm:square-root-converse} is theoretically important because
 it matches this achievability result and demonstrates that the $O(1/\sqrt{k})$ behavior cannot generally  be improved.  From the above result, one would expect a computer simulation of this system to converge more slowly when $p \approx 1/2$.  Surprisingly, simulations show the algorithm converges  quickly even for such cases (see Section \ref{section:sim}).  This may be due to the very small coefficient $\frac{3}{2^{19}}$ obtained in Theorem \ref{thm:square-root-converse}.  One interpretation is that, while Theorem \ref{thm:square-root-converse} proves that no mathematical analysis can improve convergence for general systems beyond a square root law, the small constant coefficient  suggests that performance is not significantly degraded  for practical scenarios and  timescales.

%%%%%%%%%
\section{Probability 1 convergence} \label{section:prob1}

This section considers the general system described in Section \ref{section:structure-assumptions} (not necessarily having the strongly concave property used in Section \ref{section:strongly-concave}). Recall that $\{S[k]\}_{k=0}^{\infty}$ is i.i.d. with some distribution $F_S(s)=P[S[k]\leq s]$ for $s \in \mathbb{R}^m$. The three theorems in this section show: 
\begin{itemize} 
\item (Theorem \ref{thm:fundamental-theta}) No algorithm can have a  sample path time average that exceeds $\theta^*$. 
\item (Theorem \ref{thm:theta-to-theta-star}) The algorithm  \eqref{eq:alg1a}-\eqref{eq:theta-update} with the stepsize rule $\eta[k]=\frac{1}{(k+2)T_{min}}$ ensures $\theta[k]\rightarrow \theta^*$ with probability 1. 
\item (Theorem \ref{thm:final-sample-path}) The algorithm  \eqref{eq:alg1a}-\eqref{eq:theta-update} with stepsize rule $\eta[k]=\frac{1}{(k+2)T_{min}}$ ensures
$$ \lim_{K\rightarrow\infty} \frac{\sum_{k=0}^{K-1} R[k]}{\sum_{k=0}^{K-1} T[k]} = \theta^* \quad \mbox{(with prob 1)} $$
\end{itemize}

\subsection{Preliminary lemma} 

Consider a probability experiment with sample space $\Omega$, sigma-algebra of events $\script{F}$, and probability measure $P:\script{F}\rightarrow\mathbb{R}$. For each $i\in\{1, 2, 3, \ldots\}$ let $X_i:\Omega\rightarrow\mathbb{R}$ be a random variable. 

\begin{lem} \label{lem:chow} 
Assume the sequence of random variables $\{X_i\}_{i=1}^{\infty}$ satisfies  
\begin{equation} \label{eq:chow-summable} 
\sum_{i=1}^{\infty} \frac{\expect{X_i^2}}{i^2} <\infty 
\end{equation} 
and there are finite constants $a,b$ such that with probability 1 we have 
\begin{equation} \label{eq:liminf-bounds} 
a\leq \expect{X_i|X_0, \ldots, X_{i-1}} \leq b \quad \forall i \in \{1, 2 ,3, \ldots\} 
\end{equation} 
where $X_0$ is defined to be $0$. 
Then with probability 1 we have 
\begin{equation} \label{eq:chow-main} 
 a \leq \liminf_{k\rightarrow\infty} \frac{1}{k}\sum_{i=1}^{k} X_i \leq \limsup_{k\rightarrow\infty} \frac{1}{k}\sum_{i=1}^{k} X_i\leq b
 \end{equation} 
\end{lem}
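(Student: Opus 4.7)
My plan is to decompose $X_i$ into a martingale difference part and a conditional-mean part, show the running average of the martingale differences goes to zero almost surely, and then sandwich the average of the conditional means using the assumed uniform bounds $a$ and $b$.

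First I will introduce the natural filtration $\mathcal{F}_i = \sigma(X_0, X_1, \ldots, X_i)$ with $\mathcal{F}_0 = \sigma(X_0) = \{\emptyset, \Omega\}$. Define
\begin{equation*}
Z_i = \expect{X_i \mid \mathcal{F}_{i-1}}, \qquad Y_i = X_i - Z_i, \qquad i \in \{1,2,3,\ldots\}.
\end{equation*}
Then $Z_i$ is $\mathcal{F}_{i-1}$-measurable, and $\{Y_i\}_{i=1}^{\infty}$ is a martingale difference sequence with respect to $\{\mathcal{F}_i\}$. The conditional variance identity $\expect{Y_i^2 \mid \mathcal{F}_{i-1}} = \expect{X_i^2 \mid \mathcal{F}_{i-1}} - Z_i^2 \leq \expect{X_i^2 \mid \mathcal{F}_{i-1}}$ gives $\expect{Y_i^2} \leq \expect{X_i^2}$, so by hypothesis \eqref{eq:chow-summable} we have $\sum_{i=1}^{\infty} \expect{Y_i^2}/i^2 < \infty$.

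The key step is to apply a strong law for square-integrable martingale differences. Consider $M_n = \sum_{i=1}^{n} Y_i/i$. Since the $Y_i$ are orthogonal martingale differences,
\begin{equation*}
\expect{M_n^2} = \sum_{i=1}^{n} \frac{\expect{Y_i^2}}{i^2} \leq \sum_{i=1}^{\infty} \frac{\expect{X_i^2}}{i^2} < \infty.
\end{equation*}
Hence $\{M_n\}$ is an $L^2$-bounded martingale, so it converges almost surely (and in $L^2$) to a finite random variable by the martingale convergence theorem. Applying Kronecker's lemma to the convergent series $\sum Y_i / i$ yields
\begin{equation*}
\lim_{k\rightarrow\infty} \frac{1}{k}\sum_{i=1}^{k} Y_i = 0 \quad \mbox{with probability 1}.
\end{equation*}

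Finally, I write $\frac{1}{k}\sum_{i=1}^{k} X_i = \frac{1}{k}\sum_{i=1}^{k} Y_i + \frac{1}{k}\sum_{i=1}^{k} Z_i$. The hypothesis \eqref{eq:liminf-bounds} gives $a \leq Z_i \leq b$ with probability 1 for every $i$, so $a \leq \frac{1}{k}\sum_{i=1}^{k} Z_i \leq b$ with probability 1 for every $k$. Combined with the almost sure vanishing of the $Y$-average, this gives \eqref{eq:chow-main}. The main obstacle is the martingale-convergence step, and I would handle it by citing the standard $L^2$-bounded martingale convergence theorem together with Kronecker's lemma (this is the route taken in classical references such as Chow's martingale strong law); if a self-contained argument is preferred, the $L^2$-orthogonality of $\{Y_i\}$ makes Doob's maximal inequality applied to the partial sums of $Y_i/i$ sufficient to produce the a.s.\ Cauchy property directly.
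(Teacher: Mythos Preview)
Your proposal is correct and follows essentially the same approach as the paper: decompose $X_i$ into the martingale difference $Y_i=X_i-\expect{X_i\mid X_0,\ldots,X_{i-1}}$ plus the conditional mean, show $\sum_i \expect{Y_i^2}/i^2<\infty$, deduce $\frac{1}{k}\sum Y_i\to 0$ a.s., and sandwich the conditional-mean average between $a$ and $b$. The only differences are cosmetic: the paper bounds $\expect{Y_i^2}\le 4\expect{X_i^2}$ via $(x-y)^2\le 2x^2+2y^2$ and Jensen (whereas your conditional-variance identity gives the sharper $\expect{Y_i^2}\le \expect{X_i^2}$), and the paper cites Chow's martingale strong law directly for the vanishing of $\frac{1}{k}\sum Y_i$, while you spell out that step via the $L^2$-bounded martingale $M_n=\sum_{i\le n} Y_i/i$ and Kronecker's lemma.
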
 

\begin{proof} 
Define $Y_{0}=0$ and for each $i \in \{1, 2 ,3, \ldots\}$ define
\begin{equation} \label{eq:Y-def} 
Y_i = X_i - \expect{X_i|X_0, \ldots, X_{i-1}}
\end{equation}
By the law of iterated expectations we have 
\begin{equation} \label{eq:chow1} 
\expect{Y_i|Y_0, \ldots, Y_{i-1}} = 0 \quad \forall i \in \{0, 1, 2, \ldots\}
\end{equation} 
Further, since $(x-y)^2 \leq 2x^2 + 2y^2$ for all real numbers $x,y$, we have by \eqref{eq:Y-def} that for all positive integers $i$ 
$$ Y_i^2 \leq 2X_i^2 + 2\expect{X_i|X_0, \ldots, X_{i-1}}^2 $$
so for all positive integers $i$
\begin{align*}
\expect{Y_i^2} &\leq 2\expect{X_i^2}+ 2\expect{\expect{X_i|X_0, \ldots, X_{i-1}}^2} \\
&\overset{(a)}{\leq}2\expect{X_i^2} + 2\expect{\expect{X_i^2|X_0, \ldots, X_{i-1}}}\\
&\overset{(b)}{=} 4 \expect{X_i^2}
\end{align*}
where (a) holds by Jensen's inequality; (b) holds by the law of iterated expectations. 
Since \eqref{eq:chow-summable} holds it follows that 
\begin{equation} \label{eq:chow2} 
\sum_{i=1}^{\infty} \frac{\expect{Y_i^2}}{i^2} < \infty
\end{equation} 
By a theorem in \cite{chow-lln}, the conditions \eqref{eq:chow1} and \eqref{eq:chow2} imply
\begin{equation} \label{eq:chow-result}
 \lim_{k\rightarrow\infty} \frac{1}{k}\sum_{i=1}^k Y_i = 0 \quad \mbox{(with prob 1)}
 \end{equation}

On the other hand, by \eqref{eq:liminf-bounds} 
we have for all positive integers $k$ that with probability 1: 
$$a\leq \frac{1}{k}\sum_{i=1}^k \expect{X_i|X_0, \ldots, X_{i-1}} \leq b $$ 
Substituting the definition of $Y_i$ into the above inequality gives for all positive integers $k$ (with probability 1): 
$$ a\leq \frac{1}{k}\sum_{i=1}^k X_i - \frac{1}{k}\sum_{i=1}^k Y_i \leq b$$
Taking a $\limsup$ of the above inequality and using \eqref{eq:chow-result} gives (with probability 1): 
$$ a \leq \limsup_{k\rightarrow\infty} \frac{1}{k}\sum_{i=1}^k X_i \leq b$$
and a similar result holds when taking the $\liminf$.  This proves \eqref{eq:chow-main}. 
\end{proof}

\subsection{Sample path convergence} 

Recall that $\{S[k]\}_{k=0}^{\infty}$ are i.i.d. vectors in $\mathbb{R}^m$. 
Let $U$ be uniformly distributed over $[0,1)$ and independent of $\{S[k]\}_{k=0}^{\infty}$.  
Define 
\begin{align*}
W[0] &= 0 \\
W[k] &= (S[0], \ldots, S[k-1]) \quad \forall k \in \{1, 2, 3, \ldots\} 
\end{align*}
The variable $U$ is used as an external source of randomness to enable potentially randomized decisions.  As described in the previous section, there is no loss of generality in using a single variable $U$: From $U$ we can create an infinite sequence of i.i.d. uniformly distributed variables, all deterministic functions of $U$. 
A general \emph{causal and measurable} algorithm makes decisions on each frame $k \in \{0, 1, 2, \ldots\}$ by 
\begin{equation} \label{eq:sample-path1}
(T[k], R[k]) = f_k(U,W[k], S[k])
\end{equation} 
where for each $k$, $f_k(u,w,s)$ is a Borel-measurable function that satisfies 
\begin{equation} \label{eq:sample-path2} 
f_k(u,w,s) \in \script{D}(s) \quad \forall s \in \Omega_S
\end{equation} 

\begin{thm} \label{thm:fundamental-theta}  Under any causal and measurable algorithm we have 
$$ \limsup_{K\rightarrow\infty} \frac{\sum_{k=0}^{K-1}R[k]}{\sum_{k=0}^{K-1} T[k]} \leq \theta^* \quad (\mbox{with prob 1}) $$
where $\theta^*$ is from \eqref{eq:theta-star}. Furthermore
\begin{equation}  
T_{min} \leq \limsup_{K\rightarrow\infty}\frac{1}{K}\sum_{k=0}^{K-1}T[k]\leq T_{max} \quad \mbox{(with prob 1)} \label{eq:postpone-thm}
\end{equation} 
\end{thm}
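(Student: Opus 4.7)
The plan is to derive both conclusions as applications of Lemma \ref{lem:chow} to two specific sequences, preceded by a setup step that places the relevant conditional expectations into $\overline{\script{A}}$. Let $\script{G}_k = \sigma(U, S[0], \ldots, S[k-1])$. Because the causal rule \eqref{eq:sample-path1}--\eqref{eq:sample-path2} expresses $(T[k], R[k])$ as a deterministic function of $\script{G}_k$-measurable quantities together with $S[k]$, and because $S[k]$ is independent of $\script{G}_k$ and distributed as $S[0]$, the same averaging argument that justifies Lemma \ref{lem:tech-intro} (the general form is the Appendix result the paper cites) gives
\begin{equation*}
(\expect{T[k]\given \script{G}_k}, \expect{R[k]\given \script{G}_k}) \in \overline{\script{A}} \quad \text{a.s.}
\end{equation*}
Combined with $\overline{\script{A}} \subseteq [T_{min}, T_{max}] \times [R_{min}, R_{max}]$ and Lemma \ref{lem:y-theta-zero}, this yields $T_{min} \leq \expect{T[k]\given \script{G}_k} \leq T_{max}$ and $\expect{R[k] - \theta^* T[k] \given \script{G}_k} \leq 0$ almost surely. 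The law of iterated expectations then transfers the same bounds to $\expect{\cdot \given X_0, \ldots, X_{k-1}}$ for the sequences of interest, which is the filtration that Lemma \ref{lem:chow} requires.

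For \eqref{eq:postpone-thm}, I would apply Lemma \ref{lem:chow} with $X_k = T[k]$. The summability \eqref{eq:chow-summable} follows from $\expect{T[k]^2} \leq C_1$ in \eqref{eq:moment4}, and the two-sided bound on the conditional expectations is established above with $a = T_{min}$, $b = T_{max}$. The lemma therefore gives $T_{min} \leq \liminf_K \tfrac{1}{K}\sum_{k=0}^{K-1} T[k] \leq \limsup_K \tfrac{1}{K}\sum_{k=0}^{K-1} T[k] \leq T_{max}$ a.s., which implies \eqref{eq:postpone-thm}.

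For the main claim, apply Lemma \ref{lem:chow} with $X_k = R[k] - \theta^* T[k]$. The second moment is bounded by $2C_2 + 2(\theta^*)^2 C_1 < \infty$ (so \eqref{eq:chow-summable} holds), the conditional mean is upper bounded by $0$ from the setup step, and a trivial finite lower bound $R_{min} - \theta^* T_{max}$ holds since the conditional mean lies in a bounded subset of $\overline{\script{A}}$. Hence $\limsup_K \tfrac{1}{K}\sum_{k=0}^{K-1}(R[k] - \theta^* T[k]) \leq 0$ a.s. Writing
\begin{equation*}
\frac{\sum_{k=0}^{K-1} R[k]}{\sum_{k=0}^{K-1} T[k]} = \theta^* + \frac{\frac{1}{K}\sum_{k=0}^{K-1}(R[k] - \theta^* T[k])}{\frac{1}{K}\sum_{k=0}^{K-1} T[k]},
\end{equation*}
and using the denominator lower bound $\liminf_K \tfrac{1}{K}\sum T[k] \geq T_{min} > 0$ from the previous paragraph, the limsup of the fraction is at most $0/T_{min} = 0$, so $\limsup_K \sum R[k]/\sum T[k] \leq \theta^*$ a.s.

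The main obstacle I expect is justifying the inclusion $(\expect{T[k]\given \script{G}_k}, \expect{R[k]\given \script{G}_k}) \in \overline{\script{A}}$ for a completely arbitrary causal and measurable policy, not only for the proposed algorithm where the conditioning is on $\theta[k]$. The paper indicates this follows from a general Appendix result, and the argument is essentially the one sketched after Lemma \ref{lem:tech-intro}: $S[k]$ is independent of the past, so randomizing over $(U, W[k])$ merely produces a (possibly uncountable) convex combination of one-shot conditional distributions on $\script{D}(S[k])$, whose expectation must land in $\overline{\script{A}}$. Once this is granted, the rest is a clean Cesàro/ratio argument resting entirely on Lemma \ref{lem:chow} and Lemma \ref{lem:y-theta-zero}.
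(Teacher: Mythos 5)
Your proposal is correct and follows essentially the same route as the paper's proof: the Appendix measure-theory result places the conditional expectations in $\overline{\script{A}}$, Lemma \ref{lem:chow} is applied to $T[k]$ and to $X[k]=R[k]-\theta^*T[k]$ (with the same moment bounds and the same transfer of conditional bounds via iterated expectations), and the two conclusions are combined through the same ratio decomposition.
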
 

\begin{proof} 
Let $U$ be the uniform random variable and for each $k$ let $(T[k], R[k])$ be the
decisions from some causal and measurable algorithm 
(with structure defined by \eqref{eq:sample-path1}-\eqref{eq:sample-path2}). Fix $k \in \{0, 1, 2, \ldots\}$.  Since $S[k]$ is independent of $(U,W[k])$ we have from  
Theorem \ref{thm:measure-theory} in the Appendix: 
\begin{equation} \label{eq:in-view}
\expect{(T[k], R[k]) | U, W[k]} \in \overline{\script{A}}  \quad \mbox{(with prob 1)}
\end{equation} 
Furthermore from Theorem \ref{thm:measure-theory} and \eqref{eq:moment4}-\eqref{eq:moment5} we have 
\begin{align}
\expect{T[k]^2 | U, W[k]} &\leq C_1 \quad \mbox{(with prob 1)} \label{eq:in-view2} \\
\expect{R[k]^2|U, W[k]} &\leq C_2 \quad \mbox{(with prob 1)} \label{eq:in-view3} 
\end{align}
Define 
\begin{equation} \label{eq:Xk} 
X[k] = R[k] - \theta^*T[k] 
\end{equation} 

{\bf Claim 1:} There is a constant $V$ such that for all $k \in \{0, 1, 2, \ldots\}$ we have 
\begin{align*}
\expect{T[k]^2} &\leq V\\
\expect{R[k]^2} &\leq V\\
\expect{X[k]^2} &\leq V
\end{align*}

{\bf Proof of Claim 1:}  Fix $k \in \{0, 1, 2, \ldots\}$.  Taking expectations of \eqref{eq:in-view2} and using the law of iterated expectations gives
\begin{equation} \label{eq:C1-use} 
\expect{T[k]^2} \leq C_1
\end{equation} 
Similarly, taking expectations of \eqref{eq:in-view3} gives
\begin{equation} \label{eq:C2-use}
 \expect{R[k]^2} \leq C_2
 \end{equation} 
We have from the definition of $X[k]$: 
\begin{align*}
X[k]^2 &=(R[k]-\theta[k]T[k])^2\\
&\overset{(a)}{\leq} 2R[k]^2 + 2\theta[k]^2T[k]^2\\
&\overset{(b)}{\leq} 2R[k]^2 + 2\max\{\theta_{min}^2, \theta_{max}^2\} T[k]^2
\end{align*}
where (a) holds because $(x+y)^2\leq 2x^2 + 2y^2$ for any real numbers $x,y$; (b) holds
because $\theta[k] \in [\theta_{min}, \theta_{max}]$. Taking expectations
of the above and using \eqref{eq:C1-use}-\eqref{eq:C2-use} gives
\begin{equation} \label{eq:C3-use} 
\expect{X[k]^2} \leq 2C_2 + 2\max\{\theta_{min}^2, \theta_{max}^2\}  C_1
\end{equation} 
Defining $V$ as the maximum of the bounds in \eqref{eq:C1-use},\eqref{eq:C2-use},\eqref{eq:C3-use} proves Claim 1. 

 {\bf Claim 2:}  For all $k \in \{0, 1, 2, \ldots\}$ we have 
 \begin{align*}
& T_{min}\leq \expect{T[k] | T[0], ..., T[k-1]} \leq T_{max}  \quad \mbox{(with prob 1)}  \\
 & \expect{X[k] | X[0], \ldots, X[k-1]} \leq 0 \quad \mbox{(with prob 1)}
  \end{align*}
 
 {\bf Proof of Claim 2:}  By \eqref{eq:moment2} and Lemma \ref{lem:y-theta-zero}
 we know that every $(t,r) \in \overline{\script{A}}$ satisfies $t\geq T_{min}$ and $r/t \leq \theta^*$.  In view of \eqref{eq:in-view}, it holds that 
 \begin{align*}
 &T_{min}\leq \expect{T[k]|U, W[k]} \leq T_{max} \quad \mbox{(with prob 1)} \\
 &\expect{X[k]|U,W[k]} \leq 0 \quad \mbox{(with prob 1)} 
 \end{align*}
 Further 
 \begin{align*}
 \sigma(U, W[k]) &= \sigma(U, S[0], S[1], \ldots, S[k-1]) \\
 &\supseteq \sigma(T[0], T[1], \ldots, T[k-1]) 
 \end{align*}
 where the final inclusion holds because $(T[0], T[1], \ldots, T[k-1])$ is a deterministic
 function of $U, S[0], S[1], \ldots, S[k-1]$. Thus, for any random variable $J$ (specifically using either $J=T[k]$ or $J=X[k]$) we have 
 $$ \expect{J|T[0], \ldots, T[k-1]} = \expect{\: \underbrace{\expect{J|U, W[k]}}_{inner}\:| T[0], \ldots, T[k-1] }$$
 so that if the inner expectation satisfies a lower and/or upper bound with probability 1, so does the outer expectation.   This proves Claim 2. 
 
Claims 1 and 2 ensure that $T[k]$ and $X[k]$ satisfy the requirements of Lemma 
\ref{lem:chow}, and so 
\begin{align}
&T_{min}\leq \liminf_{K\rightarrow\infty} \frac{1}{K}\sum_{k=0}^{K-1} T[k] \leq T_{max} \quad \mbox{(with prob 1)} \label{eq:Tmin-inf} \\
&\limsup_{K\rightarrow\infty} \frac{1}{K}\sum_{k=0}^{K-1} X[k] \leq 0 \quad \mbox{(with prob 1)} \label{eq:Xinf} 
\end{align}
The inequality \eqref{eq:Tmin-inf} proves \eqref{eq:postpone-thm}. 
Substituting the definition of $X[k]$ (from \eqref{eq:Xk}) into  \eqref{eq:Xinf} yields (with prob 1)
$$ \limsup_{K\rightarrow\infty}\left[ \left(\frac{1}{K}\sum_{k=0}^{K-1} R[k]\right) - \theta^*\left(\frac{1}{K}\sum_{k=0}^{K-1} T[k]\right) \right] \leq 0$$
and from \eqref{eq:Tmin-inf} this implies the result. 
\end{proof} 

\begin{thm} \label{thm:theta-to-theta-star} Under algorithm \eqref{eq:alg1a}-\eqref{eq:theta-update} with stepsize $\eta[k]=\frac{1}{(k+2)T_{min}}$ for all $k \in \{0, 1, 2, \ldots\}$ we have 
$$ \lim_{k\rightarrow\infty} \theta[k] = \theta^* \quad \mbox{(with prob 1)} $$
where $\theta^*$ is from \eqref{eq:theta-star}.
\end{thm}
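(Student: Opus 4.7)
The plan is to promote the mean squared bound \eqref{eq:decreasing1} to an almost sure convergence statement via the Robbins-Siegmund almost-supermartingale convergence theorem. Let $\mathcal{F}_k := \sigma(U, S[0], \ldots, S[k-1])$, so that $\theta[k]$ is $\mathcal{F}_k$-measurable while $S[k]$ is independent of $\mathcal{F}_k$. Write $Y_k := (\theta[k]-\theta^*)^2$.

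First I would upgrade \eqref{eq:iteration} from a statement on full expectations to the pathwise conditional form
\[
\expect{Y_{k+1} \mid \mathcal{F}_k} \;\leq\; (1 - 2T_{min}\eta[k])\,Y_k \;+\; b'\,\eta[k]^2 \quad \text{(w.p. 1)},
\]
for some deterministic constant $b'$. The derivation is line-by-line identical to the proof of \eqref{eq:iteration}, with each full expectation replaced by an $\mathcal{F}_k$-conditional one. The ingredients needed in conditional form are (a) $(\expect{T[k] \mid \mathcal{F}_k}, \expect{R[k] \mid \mathcal{F}_k}) \in \overline{\script{A}}$ a.s., and (b) $\expect{(R[k]-\theta[k]T[k])^2 \mid \mathcal{F}_k} \leq b'$ a.s. Both follow from the Appendix theorem and the independence of $S[k]$ from $\mathcal{F}_k$; (b) additionally uses \eqref{eq:bounding2} to dominate $T[k]^2+R[k]^2$ by the integrable envelope $U(S[k])$, whose conditional expectation given $\mathcal{F}_k$ equals its unconditional value $\expect{U(S[0])}$. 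The Case 1 / Case 2 split from the proof of \eqref{eq:iteration} then delivers the pathwise analog of \eqref{eq:suffices}.

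Next I would apply the Robbins-Siegmund almost-supermartingale convergence theorem to $\{Y_k, \mathcal{F}_k\}$ with $\alpha_k=0$, $\beta_k = 2T_{min}\eta[k]\,Y_k$, and $\gamma_k = b'\eta[k]^2$. Because $\eta[k] = \frac{1}{(k+2)T_{min}}$, the sum $\sum_k \gamma_k$ is finite (deterministically), so the theorem yields (i) $Y_k$ converges almost surely to some nonnegative random variable $Y_\infty$, and (ii) $\sum_{k=0}^{\infty} \eta[k]\,Y_k < \infty$ a.s.

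Finally, I would combine (i) and (ii) with the harmonic-type divergence $\sum_k \eta[k] = \infty$ to conclude $Y_\infty = 0$ a.s. On the event $\{Y_\infty > 0\}$ we would have $Y_k \geq Y_\infty/2 > 0$ for all large $k$, forcing the weighted series in (ii) to diverge; hence $P[Y_\infty > 0] = 0$, so $\theta[k] \to \theta^*$ a.s. The only step requiring genuine care is the conditional upgrade of Lemmas \ref{lem:tech-intro} and \ref{lem1} in the first paragraph, but this is essentially automatic from the independence of $S[k]$ from the past and the Appendix result already invoked elsewhere in the paper; no new obstacle arises.
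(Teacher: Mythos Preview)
Your proposal is correct, and the argument via the Robbins--Siegmund almost-supermartingale theorem is the standard stochastic-approximation route. The paper, however, takes a different and more elementary path that avoids upgrading \eqref{eq:iteration} to conditional form. It works purely from the unconditional bound \eqref{eq:decreasing1}: Chebyshev gives $P[|\theta[k]-\theta^*|\geq\epsilon]\leq 2b/(k\epsilon^2T_{min}^2)$, so along the subsequence $k=i^2$ these probabilities are summable and Borel--Cantelli yields $\theta[i^2]\to\theta^*$ a.s. The paper then controls the fluctuation between consecutive squares by setting $G_k=\max_{i\in\{k^2,\ldots,(k+1)^2-1\}}(\theta[i]-\theta[k^2])^2$, bounding $\expect{G_k}$ directly from the update rule \eqref{eq:theta-update} (each increment is at most $\eta[j]|R[j]-\theta[j]T[j]|$, and there are $O(k)$ increments each of size $O(1/k^2)$), and applying Borel--Cantelli a second time.

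The trade-off: the paper's argument is longer and somewhat ad hoc (the subsequence-plus-fluctuation trick), but it uses nothing beyond \eqref{eq:decreasing1}, Chebyshev, and Borel--Cantelli, so no earlier lemma needs to be revisited. Your approach is shorter and conceptually cleaner once the conditional recursion is in hand, and it is the textbook method for Robbins--Monro iterations; the cost is redoing the drift computation at the $\mathcal{F}_k$-level, which (as you note) is routine given the independence of $S[k]$ from the past and the Appendix theorem. Your verification that $\expect{(R[k]-\theta[k]T[k])^2\mid\mathcal{F}_k}$ is a.s.\ bounded via the envelope $U(S[k])$ and the boundedness of $\theta[k]$ is correct, and the final step ($\sum\eta[k]=\infty$ forces $Y_\infty=0$) is standard.
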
 

\begin{proof} 
Fix $\epsilon>0$. By the Markov/Chebyshev inequality we have 
$$ P[|\theta[k]-\theta^*|\geq \epsilon] \leq \frac{\expect{(\theta[k]-\theta^*)^2}}{\epsilon^2} \leq \frac{2b}{k\epsilon^2T_{min}^2} \quad \forall k \in \{1, 2, 3, \ldots\} $$
where the final inequality holds by \eqref{eq:decreasing1}.
It follows that 
$$ \sum_{i=1}^{\infty}P[|\theta[i^2]-\theta^*| \geq  \epsilon] \leq \frac{2b}{\epsilon^2T_{min}^2}\sum_{i=1}^{\infty} \frac{1}{i^2}<\infty$$
And so the Borel-Cantelli theorem ensures that $\{|\theta[i^2]-\theta^*|\geq \epsilon\}$ happens for an at most finite number of indices $i$ with probability 1.  Since $\epsilon>0$ was arbitrary, this implies 
\begin{equation} \label{eq:over-squares} 
 \lim_{i\rightarrow\infty} \theta[i^2] = \theta^* \quad \mbox{(with prob 1)}
 \end{equation} 
Every positive integer $k$ must be between two perfect squares $n_k^2$ and $(n_k+1)^2$: 
$$ n_k^2 \leq k < (n_k+1)^2$$
Then for all positive integers $k$ we have 
$$ |\theta[k]-\theta^*| \leq |\theta[k]-\theta[n_k^2]| +  |\theta[n_k^2]-\theta^*|$$
Taking a $\limsup$ of the above inequality and using \eqref{eq:over-squares} yields 
\begin{equation*}
\limsup_{k\rightarrow\infty} |\theta[k] -\theta^*| \leq  \limsup_{k\rightarrow\infty} |\theta[k]-\theta[n_k^2]| \quad \mbox{(with prob 1)} 
\end{equation*}
It suffices to show the right-hand-side of the above inequality is $0$ with probability 1. 
For each positive integer $k$ define 
$$G_k = \max_{i \in \{k^2, \ldots, (k+1)^2-1\}} \left\{(\theta[i]-\theta[k^2])^2\right\}$$
Fix $\epsilon>0$.  It suffices to show that, with probability 1,  $\{G_k>\epsilon\}$ occurs for at most finitely many positive integers $k$.  To show this, by the Borel-Cantelli theorem it suffices to show
\begin{equation}\label{eq:summable} 
\sum_{k=1}^{\infty} P[G_k>\epsilon] < \infty
\end{equation}
For each positive integer $k$ we have by the Markov  inequality: 
\begin{align}
P[G_k>\epsilon] &\leq \frac{\expect{G_k}}{\epsilon}\nonumber \\
&=\frac{1}{\epsilon}\expect{\max_{i\in \{k^2, \ldots, (k+1)^2-1\}}\left\{(\theta[i]-\theta[k^2])^2\right\}}\label{eq:suby} 
\end{align}
Let $V$ be a finite constant that satisfies
\begin{equation} \label{eq:V-moment} 
\expect{(R[i]-\theta[i]T[i])^2} \leq V \quad \forall i \in \{0, 1, 2, \ldots\} 
\end{equation} 
Such a value $V$ exists because $\theta[i] \in [\theta_{min}, \theta_{max}]$ always, and 
second moments of $R[i]$ and $T[i]$ are uniformly bounded for all $i$.  Observe by the update procedure for $\theta[j]$ in \eqref{eq:theta-update} we have for all $j\in \{0, 1, 2, \ldots\}$ 
\begin{align} 
|\theta[j+1]-\theta[j]| &\overset{(a)}{=} \left|[\theta[j] + \eta[j](R[j]-\theta[j]T[j])]_{\theta_{min}}^{\theta_{max}} - [\theta[j]]_{\theta_{min}}^{\theta_{max}}\right| \nonumber \\
&\overset{(b)}{\leq} \left| \eta[j](R[j]-\theta[j]T[j])\right| \label{eq:projection} 
\end{align} 
where (a) uses $\theta[j] \in [\theta_{min},\theta_{max}]$; (b) uses the fact that
the distance between the projections of two numbers onto a closed interval is less than or equal to the distance between the numbers. For all positive integers $k$ and all $i \in \{k^2, \ldots, (k+1)^2-1\}$: 
\begin{align*}
(\theta[i]-\theta[k^2])^2 &=\left(\sum_{j=k^2}^{i-1} (\theta[j+1]-\theta[j])\right)^2\\
&\leq \left(\sum_{j=k^2}^{i-1} |\theta[j+1]-\theta[j]| \right)^2\\
&\overset{(a)}{\leq}\left(\sum_{j=k^2}^{i-1}\left|\eta[j](R[j]-\theta[j]T[j])\right|   \right)^2\\
&\overset{(b)}{\leq} \frac{1}{(k^2+2)^2T_{min}^2}\sum_{j=k^2}^{(k+1)^2-1} \sum_{r=k^2}^{(k+1)^2-1}|R[j]-\theta[j]T[j]|\cdot |R[r]-\theta[r]T[r]|
\end{align*}
where (a) holds by \eqref{eq:projection}; (b) holds because $\eta[j]\leq \frac{1}{(k^2+2)T_{min}}$ for all $j \geq k^2$. Taking the maximum of the above inequality over all $i \in \{k^2, \ldots, (k+1)^2-1\}$ gives
$$ \max_{i \in \{k^2, \ldots, (k+1)^2-1\}}\{(\theta[i]-\theta[k^2])^2\} \leq \frac{1}{(k^2+2)^2T_{min}^2}\sum_{j=k^2}^{(k+1)^2-1} \sum_{r=k^2}^{(k+1)^2-1}|R[j]-\theta[j]T[j]|\cdot |R[r]-\theta[r]T[r]|$$
Taking an expectation and using the Cauchy-Schwarz inequality and \eqref{eq:V-moment} gives
\begin{align*}
\expect{\max_{i \in \{k^2, \ldots, (k+1)^2-1\}}\left\{(\theta[i]-\theta[k^2])^2\right\}} \leq \frac{V\left((k+1)^2-k^2\right)^2}{(k^2+2)^2T_{min}^2} = \frac{V(2k + 1)^2}{(k^2+2)^2T_{min}^2}
\end{align*}
Substituting this into \eqref{eq:suby} gives
$$ P[G_k>\epsilon] \leq \frac{1}{\epsilon} \frac{V(2k+1)^2}{(k^2+2)^2T_{min}^2}$$
which decays like $O(1/k^2)$ and so $\sum_{k=1}^{\infty} P[G_k>\epsilon] <\infty$. 
\end{proof} 

\begin{thm} \label{thm:final-sample-path} Under algorithm \eqref{eq:alg1a}-\eqref{eq:theta-update} with stepsize $\eta[k]=\frac{1}{(k+2)T_{min}}$ for all $k \in \{0, 1, 2, \ldots\}$ we have 
$$ \lim_{K\rightarrow\infty} \frac{\sum_{k=0}^{K-1}R[k]}{\sum_{k=0}^{K-1} T[k]} = \theta^* \quad \mbox{(with prob 1)} $$
where $\theta^*$ is from \eqref{eq:theta-star}.
\end{thm}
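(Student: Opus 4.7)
The plan is to combine Theorem \ref{thm:fundamental-theta}'s universal upper bound $\limsup_K \frac{\sum R[k]}{\sum T[k]}\leq \theta^*$ (a.s.) with a matching liminf bound driven by the almost-sure convergence $\theta[k]\to\theta^*$ (Theorem \ref{thm:theta-to-theta-star}). Let $\script{F}_k=\sigma(U,S[0],\ldots,S[k-1])$, so that $\theta[k]$ is $\script{F}_k$-measurable while $S[k]$ is independent of $\script{F}_k$. Set $Z[k]=R[k]-\theta^* T[k]$; the second-moment bounds of Section \ref{section:structure-assumptions} give $\sup_k \expect{Z[k]^2}<\infty$.

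First I would sandwich $\expect{Z[k]|\script{F}_k}$ between two sequences tending to zero almost surely. The upper bound $\expect{Z[k]|\script{F}_k}\leq 0$ comes from $(\expect{T[k]|\script{F}_k},\expect{R[k]|\script{F}_k})\in\overline{\script{A}}$ a.s.\ (the same argument as Lemma \ref{lem:tech-intro}, using that $S[k]$ is independent of $\script{F}_k$) combined with Lemma \ref{lem:y-theta-zero}. For the lower bound I would rerun the proof of Lemma \ref{lem1} conditioning on $\script{F}_k$ in place of $\theta[k]$: the sample-path inequality $R[k]-\theta[k]T[k]\geq R^*[k]-\theta[k]T^*[k]$ holds surely for every alternative in $\script{D}(S[k])$, and an alternative realizing any $(t,r)\in\script{A}$ can be built from $S[k]$ together with independent randomness, making it independent of $\script{F}_k$. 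This yields
\[\expect{Z[k]|\script{F}_k}\geq (\theta[k]-\theta^*)(\expect{T[k]|\script{F}_k}-t^*) \quad \mbox{a.s.}\]
Since $\theta[k]\to\theta^*$ a.s.\ and $\expect{T[k]|\script{F}_k}\in[T_{min},T_{max}]$, the sandwich forces $\expect{Z[k]|\script{F}_k}\to 0$ a.s., and Cesaro averaging then yields $\frac{1}{K}\sum_{k=0}^{K-1}\expect{Z[k]|\script{F}_k}\to 0$ a.s.

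The technical heart is controlling the martingale-difference remainder $W[k]:=Z[k]-\expect{Z[k]|\script{F}_k}$. It is $\script{F}_{k+1}$-measurable with $\expect{W[k]|\script{F}_k}=0$ and $\expect{W[k]^2}\leq \expect{Z[k]^2}$ uniformly bounded, so $\sum_{k\geq 1}\expect{W[k]^2}/k^2<\infty$. Since $\sigma(W[0],\ldots,W[k-1])\subseteq \script{F}_k$, the tower property gives $\expect{W[k]|W[0],\ldots,W[k-1]}=0$, so Lemma \ref{lem:chow} applied with $a=b=0$ yields $\frac{1}{K}\sum_{k=0}^{K-1}W[k]\to 0$ a.s. Adding the two pieces produces $\frac{1}{K}\sum_{k=0}^{K-1}Z[k]\to 0$ a.s. Dividing by $\frac{1}{K}\sum_{k=0}^{K-1}T[k]$, which is eventually a.s.\ bounded below by $T_{min}>0$ (the liminf statement \eqref{eq:Tmin-inf} established inside the proof of Theorem \ref{thm:fundamental-theta}), gives the desired $\frac{\sum R[k]}{\sum T[k]}\to\theta^*$ a.s. The main obstacle I expect is the extension of Lemmas \ref{lem:tech-intro} and \ref{lem1} from conditioning on $\theta[k]$ alone to the richer filtration $\script{F}_k$: this requires intersecting countably many almost-sure events indexed by rational comparison points in $\script{A}$ and by frame index into a single probability-one event, mirroring the measure-theoretic bookkeeping already inside the proof of Lemma \ref{lem1}.
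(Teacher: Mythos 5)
Your proposal is correct, and it takes a genuinely different route from the paper's. The paper proves the matching lower bound at the sample-path level: for each $(t,r)\in\script{A}$ it instantiates, on the same probability space, an i.i.d.\ stationary comparison policy $(T^*[k],R^*[k])$ with mean $(t,r)$, applies the classical strong law to that comparison sequence, and controls the cross term $\frac{1}{K}\sum_{k=0}^{K-1}|\theta[k]-\theta^*|(T[k]+T^*[k])$ using $\theta[k]\to\theta^*$ and the boundedness of the time averages of both $T[k]$ and $T^*[k]$; the countable limit $(t_i,r_i)\to(t^*,r^*)$ is then taken at the level of time averages. You instead perform a Doob-type decomposition of $Z[k]=R[k]-\theta^*T[k]$ into a predictable part $\expect{Z[k]\given\script{F}_k}$, squeezed to zero frame-by-frame (taking the countable limit over comparison points inside the conditional expectation), plus a martingale-difference remainder handled by the same Chow strong law underlying Lemma \ref{lem:chow}. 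Your approach is the more systematic one and in fact yields the two-sided limit $\frac{1}{K}\sum_{k=0}^{K-1}Z[k]\to 0$ directly, so Theorem \ref{thm:fundamental-theta} is needed only for the denominator bound \eqref{eq:Tmin-inf} rather than for the $\limsup$ direction; the price is that you must restate Lemmas \ref{lem:tech-intro} and \ref{lem1} with conditioning on the full filtration $\script{F}_k$ rather than on $\sigma(\theta[k])$ alone. That extension is routine and is already implicitly available: the Appendix theorem is stated for conditioning on $(U,W)$ with $W=(S[0],\ldots,S[k-1])$, and the paper itself uses exactly this in \eqref{eq:in-view}. All the supporting details you flag check out, including $\expect{W[k]^2}\le\expect{Z[k]^2}$ via the conditional variance identity and $\expect{W[k]\given W[0],\ldots,W[k-1]}=0$ via the tower property, since each $W[j]$ with $j<k$ is $\script{F}_k$-measurable.
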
 
\begin{proof} 
We have by the rule \eqref{eq:alg1a}-\eqref{eq:alg1b}:
\begin{equation} \label{eq:prob1-foo} 
 R[k]-\theta[k]T[k] \geq R^*[k] - \theta[k]T^*[k] \quad \forall k \in \{0, 1, 2, \ldots\}
 \end{equation} 
where $(T[k], R[k]) \in \script{D}(S[k])$ is the decision made by the algorithm and $(T^*[k], R^*[k]) \in \script{D}(S[k])$ is any alternative decision.  Fix $(t,r) \in \script{A}$, so that $(t,r)$ can be achieved as an expectation of $(T^*[k], R^*[k])$ on frame $k$
under some particular decision rule. Let $(T^*[k], R^*[k]) \in \script{D}(S[k])$ be the decision that is made based purely on observing $S[k]$,  independent of the past, and that satisfies $\expect{(T^*[k], R^*[k])} = (t, r)$.  Then $\{(T^*[k], R^*[k])\}_{k=0}^{\infty}$ is a sequence of i.i.d. vectors and so by the law of large numbers: 
\begin{equation} \label{eq:LLN}
\lim_{K\rightarrow\infty} \frac{1}{K}\sum_{k=0}^{K-1} (T^*[k], R^*[k]) = (t,r) \quad \mbox{(with prob 1)} 
\end{equation} 
Rearranging terms in \eqref{eq:prob1-foo} gives the following for all $k \in \{0, 1, 2, \ldots\}$: 
\begin{align*}
R[k] - \theta^* T[k] \geq R^*[k] - \theta^* T^*[k] + (\theta[k]-\theta^*)(T[k]-T^*[k])
\end{align*}
Summing over $k \in \{0, \ldots, K-1\}$ and dividing by $K$ gives
\begin{align}
&\frac{1}{K}\sum_{k=0}^{K-1} R[k] - \theta^*\frac{1}{K}\sum_{k=0}^{K-1} T[k] \nonumber \\
&\geq \frac{1}{K}\sum_{k=0}^{K-1} R^*[k] - \theta^*\frac{1}{K}\sum_{k=0}^{K-1} T^*[k] - \frac{1}{K}\sum_{k=0}^{K-1} (\theta[k]-\theta^*)(T[k]-T^*[k]) \nonumber \\
&\geq \frac{1}{K}\sum_{k=0}^{K-1} R^*[k] - \theta^*\frac{1}{K}\sum_{k=0}^{K-1} T^*[k] - \frac{1}{K}\sum_{k=0}^{K-1}|\theta[k]-\theta^*|(T[k]+T^*[k])\label{eq:temp} 
\end{align}

{\bf Claim:} We have with probability 1 we have:
\begin{equation}
\limsup_{K\rightarrow\infty} \frac{1}{K}\sum_{k=0}^{K-1} |\theta[k]-\theta^*|(T[k]+T^*[k]) =0  \label{eq:postpone} 
\end{equation}

We postpone the proof of \eqref{eq:postpone}.  Taking a $\liminf$ of \eqref{eq:temp} and 
substituting \eqref{eq:postpone} and \eqref{eq:LLN} gives, with probability 1: 
$$\liminf_{K\rightarrow\infty} \left[   \frac{1}{K}\sum_{k=0}^{K-1} R[k] - \theta^*\frac{1}{K}\sum_{k=0}^{K-1} T[k]\right] \geq r - \theta^* t  \quad \mbox{(with prob 1)} $$
This holds for all $(t,r) \in \script{A}$.  Taking a limit over a countably infinite sequence of points $(t_i, r_i) \in \script{A}$ that approach the point $(t^*,r^*)\in \overline{\script{A}}$ and using the fact that $\theta^*=r^*/t^*$ gives
\begin{equation} \label{eq:dude} 
\liminf_{K\rightarrow\infty} \left[   \frac{1}{K}\sum_{k=0}^{K-1} R[k] - \theta^*\frac{1}{K}\sum_{k=0}^{K-1} T[k]\right] \geq 0 \quad \mbox{(with prob 1)} 
\end{equation} 
This, together with \eqref{eq:postpone-thm}, proves that with probability 1: 
$$ \liminf_{K\rightarrow\infty} \frac{\sum_{k=0}^{K-1} R[k]}{\sum_{k=0}^{K-1} T[k]} \geq \theta^*$$
On the other hand, Theorem \ref{thm:fundamental-theta} ensures the $\limsup$ is less than or equal to $\theta^*$ with probability 1.  Thus, the limit is exactly $\theta^*$ (with prob 1). 

It remains to prove \eqref{eq:postpone} of the Claim.  We know $\theta[k]\rightarrow\theta^*$ with probability 1. Fix $\epsilon>0$. With probability 1 we know  $|\theta[k]-\theta^*|\leq \epsilon$ for all sufficiently large $k$ and so 
$$\limsup_{K\rightarrow\infty} \frac{1}{K}\sum_{k=0}^{K-1}|\theta[k]-\theta^*|(T[k]+T^*[k]) \leq \epsilon \limsup_{K\rightarrow\infty} \frac{1}{K}\sum_{k=0}^{K-1}(T[k]+T^*[k])  \quad \mbox{(with prob 1)} $$
Observe that both  $T[k]$ and $T^*[k]$ are from causal and measurable algorithms and so they both satisfy \eqref{eq:postpone-thm}. Thus
$$\limsup_{K\rightarrow\infty} \frac{1}{K}\sum_{k=0}^{K-1}|\theta[k]-\theta^*|(T[k]+T^*[k]) \leq \epsilon 2T_{max}  \quad \mbox{(with prob 1)} $$
This holds for all $\epsilon>0$ and so \eqref{eq:postpone} follows.
\end{proof} 

%%%%%%%%
\section{Simulation} \label{section:sim} 

This section presents simulations of the proposed algorithm under the initial condition $\theta[0]=\theta_{min}$ and stepsize $\eta[k] = \frac{1}{(k+2)T_{min}}$. 

\subsection{System 1: Selecting one of multiple projects}

Consider the project selection problem of Section \ref{section:selection}. 
On each frame $k$ we receive $N[k]$ new potential projects, where $N[k] \in \{0, 1, 2, 3\}$  with $P[N[k]=i]=p_i$ and 
$$ p_0=0.1, p_1 = 0.9-p, p_2 = p/2, p_3 = p/2$$
where $p \in [0, 0.9]$ is a parameter varied in the simulations (larger values of $p$ yield stochastically more projects).  The decision set for
each frame $k$ is 
$$ \script{D}(S[k]) = \{(1,0), (T_1, R_1), \ldots, (T_{N[k]}, R_{N[k]})\}$$ 
where the $(1,0)$ option corresponds to working on no project for 1 time unit (and receiving no profit). Given that $N[k]=i$, the vectors $(T_j,R_j)$ for $j \in \{1, \ldots, i\}$ are generated independently with $T_j \sim Uniform([1,10])$ and $R_j=A_jT_j$ where $A_j \sim Unif([0,  50])$.  
The proposed algorithm uses $[\theta_{min}, \theta_{max}] = [0,50]$ and $T_{min}=1$. 

Fig. \ref{fig:SIM1a} compares
the proposed algorithm to the greedy strategy that chooses the project 
$j$ that maximizes the ratio $R_j/T_j$. The proposed algorithm has significant gains. 
Fig. \ref{fig:SIM1b} explores convergence of the time average for 
one sample path of the proposed
algorithm over $2000$ frames for various parameter values $p$. It is difficult to calculate $\theta^*$ analytically so the dashed horizontal lines in Fig. \ref{fig:SIM1b} are estimated values of $\theta^*$ obtained from 5000 independent runs. It is interesting to note that, considering only frames for which $N[k]\geq 1$, the proposed algorithm chooses to reject all offered projects a significant fraction of time: For parameters 
$p \in \{0, 0.3, 0.6, 0.9\}$ the conditional rejection  probabilities (given $N[k]\geq 1$)  
were  $0.46, 0.40, 0.33, 0.25$, respectively. 

Fig. \ref{fig:SIM1heuristic} compares the proposed algorithm to the $\theta$-empirical 
heuristic algorithm of \cite{renewal-opt-tac} (see Section VI.b of \cite{renewal-opt-tac}). 
The $\theta$-empirical heuristic has a similar structure that maximizes $R[k]-\theta[k]T[k]$ over all $(T[k], R[k])$ choices, but sets $\theta[k]$ to the empirical time average reward seen up to frame $k$. In 
\cite{renewal-opt-tac} and \cite{sno-text} it is shown that \emph{if this algorithm converges} then it converges to the optimal $\theta^*$, but no proof of convergence and no convergence time results are known. 
From simulation it yields very similar results, and even (slightly) faster convergence, in comparison to the proposed algorithm (compare the dashed curves to the solid curves of the same color in Fig. \ref{fig:SIM1heuristic}).  The advantage of the proposed algorithm is that it comes with a proof of convergence along with convergence time guarantees.

\begin{figure}[t]
   \centering
   \includegraphics[width=3in]{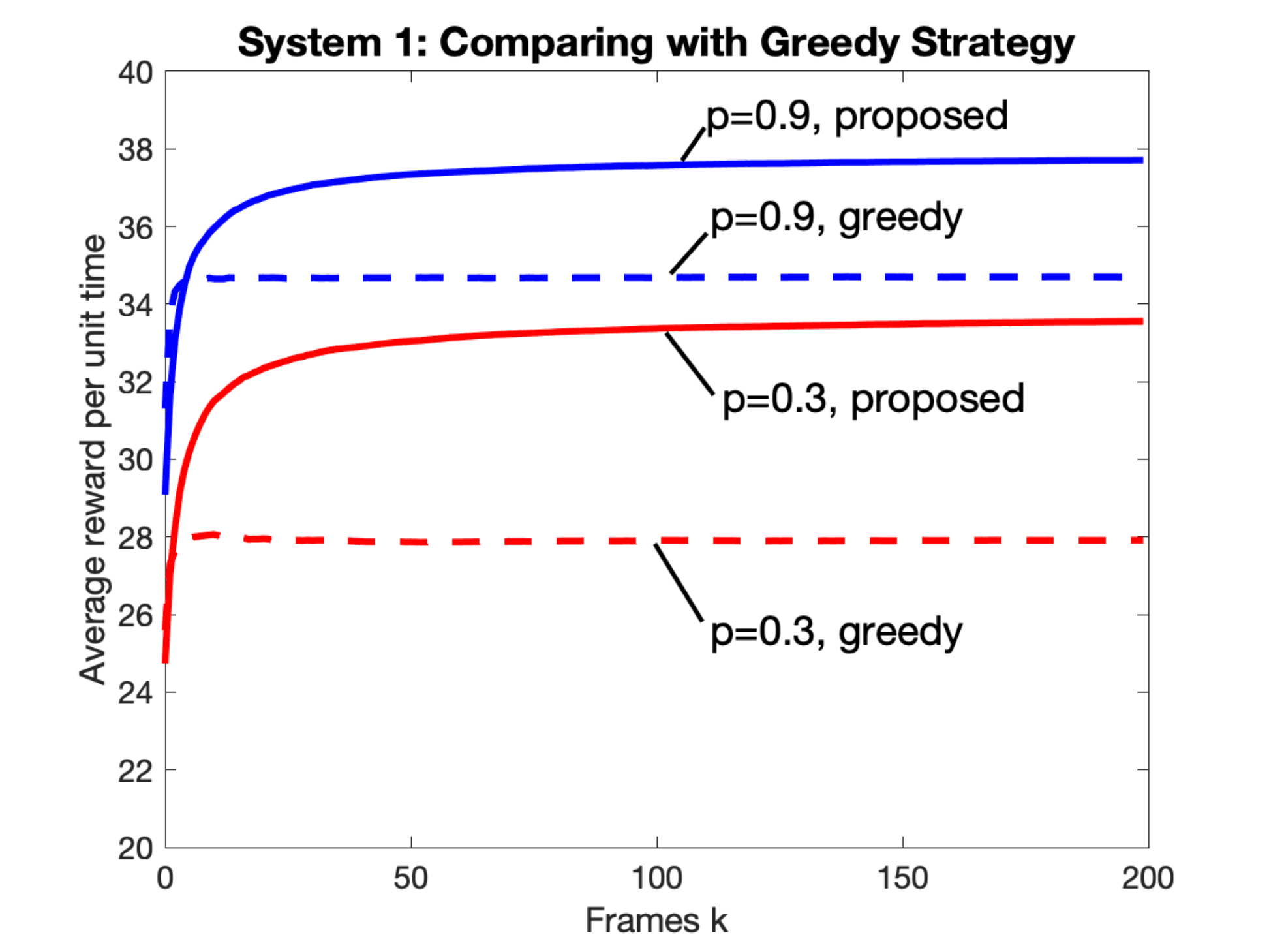} % requires the graphicx package
   \caption{Comparing the proposed algorithm with the greedy strategy for two different values of $p$. Data is averaged over 5000 independent sample paths.}
   \label{fig:SIM1a}
\end{figure}

\begin{figure}[h]
   \centering
   \includegraphics[width=3in]{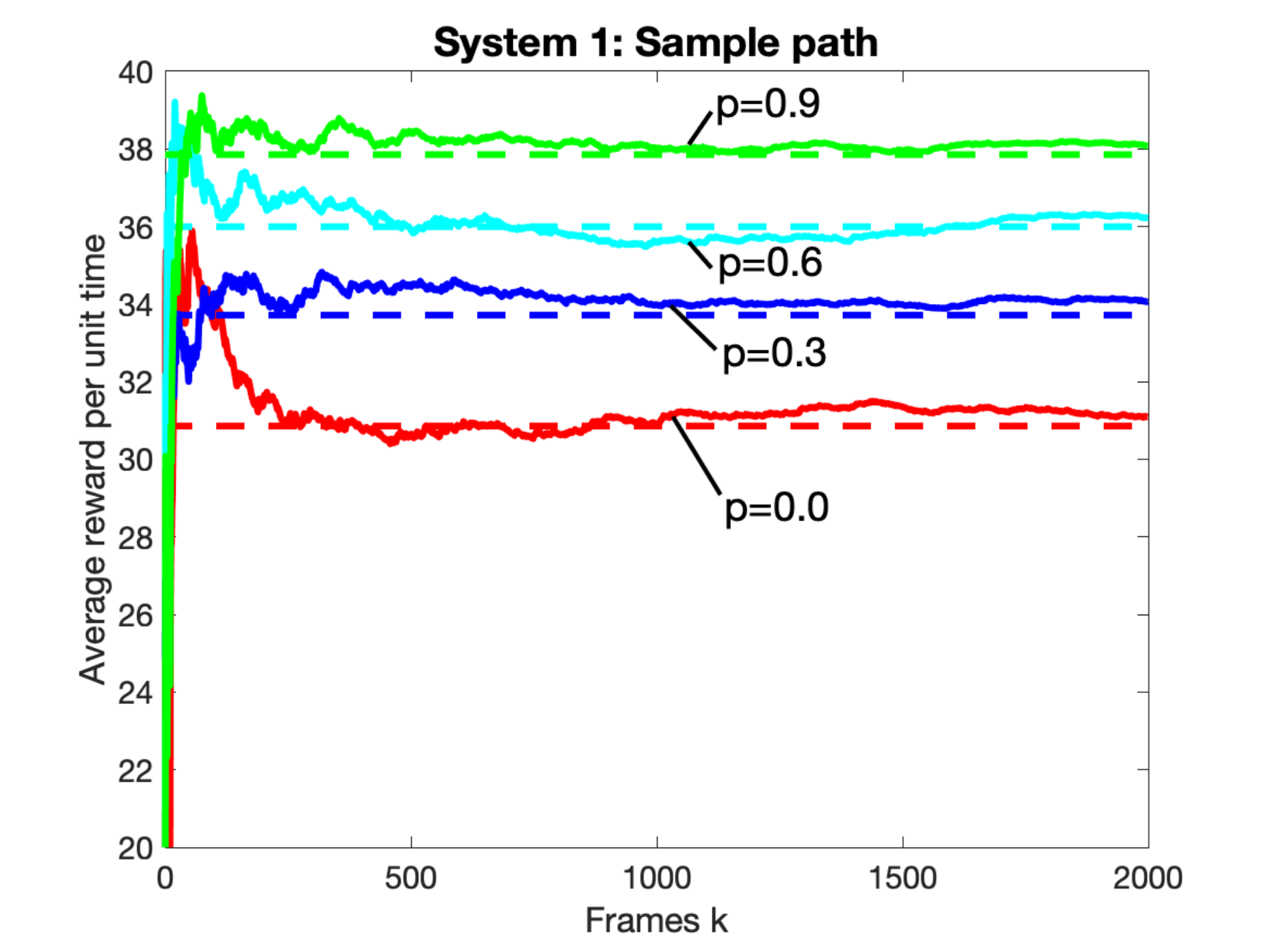} % requires the graphicx package
   \caption{Simulation of a single sample path for System 2: $\sum_{i=0}^{k-1} R[i]/\sum_{i=0}^{k-1}T[i]$ versus $k \in \{0, \ldots, 2000\}$ for four different values of $p$. Dashed horizontal lines are obtained by averaging the final value at time $2000$ over 5000 independent sample paths.}
   \label{fig:SIM1b}
\end{figure}

\begin{figure}[h]
   \centering
   \includegraphics[width=3in]{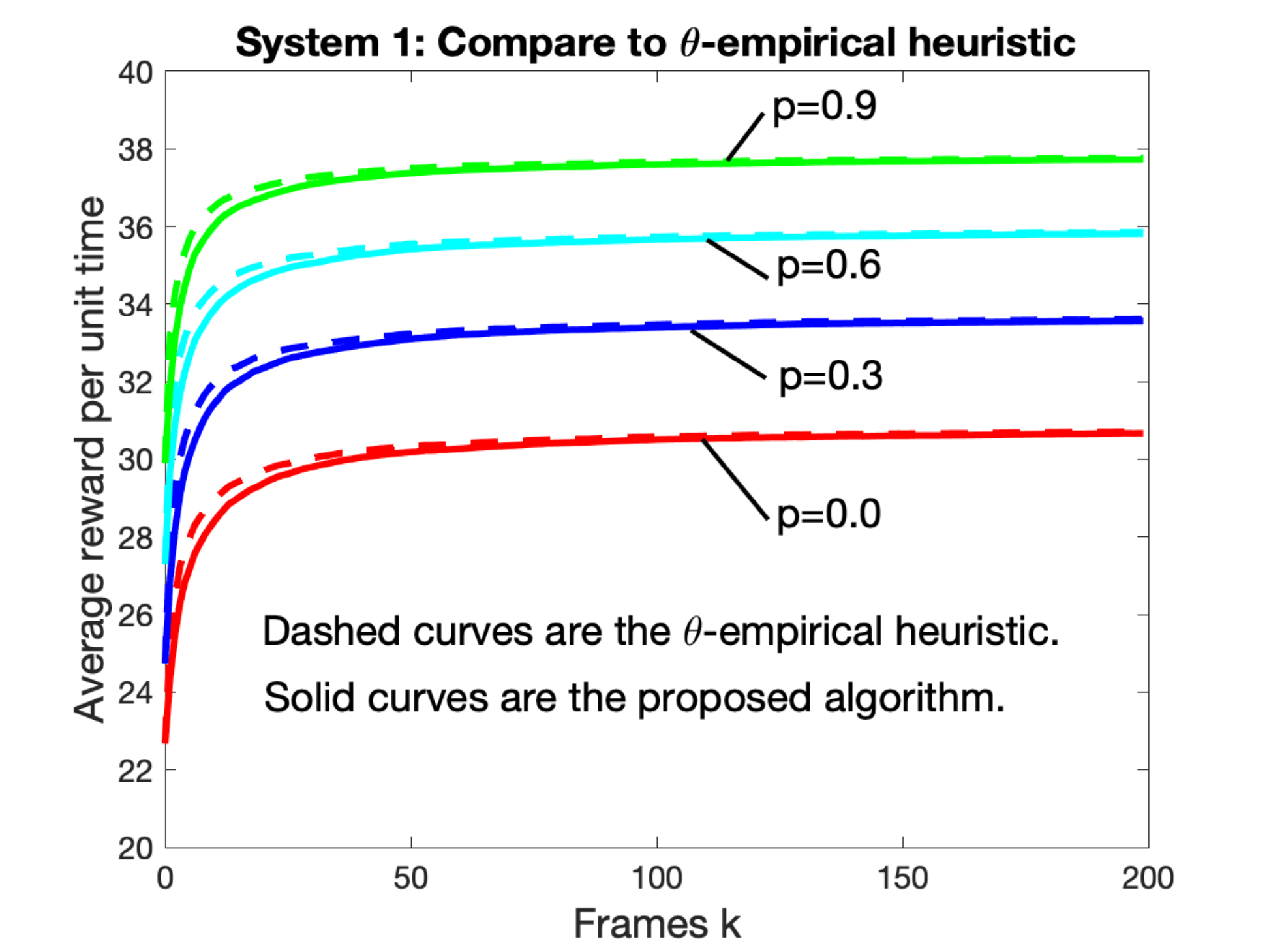} % requires the graphicx package
   \caption{Comparing the proposed algorithm to the $\theta$-empirical heuristic of \cite{renewal-opt-tac}.  The simulation averages over 5000 independent sample paths and plots $\mathbb{E}[\sum_{i=0}^{k-1} R[i]/\sum_{i=0}^{k-1}T[i]]$ versus $k \in \{0, \ldots, 200\}$ for four different values of $p$.}
   \label{fig:SIM1heuristic}
\end{figure}

\subsection{System 2: A curve of choices} 

Now consider the system described in Section \ref{section:system-converse}, where $\theta^*$ is analytically known.  The proposed algorithm uses $[\theta_{min}, \theta_{max}] = [1,2]$ and $T_{min}=1$. If $S[k]=1$ the algorithm chooses from a curve of choices:
$$ (T[k], R[k]) = (x[k], 2-(2-x[k])^2) $$
to maximize $T[k] - \theta[k]R[k]$ subject to $1\leq x[k]\leq 2$, which has 
solution  $x[k] = \left[2 - \frac{\theta[k]}{2}\right]_{1}^{2}$. 
Results for four different parameter values of $p=P[S[k]=1]$  
are in Figs. \ref{fig:SIM2a} and \ref{fig:SIM2b}.  The dashed horizontal lines are the analytically optimal $\theta^*$ values in \eqref{eq:example-theta}.  
Fig. \ref{fig:SIM2a} shows how close one sample path time average comes to $\theta^*$ after 1000 frames.  When the simulation time is extended it was observed that all four sample paths settled into near-constant values that were indistinguishable from $\theta^*$, which is consistent with the probability 1 sample path convergence proven in Section \ref{section:prob1}.  Fig. \ref{fig:SIM2b} shows the expected performance (with expectations computed by 
averaging over 5000 independent sample paths).  Fig. \ref{fig:SIM2b} plots $\expect{\sum_{i=0}^{k-1}R[i]/\sum_{i=0}^{k-1}T[i]}$. It was observed that plots of  $\sum_{i=0}^{k-1}\expect{R[i]}/\sum_{i=0}^{k-1}\expect{T[i]}$ over the same number of frames looked similar (those plots are omitted for brevity). 

\begin{figure}[htbp]
   \centering
   \includegraphics[width=3in]{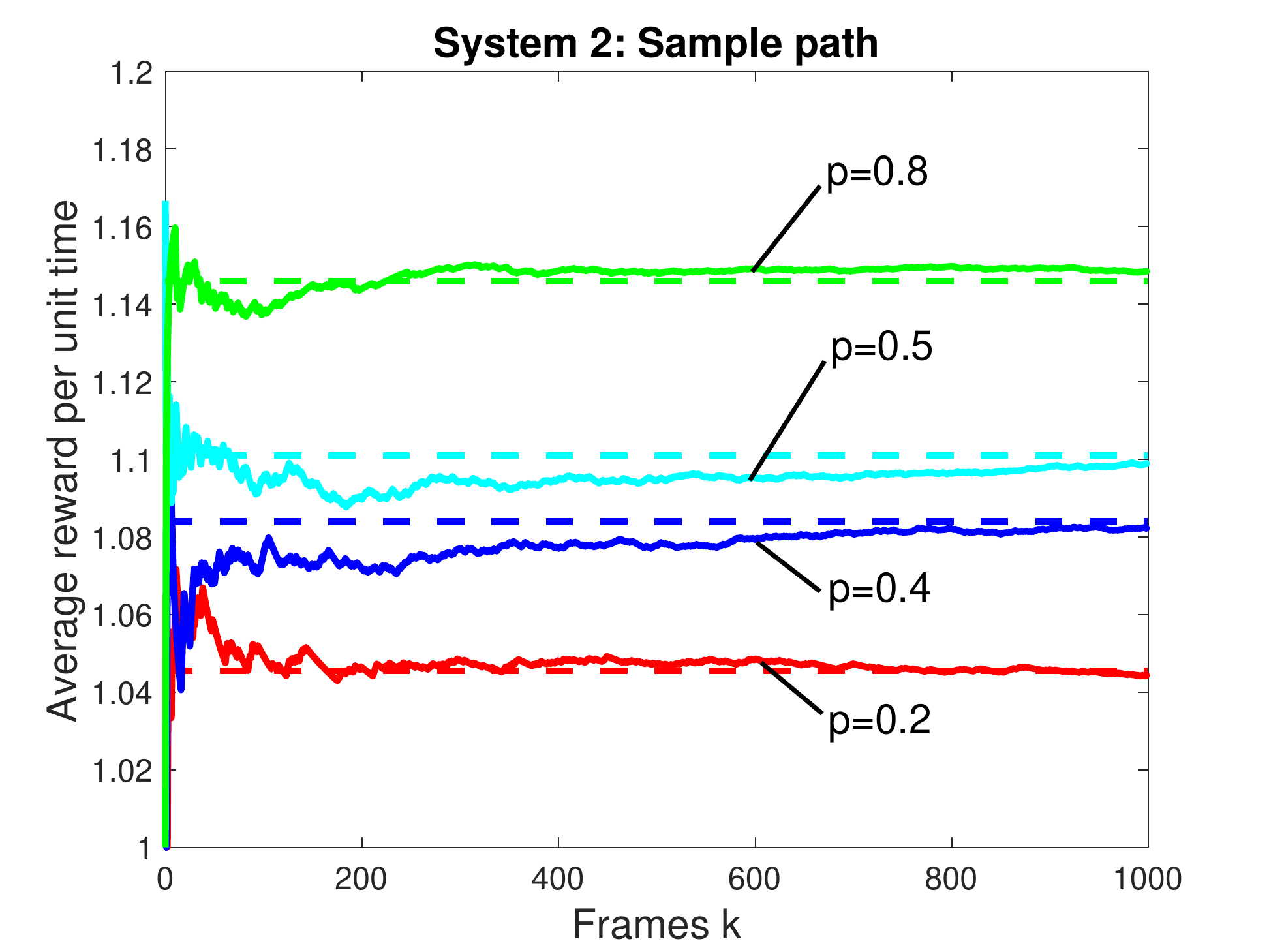} % requires the graphicx package
   \caption{Simulation of a single sample path for System 2: $\sum_{i=0}^{k-1} R[i]/\sum_{i=0}^{k-1}T[i]$ versus $k \in \{0, \ldots, 1000\}$ for four different Bernoulli probabilities $p$. Dashed horizontal lines are the optimal values.}
   \label{fig:SIM2a}
\end{figure}

\begin{figure}[htbp]
   \centering
   \includegraphics[width=3in]{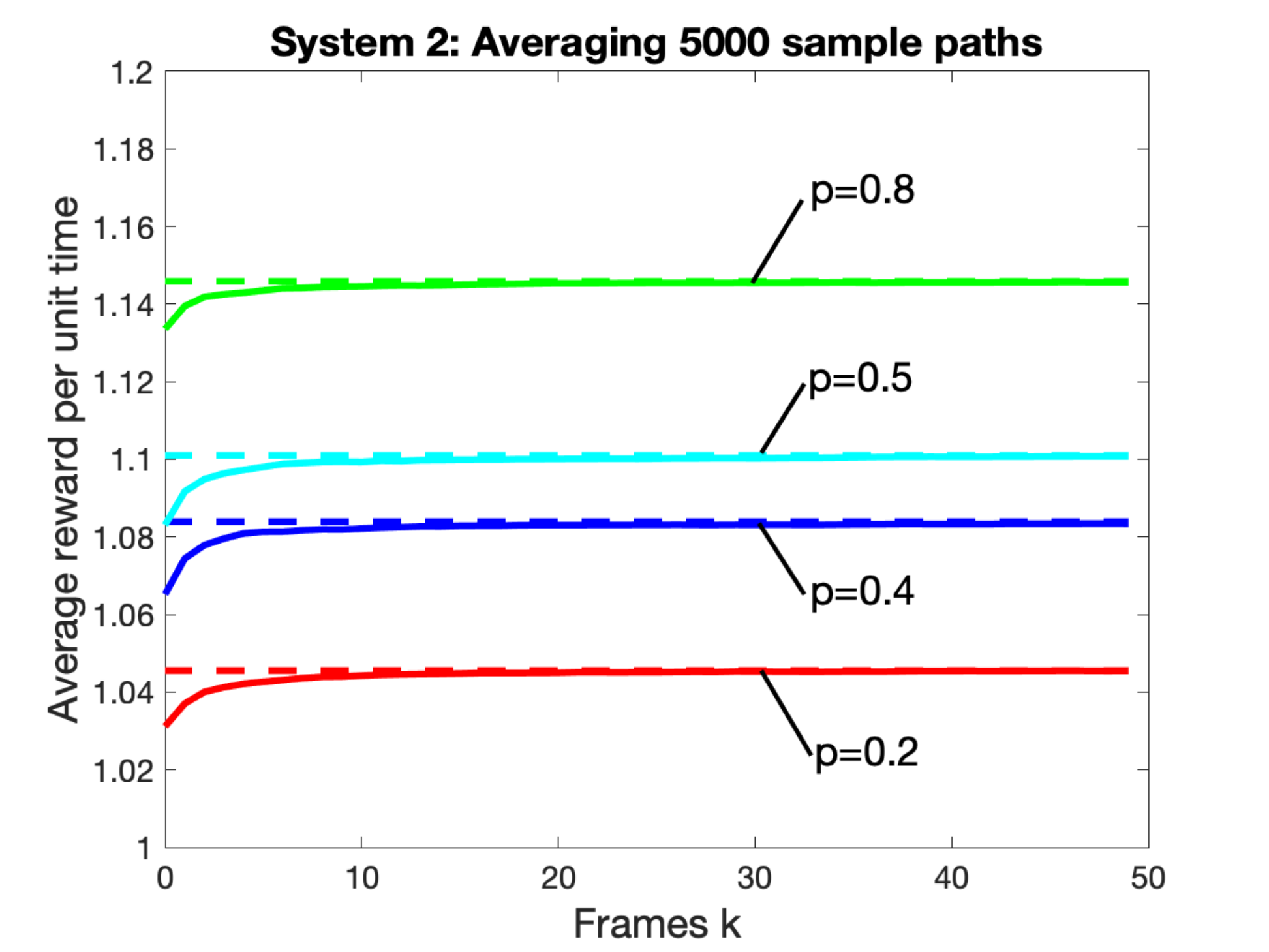} % requires the graphicx package
   \caption{Simulation averaging over 5000 sample paths for System 2: $\expect{\sum_{i=0}^{k-1} R[i]/\sum_{i=0}^{k-1}T[i]}$ versus $k \in \{0, \ldots, 50\}$ for four different Bernoulli probabilities $p$. Dashed horizontal lines are the optimal values.}
   \label{fig:SIM2b}
\end{figure}

%\begin{figure}[htbp]
  % \centering
  % \includegraphics[width=3in]{figs/system3b} % requires the graphicx package
 %  \caption{Simulation averaging over 5000 sample paths for System 3: $\expect{\sum_{i=0}^{k-1} R[i]/\sum_{i=0}^{k-1}T[i]}$ versus $k \in \{0, \ldots, 200\}$ for four different values of $p$.}
%   \label{fig:system3b}
%\end{figure}

\subsection{System 3: Two choices} 

Consider the system of Section \ref{section:example}, which is the same system for which a square-root converse result  was proven in Section \ref{section:square-root}.  When $S[k]=1$ there are only two choices: $(T[k], R[k]) \in \{(1,1), (2, 3)\}$. 
We use $[\theta_{min}, \theta_{max}] = [1, 3]$, $T_{min}=1$.  Fig. \ref{fig:SIM3a} plots data from a single sample path run over $1000$ frames 
for four different values of $p=P[S[k]=1]$.  The dashed horizontal lines are the exact $\theta^*$ values computed analytically in Section \ref{section:square-root}. Fig. \ref{fig:SIM3b} plots smoother curves that are averaged over $5000$ independent sample paths. The curves in Fig. \ref{fig:SIM3b} are plotted over the smaller timeline $k \in \{0, \ldots, 50\}$ to show convergence of the expected value. It can be seen that the algorithm converges quickly to optimality for all $p$ choices. There was no significant  behavioral difference observed when $p\approx 0.5$, even though the $p=0.5$  threshold played a crucial role in the square root converse result.\footnote{When one stares at  Fig. \ref{fig:SIM3b} long enough, one might become convinced of a \emph{very slight convergence time increase} for the green and black curves, representing data when $p\approx 0.5$, in comparison to the curves when $p$ is far from $0.5$. However, this difference is minor.  The author expected to see a bigger behavioral difference about $p=0.5$, but that did not occur. See discussion in Section \ref{section:square-root-discussion}.} 
Indeed, simulations were considered with 
$p=0.5 \pm \delta$ for various small $\delta$ values including $\delta=0$ (those curves fell in between the $p=0.49$ and $p=0.51$ curves of Fig. \ref{fig:SIM3b} but the data is omitted for clarity of the plots and for brevity). Our hypothesis about why the $p=0.5$ threshold was not more noticeable in the simulations is discussed
in Section \ref{section:square-root-discussion}.

\begin{figure}[htbp]
   \centering
   \includegraphics[width=3in]{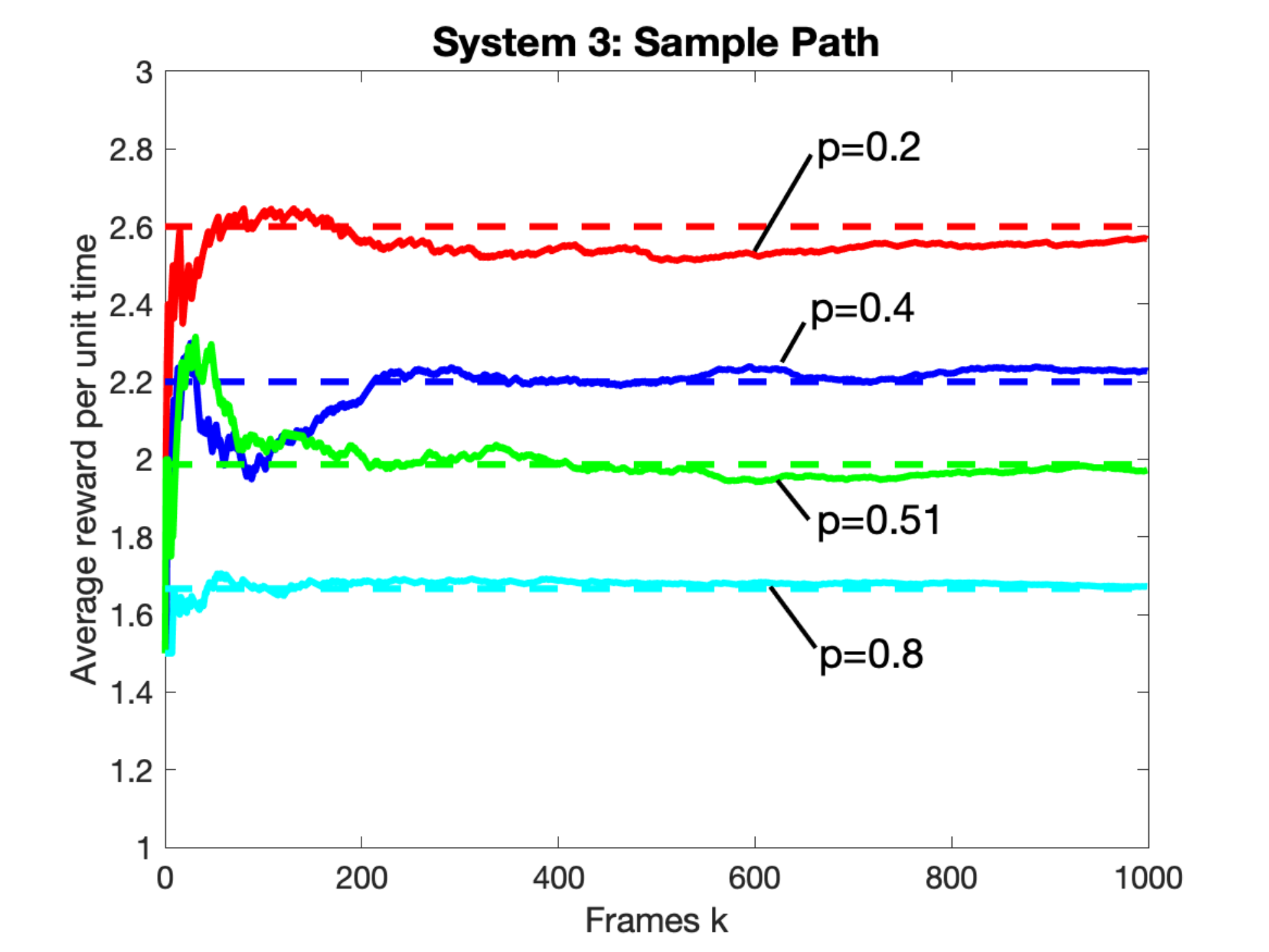} % requires the graphicx package
   \caption{Simulation of a single sample path for System 3: $\sum_{i=0}^{k-1} R[i]/\sum_{i=0}^{k-1}T[i]$ versus $k \in \{0, \ldots, 1000\}$ for four different Bernoulli probabilities $p$. Dashed horizontal lines are the optimal values.}
   \label{fig:SIM3a}
\end{figure}

\begin{figure}[htbp]
   \centering
   \includegraphics[width=3in]{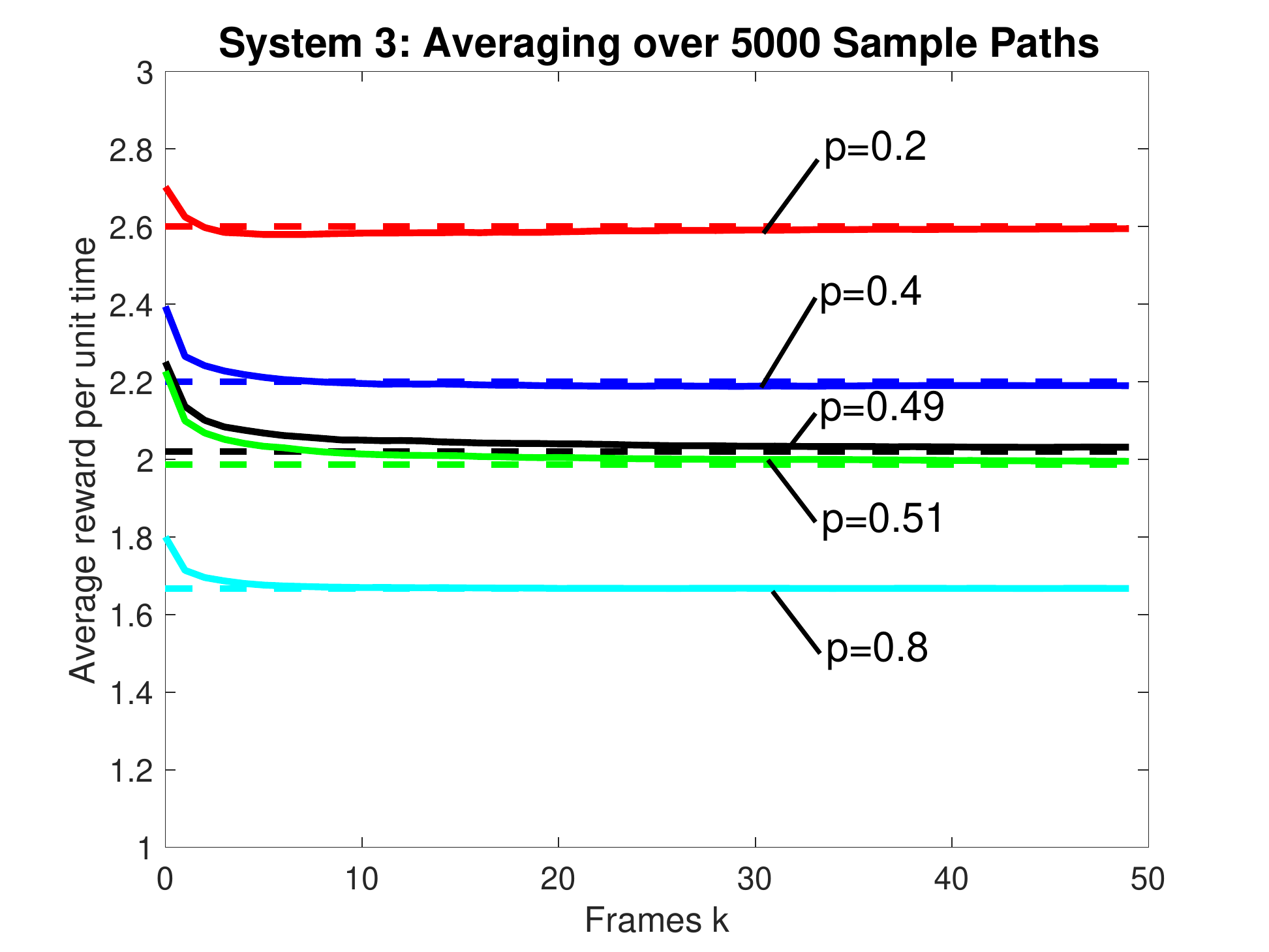} % requires the graphicx package
   \caption{Simulation averaging over $5000$ sample paths for System 3: $\expect{\sum_{i=0}^{k-1} R[i]/\sum_{i=0}^{k-1}T[i]}$ versus $k \in \{0, \ldots, 50\}$ for five different Bernoulli probabilities $p$. Dashed horizontal lines are the optimal values.}
   \label{fig:SIM3b}
\end{figure}

\section{Conclusion} 

This paper develops an online algorithm for making decisions in a renewal system. The algorithm
is shown to  converge 
to the optimal time average reward with the fastest possible asymptotic convergence time. 
The algorithm adjusts an auxiliary variable according to a 
Robbins-Monro iteration.  It also makes online decisions on each frame that are informed
by the current value of this variable.  When the system has a strongly concave structure the algorithm
is shown to have an optimality gap of $O(\log(k)/k)$. A matching converse result shows this
gap is the best possible in the strongly concave scenario.  In general conditions (without strong concavity)
the algorithm was shown to have an optimality gap of $O(1/\sqrt{k})$ and a matching
converse was also demonstrated.  The convergence results are presented in terms of expectations achieved by the algorithm. The algorithm was also shown to  have sample paths that converge to optimality with probability 1.

%%%%%%%%%
%%%%%%%%%
\section*{Appendix}

Consider a probability experiment with sample space $\Omega$, sigma-algebra of events $\script{F}$, and
probability measure $P:\script{F}\rightarrow\mathbb{R}$.  Fix $m, v$ as positive integers and consider random vectors 
\begin{align*}
&U: \Omega \rightarrow [0, 1)\\
&W: \Omega \rightarrow \mathbb{R}^v\\
&S: \Omega \rightarrow \mathbb{R}^m
\end{align*}
Assume that 
\begin{itemize} 
\item $U$ is uniformly distributed over $[0,1)$. 
\item $S$ is independent of $(U,W)$. 
\end{itemize} 
Define $\Omega_S$ and $\Omega_W$ by 
\begin{align*}
\Omega_S &= \{S(\omega) : \omega \in \Omega\}\\
\Omega_W &= \{W(\omega) : \omega \in \Omega\} 
\end{align*}
For each $s \in \Omega_S$ let $D(s)\subseteq\mathbb{R}^n$ be a general set.  We consider random vectors $X:\Omega\rightarrow\mathbb{R}^n$ of the form
$$ X=f(U,W,S)$$
where $f$ is a function that has the following properties, called \emph{$f$-requirements}:
\begin{itemize} 
\item $f:\mathbb{R}\times \mathbb{R}^v\times \mathbb{R}^m \rightarrow\mathbb{R}^n$ is Borel-measurable. 
\item $f(u,w,s) \in D(s)$ for all $u \in [0,1)$, $w \in \Omega_W$, $s \in \Omega_S$.
\end{itemize} 

Intuitively, we want to observe $S$ and choose a random vector $X \in D(S)$, where the vector $X$ is informed by observations of $(U,W)$.   The random vector $W$ 
represents any other parameters of the system that can be observed that are distinct from $S$. The random variable $U$ represents a source of external randomness that can be used to choose   $X$.

It is assumed throughout that there exists at least one function $f$  that satisfies the $f$-requirements. We want any random vector $X$ that is constructed this way to  
have a finite and bounded expectation.  For this, we additionally 
assume there is a nonnegative function $M:\Omega_S\rightarrow[0, \infty)$ such that 
\begin{equation} \label{eq:dominating} 
 \left(\sup_{x \in D(s)} \norm{x}\right) \leq M(s) \quad \forall s \in \Omega_S
 \end{equation} 
where $\norm{x}=\sqrt{\sum_{i=1}^n x_i^2}$, 
and such that $M(S)$ is a random variable
with finite expectation $\expect{M(S)}$.
 Let $A \subseteq \mathbb{R}^n$ be the set of all expectations $\expect{X}=(\expect{X_1}, \ldots, \expect{X_n})$ that can be achieved in this way, considering all possible functions $f$ that satisfy the $f$-requirements.    Let $\overline{A}$ be the closure of $A$. It can be shown that
 $\overline{A}$ is nonempty, compact, and convex. 

\begin{thm} \label{thm:measure-theory}  Let $X=f(U,W,S)$ were $f$ satisfies the $f$-requirements. Let $Z$ be a random variable that is a version of  $\expect{X|U, W}$. Then 
\begin{equation} \label{eq:suffices-as} 
 P\left[\:Z \in \overline{A}\: \right] = 1
 \end{equation} 
Further, there exists a random variable $\tilde{Z}$ that is surely in the set $\overline{A}$
and that is also a version of $\expect{X|U, W}$. 
\end{thm}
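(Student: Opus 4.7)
The plan is to exhibit an explicit Borel-measurable version of $\expect{X|U,W}$ that takes values in $\overline{A}$ (in fact, in $A$) at every point, and then invoke the fact that any two versions of a conditional expectation agree almost surely.

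First I would introduce
$$g(u,w) := \expect{f(u,w,S)}, \quad (u,w) \in [0,1) \times \Omega_W.$$
For each such $(u,w)$ the expectation is well defined and finite coordinate-wise because $\norm{f(u,w,S)} \leq M(S)$ by \eqref{eq:dominating} and $\expect{M(S)}<\infty$. Borel measurability of $g$ in $(u,w)$ follows from a standard Fubini/Tonelli argument: integrating a Borel-measurable integrand against the fixed probability law of $S$ produces a Borel-measurable function of the parameter. (After a standard null-set modification we may also assume $U(\omega) \in [0,1)$ for every $\omega$, so that $g(U,W)$ is defined at every $\omega$.)

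Second, I would verify that $\tilde{Z} := g(U,W)$ is a version of $\expect{X|U,W}$. The key input is that $S$ is independent of $(U,W)$: for any bounded Borel $\phi:\mathbb{R} \times \mathbb{R}^v \to \mathbb{R}$, Fubini gives
$$\expect{\phi(U,W)\,f(U,W,S)} = \int \phi(u,w)\,\expect{f(u,w,S)}\,dP_{U,W}(u,w) = \expect{\phi(U,W)\,g(U,W)},$$
which is exactly the defining property characterizing $\tilde{Z}$ as a version of $\expect{X|U,W}$.

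The crucial third step is to show $g(u_0,w_0)\in A$ for every deterministic $(u_0,w_0) \in [0,1) \times \Omega_W$. Given such a point, define the \emph{frozen} selector $\tilde f(u,w,s) := f(u_0,w_0,s)$, a function that is constant in $(u,w)$ and Borel in $s$, hence Borel on $\mathbb{R} \times \mathbb{R}^v \times \mathbb{R}^m$. It satisfies $\tilde f(u,w,s) = f(u_0,w_0,s) \in D(s)$ for all $s \in \Omega_S$, so it obeys the $f$-requirements, and
$$\expect{\tilde f(U,W,S)} = \expect{f(u_0,w_0,S)} = g(u_0,w_0),$$
witnessing $g(u_0,w_0) \in A$. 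Consequently $\tilde Z(\omega) = g(U(\omega),W(\omega)) \in A \subseteq \overline{A}$ for every $\omega$, proving the second assertion. The first assertion follows because any other version $Z$ of $\expect{X|U,W}$ satisfies $P[Z = \tilde Z]=1$, so $P[Z \in \overline{A}] = P[\tilde Z \in \overline{A}] = 1$.

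The main obstacle is largely bookkeeping rather than conceptual depth: one must carefully justify that $g$ is Borel-measurable and that $g(U,W)$ qualifies as a version of $\expect{X|U,W}$, both of which reduce to the independence of $S$ from $(U,W)$ and a routine Fubini argument controlled by the integrable dominator $M(S)$. Once this machinery is in place, the essential idea---that freezing $(U,W)$ at any deterministic value yields an admissible function $\tilde f$ whose expectation equals $g$ at that point---is a one-line observation that places every value of $g$ back into $A$.
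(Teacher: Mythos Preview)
Your argument is correct but takes a genuinely different route from the paper's own proof. The paper argues by contradiction: assuming $P[Z\in\overline{A}]<1$, it uses that the compact convex set $\overline{A}$ is a \emph{countable} intersection of half-spaces to locate a separating hyperplane $\{a^{\transpose}x\le b\}$ and an event $B=\{a^{\transpose}Z>b\}$ of positive probability; it then exploits the uniformity of $U$ to manufacture a Borel map $(\alpha,\beta)$ so that $(\alpha(U),\beta(U))$ has the conditional law of $(U,W)$ given $B$, and checks that $\tilde f(u,w,s)=f(\alpha(u),\beta(u),s)$ is admissible with $\expect{\tilde f(U,W,S)}=\expect{X|B}$, contradicting $a^{\transpose}\expect{X|B}>b$.

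Your construction is more direct and more elementary. By writing the conditional expectation explicitly as $g(u,w)=\expect{f(u,w,S)}$ via independence and Fubini, and then freezing $(u,w)$ at any deterministic $(u_0,w_0)$ to obtain an admissible $\tilde f$, you place every value of $g$ into $A$ itself (not merely $\overline{A}$). This sidesteps both the constructibility lemma and the distribution-simulation step, and in fact never uses that $U$ is uniform---only that $S$ is independent of $(U,W)$. The paper's approach may be more robust in settings where an explicit Fubini representation of $\expect{X|U,W}$ is unavailable, but in the present framework your argument is shorter and yields the slightly sharper conclusion $\tilde Z\in A$ surely.
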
 

\begin{proof} 
Let $Z$ be a version of $\expect{X|U,W}$.  Then 
\begin{equation} \label{eq:inclusions-start} 
 \sigma(Z) \subseteq \sigma((U,W)) 
 \end{equation} 
where $\sigma(Z)$ is the sigma-algebra generated by the random vector $Z$, and $\sigma((U,W))$ is the sigma-algebra generated by the random vector $(U,W)$.
The set $\overline{A}$ is closed, hence Borel-measurable, 
so  $P[Z \in \overline{A}]$ is well defined.  Suppose   \eqref{eq:suffices-as} fails, so that 
\begin{equation}\label{eq:less-than-1} 
P[Z \in \overline{A}]<1
\end{equation} 
We shall reach a contradiction. Since $\overline{A}$ is compact and convex it is \emph{constructible}, meaning
it is the \emph{countable} intersection 
of closed half-spaces (see Proposition 7.5.6 in 
\cite{constructible-book}):
\begin{equation} \label{eq:countable-intersection}
 \overline{A} = \cap_{j=1}^{\infty} \{x \in \mathbb{R}^n : a_j^{\transpose}x \leq b_j\}
 \end{equation} 
for some $a_j \in \mathbb{R}^k$ and $b_j \in \mathbb{R}$ for all $j \in \{1, 2, 3, \ldots\}$.  Thus
$$ P[Z \in \overline{A}] = P[\cap_{j=1}^{\infty} \{a_j^{\transpose}Z\leq b_j\}]$$
and so from \eqref{eq:less-than-1}: 
\begin{equation}\label{eq:exists-half-space}
P[\cap_{j=1}^{\infty} \{a_j^{\transpose}Z\leq b_j\}] < 1
\end{equation} 
The above inequality implies it is \emph{impossible} to have $P[a_j^{\transpose} Z \leq b_j]=1$ for all $j \in \{1, 2, 3, \ldots\}$ (since
the intersection of a countably infinite number of probability-1 events is again a probability-1 event). So there 
is at least one positive integer $k$ such that 
\begin{equation} \label{eq:j-star} 
P[a_{k}^{\transpose} Z\leq b_{k}]<1
\end{equation} 
For simplicity of notation define $a=a_{k}$ and $b=b_{k}$. With this notation \eqref{eq:j-star} becomes 
$P[a^{\transpose}Z \leq b]<1$ and so 
\begin{equation} \label{eq:v} 
P[a^{\transpose}Z > b]>0
\end{equation} 
Define $B\subseteq \Omega$ as the following event: 
\begin{equation} \label{eq:B-event} 
 B = \{a^{\transpose} Z> b\}
 \end{equation} 
By construction of $B$ we have  
$$ B \in \sigma(Z) \subseteq \sigma((U, W))$$
where the final inclusion holds by \eqref{eq:inclusions-start}. 
By  \eqref{eq:v} we know 
\begin{equation} \label{eq:Bgreater} 
P[B] > 0
\end{equation}  
Since $Z$ is a version of $\expect{X|U, W}$, we know that $a^{\transpose}Z$ is a version of $\expect{a^{\transpose} X |U, W}$. Since $B \in \sigma((U, W))$ we have by the defining property of a conditional expectation of $a^{\transpose}X$ given $(U,W)$: 
\begin{equation} \label{eq:pre-claim} 
\expect{ a^{\transpose} X 1_{B}}=\expect{a^{\transpose} Z 1_B} 
\end{equation} 
where $1_{B}$ is an indicator function for event $B$. 

{\bf Claim:} $\expect{a^T Z 1_B} > bP[B]$. 

{\bf Proof of Claim:} By definition of event $B$ in \eqref{eq:B-event} we have  
$$ \{ (a^T Z - b)1_{B}>0\} \iff B   $$
and so 
$$P[(a^T Z - b)1_{B}>0]=P[B] >0$$
where the final  inequality holds by \eqref{eq:Bgreater}.  Since the nonnegative
random variable $(a^{\transpose}Z-b) 1_{B}$ has a nonzero probability of being  
positive, it must have positive expectation. This proves the claim. 

Substituting the result of the claim into \eqref{eq:pre-claim} gives:
\begin{align*}
\expect{a^{\transpose}X 1_{B}}>bP[B] 
\end{align*}
Dividing by the positive number $P[B]$  gives
\begin{equation*} 
a^{\transpose} \left(\frac{\expect{X 1_{B}}}{P[B]}\right) > b 
\end{equation*} 
Since $P[B]>0$, the above inequality is equivalent to 
\begin{equation} \label{eq:contra}
 a^{\transpose} \expect{X|B} > b
 \end{equation} 

We now construct another random variable $\tilde{X}=\tilde{f}(U,W,S)$ 
for some other function $\tilde{f}$ that meets the $f$-requirements.  For any such $\tilde{X}$ we have by definition of the set $A$:
$$ \expect{\tilde{X}} \in A \subseteq \overline{A} \subseteq \{x \in \mathbb{R}^n : a^{\transpose} x \leq b\} $$
where the final inclusion is from \eqref{eq:countable-intersection}, 
and this directly implies 
\begin{equation} \label{eq:for-contradiction} 
a^{\transpose} \expect{\tilde{X}} \leq b
\end{equation} 
We can reach a contradiction between \eqref{eq:for-contradiction} and \eqref{eq:contra}   if the expectation of $\expect{\tilde{X}}$ can be chosen
 to match $\expect{X|B}$. 
 
 This can be done as follows: Since $P[B]>0$ we can define 
 $$F_{U, W|B}(u, w) = P[U \leq u, W\leq w| B] \quad \forall  (u,w) \in \mathbb{R}\times \mathbb{R}^v$$
 where vector 
 inequality $W \leq w$ is taken entrywise.   Map the 
uniformly distributed variable $U$  through a deterministic and  Borel-measurable function $\beta:\mathbb{R}\rightarrow \mathbb{R}^{v+1}$ to obtain a random vector 
$$(\alpha(U), \beta(U)) = (\alpha(U), \beta_1(U), \ldots, \beta_v(U))$$ 
that has distribution $F_{U,W|B}(u, w)$ for all $u \in \mathbb{R}$
and $w \in \mathbb{R}^v$. Define the function $\tilde{f}$ by 
$$ \tilde{f}(u, w,s) = f(\alpha(u), \beta(u), s) \quad \forall u \in \mathbb{R}, w \in \mathbb{R}^v, s \in \mathbb{R}^m$$
Since $f$ is Borel-measurable and $\beta$ is Borel-measurable, we know $\tilde{f}$ is Borel-measurable. Further, for all $(u,w, s) \in [0,1)\times \Omega_W\times \Omega_S$ we have 
$$ \tilde{f}(u,w,s) = f(\alpha(u),\beta(u), s) \in D(s) $$
where the final inclusion holds because $f$ satisfies the second $f$-requirement.  Thus, $\tilde{f}$ satisfies both $f$-requirements. 

The random variable $\tilde{X}$ is defined by 
\begin{equation} \label{eq:X-tilde} 
\tilde{X} = \tilde{f}(U,W, S) =  f(\alpha(U), \beta(U), S) 
\end{equation} 
Since $S$ is independent of $(U, W)$ and hence independent of $(\alpha(U), \beta(U))$, 
the random vector $(\alpha(U),\beta(U), S)$ has the same distribution as 
the conditional distribution of $(U, W, S)$ given that event  $B$ occurs. Thus  
$$\expect{f(\alpha(U), \beta(U), S)} = \expect{f(U, W, S) | B}$$
Multiplying both sides of the above equality by $a^{\transpose}$ and substituting 
$\tilde{X}=f(\alpha(U), \beta(U), S)$ gives 
$$ a^{\transpose}\expect{\tilde{X}} = a^{\transpose} \expect{f(U, W, S) | B} > b$$
where the final inequality holds by \eqref{eq:contra}.
This contradicts  \eqref{eq:for-contradiction}. This completes the proof of  $P[Z \in \overline{A}] = 1$. 

Finally, to construct another version of $\expect{X|U,W}$ that is \emph{surely} in the set $\overline{A}$ (rather than just \emph{almost surely}), 
fix any vector $a \in \overline{A}$ and define $\tilde{Z}$ by 
$$ \tilde{Z} = \left\{\begin{array}{cc}
Z & \mbox{ if $Z \in \overline{A}$}\\
a & \mbox { else} 
\end{array}\right.$$
Then  $\tilde{Z} \in \overline{A}$ surely, and $\sigma(\tilde{Z}) \subseteq\sigma(Z) \subseteq\sigma((U,W))$.  But $\tilde{Z}$ differs from $Z$ only on a set of measure zero, and so  
$\tilde{Z}$ is also a version of $\expect{X|U,W}$.
\end{proof} 

Lemma \ref{lem:tech-intro} from Section \ref{section:tech-intro}  is a special case of 
Theorem  \ref{thm:measure-theory}: Fix  $k \in \{0, 1, 2, \ldots\}$, define $S=S[k]$, $W=\theta[k]$, $(X_1, X_2) = (T[k], R[k])$.  The algorithm makes decisions for $(X_1,X_2)$ as a function of $W$ and $S$ (without using a source of randomness $U$).

% ------------------------------------------------------------------------
%GATHER{Xbib.bib}   % For Gather Purpose Only
%GATHER{Paper.bbl}  % For Gather Purpose Only
\bibliographystyle{unsrt}
\bibliography{../../../../../../latex-mit/bibliography/refs}
\end{document}